	\newtheorem{thm}{Theorem}[section]
	\newtheorem{lem}[thm]{Lemma}
	\newtheorem{prop}[thm]{Proposition}
	\newtheorem{cor}[thm]{Corollary}
	\newtheorem{conj}[thm]{Conjecture}
	\newtheorem*{totalconj}{Total Coloring Conjecture'}
\newtheorem{thmB1}{Theorem}
\newtheorem{thmB2}[thmB1]{Theorem}
	\newtheorem*{conj1}{Conjecture 1}
	\newtheorem{rem}[thm]{Remark}
	\newcommand{\Gmn}{G^{\frac{m}{n}}}
	\newcommand{\Got}{G^{\frac{1}{2}}}
	\newcommand{\Gon}{G^{\frac{1}{n}}}
	\newcommand{\N}{\mathbb{N}}
\begin{document}
	
	\title{On coloring of fractional powers of graphs}
	\author{
	Stephen Hartke
	\thanks{Department of Mathematics,
	University of Nebraska Lincoln, Lincoln, NE 68588-0130, USA {\tt hartke@math.unl.edu}.}
	\and
	Hong Liu\thanks{Department of Mathematics,
	University of Illinois at Urbana-Champaign, Urbana, Illinois 61801, USA {\tt hliu36@illinois.edu}.}
	\and
	\v{S}\'{a}rka Pet\v{r}\'{i}\v{c}kov\'{a}
	\thanks{Department of Mathematics,
	University of West Bohemia, Plzen 30614, CZ {\tt sarpet@kma.zcu.cz}.}
	}
	\maketitle
	
	\begin{abstract}
	For $m, n\in \N$, the fractional power $\Gmn$ of a graph $G$ is the $m$th power of the $n$-subdivision of $G$, where the $n$-subdivision is obtained by replacing each edge in $G$ with a path of length $n$. It was conjectured by Iradmusa that if $G$ is a connected graph with $\Delta(G)\ge 3$ and $1<m<n$, then $\chi(\Gmn)=\omega(\Gmn)$. Here we show that the conjecture does not hold in full generality by presenting a graph $H$ for which $\chi(H^{\frac{3}{5}})>\omega(H^{\frac{3}{5}})$. However, we prove that the conjecture is true if $m$ is even. We also study the case when $m$ is odd, obtaining a general upper bound $\chi(\Gmn)\leq \omega(\Gmn)+2$ for graphs with $\Delta(G)\geq 4$.	
	\end{abstract}
	
	\textbf{Keywords}: fractional power, chromatic number, regular graphs.
	
	\textbf{Math. Subj. Class.}: 05C15
	\section{Introduction}\label{S_Introduction}
	Let $G$ be a simple finite graph, and let $m$ and $n$ be positive integers. The \emph{$n$-subdivision} of $G$, denoted by $G^{\frac{1}{n}}$,
	is the graph formed from $G$ by replacing each edge with a path of length $n$. The \emph{$m$-power} of $G$, denoted by $G^m$, is the graph constructed from
	$G$ by joining every two distinct vertices with distance at most $m$. The \emph{fractional power} $\Gmn$ is then defined to be the $m$-power of the $n$-subdivision of $G$;
	that is, $\Gmn=(G^{\frac{1}{n}})^m$. Here we study the relation of $\chi(\Gmn)$ and $\omega(\Gmn)$.

	A \emph{total coloring} of $G$ is a coloring of its vertices and edges such that no adjacent vertices, no adjacent edges, and no incident edge and vertex have the same color.
	 The \emph{total chromatic number} $\chi''(G)$ of $G$ is the least number of colors in such a coloring. The famous Total Coloring Conjecture, formulated independently by Behzad~\cite{B} and Vizing~\cite{V}, states that $\chi''(G)\le\Delta(G)+2$ for any simple graph $G$. Since $\chi''(G)=\chi(G^{\frac{2}{2}})$, and $\omega(G^{\frac{2}{2}})=\Delta(G)+1$ for every graph
	$G$ with $\Delta(G)\ge 2$, we can rewrite the Total Coloring Conjecture as follows:
	
	\begin{totalconj}
	[\cite{B,V}] If $G$ is a simple graph with maximum degree at least 2, then $\chi(G^{\frac{2}{2}})\le\omega(G^{\frac{2}{2}})+1$.
	\end{totalconj}
	
	If $m<n$, then maximum cliques of $G$ are somewhat separated, so less colors may be needed, as conjectured by Iradumusa~\cite{I}.
	\begin{conj1}[\cite{I}]\label{conj1}
	 If $G$ is a connected graph with $\Delta(G)\ge 3$ and $1<m<n$, then $\chi(\Gmn)=\omega(\Gmn)$.
	\end{conj1}
	Iradmusa showed that Conjecture~\ref{conj1} is true for $m=2$. We refer the reader to \cite{I} for the proof and several other results for $m>2$.
	
	The purpose of this paper is to further investigate Conjecture~\ref{conj1}. Our first results is that Conjecture~\ref{conj1} is true if $m$ is even.
\begin{thmB1}\label{thmB1}
	 If $G$ is a graph with $\Delta(G)\geq 3$ and $1<m<n$ with $m$ even, then $\chi(\Gmn)=\omega(\Gmn)$.
	\end{thmB1}
We also study the conjecture when $m$ is odd. We show that the conjecture does not hold in full generality. In particular, it is not true for the cartesian product $C_3\square K_2$  of $C_3$ and $K_2$ (triangular prism), $m=3$, and $n=5$. However, we give the following general bound.
\begin{thmB2}\label{thmB2}
	 If $G$ is a graph with $\Delta(G)\geq 4$ and $1<m<n$ with $m$ odd, then $\chi(\Gmn)\leq\omega(\Gmn)+2$.
	\end{thmB2}

	 The paper is organized as follows. In Section~\ref{S_Notation}, we introduce notation and some known results that are used later in the paper.
The proof of Theorem~1 is given in Section~\ref{S_Even} ($\Delta(G)\geq 4$) and Section~\ref{S_Cubic} ($\Delta(G)=3$).
In Section~\ref{S_Odd} we prove Theorem~2 for graphs that are not complete, and we show that  Conjecture~\ref{conj1} holds for infinitely many $n$ (about ``half'' of the values) if $m$ is odd.
Section~\ref{S_Complete} deals with complete graphs, where it is shown Conjecture~\ref{conj1} is true for all complete graphs. Finally, Section~\ref{S_dynamic} discusses the connection between $r$-dynamic coloring and Conjecture~\ref{conj1}.
		
We make the following conjecture.
	\begin{conj}\label{ourconj}
	Conjecture~\ref{conj1} holds except when $G=C_3\square K_2$.
	\end{conj}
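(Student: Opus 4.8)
By Theorem~1, equality $\chi(\Gmn)=\omega(\Gmn)$ already holds for every connected $G$ with $\Delta(G)\ge 3$ whenever $m$ is even, with no exceptions; so for Conjecture~2 it remains only to treat odd $m$, and it suffices to prove that for odd $m$ with $1<m<n$, every connected $G$ with $\Delta(G)\ge 3$ other than $C_3\square K_2$ satisfies $\chi(\Gmn)=\omega(\Gmn)$. Writing $t=\lfloor m/2\rfloor=(m-1)/2$, a short optimization over the ball of radius $m$ about a vertex of degree $\Delta$ gives $\omega(\Gmn)=t\Delta+2$ for odd $m$ (one branch contributes $t+1$ subdivision vertices, the remaining $\Delta-1$ branches contribute $t$ each, plus the central vertex); since $\chi\ge\omega$ always, the whole task is to produce a proper coloring of $\Gmn$ with exactly $t\Delta+2$ colors. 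Because $m<n$ keeps the neighbourhoods of distinct original vertices essentially disjoint, every clique is localized either along a single subdivided edge or around a single original vertex, so the coloring can be assembled locally: color each subdivided path by a periodic pattern that is proper on every window of $m+1$ consecutive vertices (this needs only $m+1=2t+2\le t\Delta+2$ colors, so path interiors are never the bottleneck), and then reconcile these patterns at the junctions, i.e.\ at the original vertices.

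For $\Delta\ge 4$ and odd $m$, Theorem~2 already yields $\chi\le\omega+2$, and the odd-case analysis of the paper gives equality for roughly half the residue classes of $n$. The plan is to remove the surplus two colors for the remaining residues by choosing the junction patterns more carefully. The two extra colors come from the parity asymmetry of the odd-$m$ clique (the distinguished ``$t+1$'' branch), so I would fix a proper edge coloring of $G$ with $\Delta$ colors (Vizing class~$1$) or $\Delta+1$ colors and use the color of each edge to select the phase offset of the periodic pattern running along its subdivided path. One then shows the offsets at each vertex can be chosen so that the patterns on the $\le\Delta$ incident branches interleave inside $t\Delta+2$ colors for all $n$, not merely half; the slack afforded by $\Delta\ge 4$, where $t\Delta+2$ exceeds the naive per-window demand $m+1$ by $t(\Delta-2)\ge 2t$, is what should let the parity defect be absorbed uniformly in $n$.

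The delicate case is $\Delta(G)=3$, where $\omega=3t+2$ leaves the least room (slack only $t$) and where the prism is the unique failure. Here I would split connected cubic and subcubic graphs according to whether they are $3$-edge-colorable. For a class-$1$ graph the three edge classes at each vertex canonically determine the three phase offsets, and whether the local patterns glue into a global proper coloring becomes a closing-up condition around the cycles of $G$: each cycle carries an integer ``holonomy'' measuring the cumulative offset drift, and a global coloring with $3t+2$ colors exists precisely when this drift can be cancelled. I expect this holonomy to be cancellable for every connected cubic graph except for the rigid combination of short cycles present in $C_3\square K_2$ at $(m,n)=(3,5)$, and non-class-$1$ or subcubic graphs to have a vertex of degree $<3$ or a long cycle supplying the freedom to cancel it. The main obstacle---and the reason this remains only a conjecture---is precisely this uniqueness step: one must prove that the triangular prism is the \emph{only} connected graph whose local patterns cannot be reconciled, which seems to require either a full structural classification of the obstructing configurations (for instance a discharging or girth argument isolating the prism) or a uniform argument that every other graph admits enough routing freedom to kill the holonomy.
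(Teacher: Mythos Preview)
The statement you are addressing is a \emph{conjecture} in the paper, not a theorem: the authors do not prove it, they propose it. There is therefore no ``paper's proof'' to compare your attempt against. What the paper does is assemble partial evidence---Theorem~1 for even $m$, Theorem~2 giving $\chi\le\omega+2$ for odd $m$ and $\Delta\ge4$, equality for complete graphs, equality for roughly half the values of $n$ when $m$ is odd, and the dynamic-coloring sufficient condition---and then state the conjecture that the prism at $(m,n)=(3,5)$ is the unique obstruction. Your write-up correctly recognizes that the odd-$m$ case is the open part and that the cubic case is the crux, and you yourself flag the argument as incomplete; so what you have is a strategy outline, not a proof.

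On the substance of the outline, two points. First, your plan for $\Delta\ge4$ (``absorb the parity defect uniformly in $n$ using the slack $t(\Delta-2)$'') is exactly the gap the paper could not close: Theorem~2 already exploits that slack via the half-edge/compatible-coloring machinery and still leaves $+2$, and the improvement to equality in Section~5 needed either a restricted range of $n$ or a $4$-dynamic coloring hypothesis. Saying the slack ``should'' suffice is not an argument; you would need a concrete construction that eliminates the two auxiliary colors $\diamondsuit_1,\diamondsuit_2$ (or their analogue in your phase-offset language) for every $n$, and nothing in your sketch indicates how. Second, the cubic ``holonomy'' idea is suggestive but far from a proof: you would need to define the holonomy precisely, show it is the exact obstruction, and then run a structural classification to isolate the prism---none of which is carried out. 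In short, your proposal is a fair informal summary of where the problem stands and why it is hard, but it does not advance beyond what the paper already establishes, and it should not be presented as a proof.
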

	
	\section{Notation and Preliminaries}\label{S_Notation}
	
	In this paper we only consider finite simple graphs. Next we always assume that $m$ and $n$ are positive integers such that $m<n$.	
The graph $\Gmn$ is constructed from $G$ in two steps. First, every edge $uv$ is replaced by a path $P_{uv}$ (called a \emph{superedge}) of length $n$, forming $\Gon$. Second, edges joining vertices of distance at most $m$ in $\Gon$ are added, forming $\Gmn$.
	
	 Let $(uv)_i$ ($i=0,\dots,n$) be a vertex of $\Gmn$ that lies on $P_{uv}$ and has distance $i$ from $u$ in $\Gon$. If $i=0$ or $i=n$, then $(uv)_i$ is a \emph{branch vertex}, otherwise $(uv)_i$ is an \emph{internal vertex}.
	
	For an edge $uv$ in $G$, the ordered $\lfloor \frac{m}{2}\rfloor$-tupple of vertices of $V(\Gmn)$ defined by $$B_{uv}=((uv)_1,\dots, (uv)_{\lfloor \frac{m}{2}\rfloor})$$ is called a \emph{bubble (at} $u$). If $m$ is odd, then we say that the set of vertices $$C_{u}=\{(uv)_{(m+1)/2}\in V(\Gmn):uv\in E(G)\}$$ is the \emph{crust at} $u$.
	Lastly, $M_{uv}$, called a \emph{middle part}, is the tuple of vertices between the two bubbles (or two crusts if $m$ is odd) on the edge $uv$ defined by
	$$M_{uv}=((uv)_{\lceil \frac{m}{2}\rceil+1},\dots, (uv)_{n-(\lceil \frac{m}{2}\rceil+1)}).$$
	
	\begin{figure}
  \begin{center}
  \subfloat[$m$ even.]{\label{def_even}
    \includegraphics{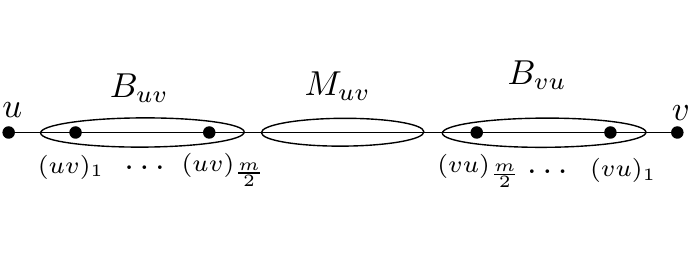}}
     \hspace{0.5cm}
      \subfloat[$m$ odd.]{\label{def_odd}
    \includegraphics{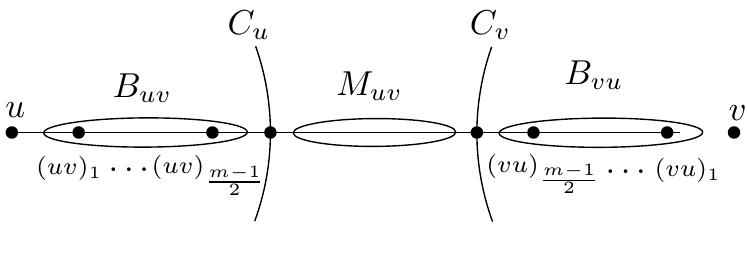}}
      \caption{Bubbles, crusts, and middle parts.}\label{def}
  \end{center}
\end{figure}

	For a $k$-tuple $A=(a_1,\dots,a_k)$ we define $\overline{A}=(a_k,\dots,a_1)$. Also, we write $A[i:j]=(a_i,\dots,a_j)$ and $A[i]=(a_i)$. So, we have $\overline{A}[i:j]=(a_{k-i+1},\dots,a_{k-j+1})$ and $\overline{A}[i]=(a_{k-i+1})$.
If $B=(b_1,\dots,b_k)$ is another $k$-tuple, then $\varphi(A)=B$ means $\varphi(A[i])=B[i]$ (i.e. $a_i=b_i$) for all $i\in[k]$ for some function $\varphi$. To shorten notation, we often write $[k]$ instead of $\{1,\dots,k\}$. Lastly, we use the symbol $*$ to denote the merging of two tuples as follows: $(a_1,\dots, a_i)*(a_{i+1},\dots,a_n):=(a_1,\dots ,a_n)$.

We are ready to state results of Iradmusa that we use later in this paper. The first theorem gives the exact values of $\omega(\Gmn)$. 	
	\begin{thm}[\cite{I}]\label{omegaB}
	 If $G$ is a graph and $n, m\in \N$ such that $m<n$, then
	$$\omega(\Gmn)=\left\{
	\begin{array}{llllllllllll}
	 m+1                    & \mbox{ if } \Delta(G)=1,\\
	 \frac{m}{2}\Delta(G)+1 & \mbox{ if }\Delta(G)\geq 2, m \mbox{ even},\\
	 \frac{m-1}{2}\Delta(G)+2 & \mbox{ if }\Delta(G)\geq 2, m \mbox{ odd}.
	\end{array}
	\right.
	$$
	\end{thm}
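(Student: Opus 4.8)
The plan is to prove that the claimed expression is simultaneously a lower and an upper bound for $\omega(\Gmn)$.

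\emph{Lower bound by explicit cliques.} Fix a vertex $u$ of maximum degree $\Delta$ with incident superedges $P_{uv_1},\dots,P_{uv_\Delta}$, and recall that in $\Gon$ the vertex $(uv_k)_i$ is at distance $i$ from $u$, while $(uv_k)_i$ and $(uv_\ell)_j$ with $k\ne\ell$ are at distance $i+j$ (through $u$). If $m$ is even I take $u$ together with $(uv_k)_1,\dots,(uv_k)_{m/2}$ on every incident superedge: two vertices on a common superedge differ in index by at most $\tfrac m2-1<m$, and across superedges the distance is $i+j\le\tfrac m2+\tfrac m2=m$, so this is a clique of size $\tfrac m2\Delta+1$. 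If $m$ is odd I take $u$, the vertices $(uv_k)_1,\dots,(uv_k)_{(m-1)/2}$ on each superedge, and the single crust vertex $(uv_1)_{(m+1)/2}$; all cross distances are at most $\tfrac{m+1}2+\tfrac{m-1}2=m$, giving a clique of size $\tfrac{m-1}2\Delta+2$. Finally, if $\Delta=1$ then $\Gmn$ is a disjoint union of $m$-th powers of paths, and any $m+1$ consecutive path vertices form a clique.

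\emph{Upper bound via a local tree.} The key observation is that $\Gon$ has girth at least $3n$ (a shortest cycle of $G$ has length at least $3$ and each edge becomes a path of length $n$). Since $m<n$, for any vertex $x_0$ the set $B(x_0,m)$ of vertices at distance at most $m$ from $x_0$ in $\Gon$ induces a connected subgraph containing no cycle (a cycle would have length at most $2m<3n$), hence a tree $T_0$. Moreover, for all $y,z\in B(x_0,m)$ with $d_{\Gon}(y,z)\le m$ one has $d_{T_0}(y,z)=d_{\Gon}(y,z)$: otherwise a geodesic of $\Gon$ and the $T_0$-path between $y$ and $z$ would differ and together bound a cycle of length at most $m+2m=3m<3n$, contradicting the girth bound. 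Consequently, taking $x_0$ in an arbitrary clique $S$, every vertex of $S$ lies in $B(x_0,m)$ and all pairwise distances are preserved, so $S$ is a clique of the tree power $T_0^{\,m}$.

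\emph{Optimizing over the tree.} It now suffices to bound $\omega(T_0^{\,m})$. The branch vertices are exactly the images of $V(G)$, with degree $\deg_G(\cdot)\le\Delta$; all other vertices have degree at most $2$, and any two distinct branch vertices are at distance at least $n$. Using the standard centering property of tree metrics, a clique of $T_0^{\,m}$ lies in a ball $B_{T_0}(c,m/2)$ about a vertex $c$ when $m$ is even, and in $B_{T_0}(p,(m-1)/2)\cup B_{T_0}(q,(m-1)/2)$ about the endpoints of an edge $pq$ when $m$ is odd. In either case this region has diameter at most $m<n$, so it meets at most one branch vertex; thus it is a union of paths emanating from at most one branch vertex of degree $d\le\Delta$. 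A direct count then shows the size is maximized by centering at a branch vertex of degree $\Delta$, yielding $\tfrac m2\Delta+1$ (even $m$) and $\tfrac{m-1}2\Delta+2$ (odd $m$); if no branch vertex is present the region is a subpath of size at most $m+1$. Comparing with $m+1$ gives the stated value for $\Delta\ge 2$, and the pure path case $\Delta=1$ gives $m+1$.

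\emph{Main obstacle.} The routine part is the explicit cliques and the final optimization over the tree. The technical heart is the second step: establishing that the locally relevant part of $\Gon$ around any clique is genuinely a tree with the correct induced metric. This is where the hypothesis $m<n$ is essential, and it is precisely this girth-based argument that lets one dispense with a delicate direct case analysis of shortcuts through short cycles of $G$ (which would otherwise be awkward in the regime $n\le 2m$, where a clique vertex can be close to both endpoints of its superedge). I expect verifying the metric-preservation claim and the odd-$m$ edge-centered count to require the most care.
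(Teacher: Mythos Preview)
The paper does not prove this theorem; it is quoted from Iradmusa~\cite{I} and used as a black box. The only hint about the original argument appears in the proof of Lemma~\ref{l_crust}, where it is recalled that the maximal cliques of $\Gmn$ consist of a branch vertex $v$ of maximum degree, all bubbles at $v$, and one vertex of the crust $C_v$. Your proof is correct and recovers exactly this description: the explicit cliques you exhibit for the lower bound are precisely these, and your upper-bound optimisation over the local tree shows that no larger clique exists.

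Your reduction to a tree power via the girth bound $\mathrm{girth}(\Gon)\ge 3n$ is a clean way to make the upper bound rigorous; the Helly/centre argument for cliques in tree powers is standard and does the counting correctly. This is arguably a more structural packaging than a direct case analysis of how a clique can sit inside $\Gon$, but it leads to the same characterisation of maximal cliques, so the two approaches are essentially equivalent in content.

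One small slip to fix when you write it up: you claim that a cycle contained in $B(x_0,m)$ has length at most $2m$. That bound is not immediate (and fails in general graphs). The clean argument is to take a BFS tree of $\Gon$ rooted at $x_0$; any edge of the induced subgraph on $B(x_0,m)$ not in this tree closes a fundamental cycle of length at most $d(x_0,y)+d(x_0,z)+1\le 2m+1$. Since $2m+1<3(m+1)\le 3n$, the conclusion (that the induced ball is a tree) is unaffected. Your metric-preservation step, bounding the cycle by $m+2m=3m<3n$, is already stated correctly.
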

		Since $\chi(H)\geq \omega(H)$ for any graph $H$, it suffices to show $\chi(\Gmn)\leq \omega(\Gmn)$ to prove Conjecture~\ref{conj1}. By Theorem~\ref{omegaB}, we thus need to construct a coloring of $\Gmn$ using $\frac{m}{2}\Delta(G)+1$ colors if $m$ is even, and
	$\frac{m-1}{2}\Delta(G)+2$ colors if $m$ is odd.

By \cite[Lemma1]{I}, if $\chi(\Gmn)=\omega(\Gmn)$, then $\chi(G^{\frac{m}{n+m+1}})=\omega(G^{\frac{m}{n+m+1}})$.
This result can be generalized as follows.
	
	\begin{lem}\label{threeB}
	 Let $G$ be a graph, and $m,n\in\N$ such that $m<n$. If $\chi(\Gmn)\leq\omega(\Gmn)+c$ for some $c\in \N\cup \{0\}$, then $\chi(G^{\frac{m}{n+m+1}})\leq\omega(G^{\frac{m}{n+m+1}})+c$.
	\end{lem}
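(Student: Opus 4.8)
The plan is to reduce the statement, via Theorem~\ref{omegaB}, to a colouring construction, and then to transfer a proper colouring of $\Gmn$ to $G^{\frac{m}{n+m+1}}$ by repeating once the short initial block of internal vertices on every superedge.

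First I would note that $\omega(\Gmn)=\omega(G^{\frac{m}{n+m+1}})$: since $m<n$ and $m<n+m+1$, Theorem~\ref{omegaB} applies to both graphs and gives each clique number by the same formula in $m$ and $\Delta(G)$. So, setting $N:=\omega(\Gmn)+c$, it suffices to build from a proper $N$-colouring $\phi$ of $\Gmn$ a proper $N$-colouring $\psi$ of $G^{\frac{m}{n+m+1}}$. Orient $E(G)$ arbitrarily. Observe that $G^{\frac{1}{n+m+1}}$ is obtained from $\Gon$ by subdividing, in each superedge, \emph{any one} of its edges into a path of length $m+2$; so we may take, on the superedge of an edge oriented $u\to v$, the first edge $(uv)_0(uv)_1$ and subdivide it, calling the new internal vertices $a^{uv}_1,\dots,a^{uv}_{m+1}$, listed from $u$. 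Now let $\psi$ agree with $\phi$ on every vertex that already lies in $\Gmn$, and put $\psi(a^{uv}_i):=\phi((uv)_i)$ for $i\in[m+1]$; this is well defined since $n\ge m+1$. Along the superedge from $u$ to $v$ the colour sequence is then
$$\phi((uv)_0),\ \phi((uv)_1),\ \dots,\ \phi((uv)_{m+1}),\ \phi((uv)_1),\ \phi((uv)_2),\ \dots,\ \phi((uv)_n),$$
that is, the colours of the first $m+1$ internal vertices are used twice.

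The main task is to verify that $\psi$ is proper. Assign to position $i$ on a superedge (with $u$ at position $0$) the index $\iota(i)$ of the $\Gmn$-vertex whose colour it now carries: $\iota(i)=i$ for $i\le m+1$ and $\iota(i)=i-(m+1)$ for $i\ge m+2$. If two vertices of one superedge are at distance at most $m$ in $G^{\frac{m}{n+m+1}}$, then a short case analysis on the positions $p<q$ they occupy — both at most $m+1$, both at least $m+2$, or straddling the seam — shows that $\iota(p)$ and $\iota(q)$ differ by an amount in $\{1,\dots,m\}$, so $(uv)_{\iota(p)}$ and $(uv)_{\iota(q)}$ are adjacent in $\Gmn$ and $\phi$ separates them; in particular the two occurrences of the repeated block sit at distance exactly $m+1$, so no edge joins them. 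For two vertices inside the closed ball of radius $m$ about a branch vertex $u$, the key point is that this ball, together with $\psi$, is isomorphic to the radius-$m$ ball about $u$ in $\Gmn$, together with $\phi$: a new vertex $a^{uv}_i$ on an edge oriented out of $u$ lies at distance $i$ from $u$ and carries colour $\phi((uv)_i)$, just like $(uv)_i$ in $\Gmn$, whereas every original vertex near $u$ retains both its distance to $u$ and its colour; since any shortest path between two vertices near $u$ runs through $u$, adjacency is controlled by the same inequality on their distances to $u$ in both graphs. Hence $\psi$ restricted to such a ball is a copy of a proper colouring. Since $\psi$ uses no colour outside those of $\phi$, it is a proper $N$-colouring of $G^{\frac{m}{n+m+1}}$.

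The step I expect to be most delicate is precisely this properness check near branch vertices: one must rule out a clash between two new vertices lying on distinct out-oriented superedges at a common vertex (an artefact of the asymmetric choice of subdivided edge), and also check that nothing goes wrong at the seam between the repeated block and the tail of the superedge. Both disappear once one notes that every clique of $G^{\frac{m}{n+m+1}}$ lies either within a single superedge, where it is handled by the index map $\iota$, or within the radius-$m$ ball of a branch vertex, where it is handled by the local isomorphism with $\Gmn$.
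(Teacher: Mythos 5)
Your proposal is correct, and its overall skeleton is the same as the paper's: observe via Theorem~\ref{omegaB} that $\omega(\Gmn)=\omega(G^{\frac{m}{n+m+1}})$, establish the monotonicity $\chi(G^{\frac{m}{n+m+1}})\leq\chi(\Gmn)$, and chain the inequalities. The difference is where that monotonicity comes from: the paper simply cites the proof of Lemma~1 in \cite{I} for $\chi(G^{\frac{m}{n+m+1}})\leq\chi(\Gmn)$, whereas you prove it from scratch by subdividing one edge of each superedge into a path of length $m+2$ and repeating the colours $\phi((uv)_1),\dots,\phi((uv)_{m+1})$ of the initial block. Your verification is the right one and is complete in the essential points: along a superedge the index map $\iota$ shows that any two positions at distance at most $m$ carry colours of $\Gmn$-vertices whose indices differ by $1,\dots,m$ (the two copies of the repeated block being exactly $m+1$ apart), detours around a superedge have length at least $n+m+1>m$ so they never create adjacencies, and near a branch vertex $u$ the vertex at distance $i\le m$ toward a neighbour $v$ carries $\phi((uv)_i)$ for both orientations of the chosen subdivided edge (for an in-oriented superedge it is the old vertex $(vu)_{n-i}$), so the radius-$m$ ball with $\psi$ is colour-isomorphic to the corresponding ball in $\Gmn$ with $\phi$. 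What the paper's version buys is brevity, at the cost of leaning on an external proof; what yours buys is a self-contained argument that in effect reproves Iradmusa's reduction in the more general ``$\omega+c$'' form needed here, which is exactly the content Lemma~\ref{threeB} adds over \cite[Lemma~1]{I}.
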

	
	\begin{proof}
By Theorem~\ref{omegaB}, $\omega(\Gmn)=\omega(G^{\frac{m}{n+m+1}})$. By the proof of Lemma~1 in \cite{I}, it holds that $\chi(G^{\frac{m}{n+m+1}})\leq \chi(\Gmn)$. Therefore
	$$\chi(G^{\frac{m}{n+m+1}})\leq \chi(\Gmn)\leq \omega(\Gmn)+c=\omega(G^{\frac{m}{n+m+1}})+c.$$
	\end{proof}
	\begin{thm}[\cite{I}]\label{one}
	 If $G$ is a connected graph with $\Delta(G)\geq 3$ and $m\in \N$, then  $\chi(G^{\frac{m}{m+1}})=\omega(G^{\frac{m}{m+1}})$.
	\end{thm}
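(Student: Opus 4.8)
The plan is as follows. Since $\chi(H)\ge\omega(H)$ for every graph $H$, and $\omega(G^{\frac{m}{m+1}})$ is given explicitly by Theorem~\ref{omegaB}, it suffices to exhibit a proper colouring of $G^{\frac{m}{m+1}}$ with $\frac m2\Delta(G)+1$ colours when $m$ is even, and with $\frac{m-1}{2}\Delta(G)+2$ colours when $m$ is odd. The simplification coming from $n=m+1$ is that every superedge of $G^{\frac{1}{m+1}}$ then consists of just the two bubbles $B_{uv}$ and $B_{vu}$, together — when $m$ is odd — with a single crust vertex $(uv)_{(m+1)/2}$ which is at once the unique element of $C_u$ and of $C_v$ on that superedge; there is no middle part.

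The engine of the construction is the subdivision $S(G)=G^{\frac12}$. It is bipartite (original vertices versus subdivision vertices) and, since $\Delta(G)\ge2$, has maximum degree $\Delta(G)$; hence it has a proper edge-colouring with the colour set $\{0,\dots,\Delta(G)-1\}$ (K\"onig's theorem). Reading this off $G$, I get for every edge $uv$ and endpoint $u$ a label $\sigma_u(uv)\in\{0,\dots,\Delta(G)-1\}$ such that $\sigma_u(uv)\ne\sigma_u(uw)$ whenever $v\ne w$, and $\sigma_u(uv)\ne\sigma_v(uv)$ for every edge. I would then colour $G^{\frac{m}{m+1}}$ from the palette $\{0\}\cup\bigl(\{0,\dots,\Delta(G)-1\}\times[\lfloor m/2\rfloor]\bigr)$, augmented by one extra colour $\star$ when $m$ is odd; this palette has precisely the target size. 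Give colour $0$ to every branch vertex; give a bubble vertex $(uv)_i$ of $B_{uv}$ (so $1\le i\le\lfloor m/2\rfloor$) the colour $(\sigma_u(uv),i)$; and, if $m$ is odd, give $\star$ to every crust vertex. Reserving $0$ and $\star$ is legitimate: branch vertices are pairwise nonadjacent in $G^{\frac{m}{m+1}}$ (two adjacent in $G$ lie at distance $m+1$ in $G^{\frac{1}{m+1}}$), and crust vertices are pairwise nonadjacent as well (two crust vertices on superedges through a common vertex $u$ lie at distance $2\cdot\frac{m+1}{2}=m+1$).

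To verify properness I would first record the local adjacency rule in $G^{\frac{1}{m+1}}$: on a single superedge all $m$ internal vertices form a clique, and for $v\ne w$ the vertices $(uv)_i$ and $(uw)_j$ are adjacent if and only if $i+j\le m$, while all other pairs lie at distance greater than $m$. Translated into bubble/crust coordinates, two \emph{bubble} vertices of the same colour can never be adjacent: if they lie in two bubbles at the same vertex the $\sigma$-labels already differ; if they are the two bubbles on a single superedge then $\sigma_u(uv)\ne\sigma_v(uv)$ separates them; and in the remaining case — a bubble at $u$ against a bubble at the opposite end $w$ of a common superedge $uw$ — adjacency turns out to hold exactly when the positions satisfy $i<j$, so even when the two $\sigma$-labels coincide the positions are forced to differ. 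A crust vertex on $uv$, finally, is adjacent only to $u$, to $v$, and to bubble vertices at $u$ or at $v$ — never to another crust vertex — so the colour $\star$ is never repeated across an edge. Hence $\chi(G^{\frac{m}{m+1}})\le\omega(G^{\frac{m}{m+1}})$, and equality follows.

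I expect the only real work to lie in the adjacency bookkeeping of the previous paragraph: establishing the rule $i+j\le m$ and, above all, tracking how bubbles anchored at the two different ends of a superedge — and the crust vertices — interact, since this is the source of both the inequality $i<j$ and the independence of the crust set, and these are exactly what make the reserved colours together with the position coordinate suffice. Everything after that is a short case analysis. I note that connectivity of $G$ is not needed for this argument; only $\Delta(G)\ge2$ is used, to guarantee $\Delta(S(G))=\Delta(G)$.
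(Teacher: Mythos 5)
Your argument is correct, but there is nothing in the paper to compare it against line by line: Theorem~\ref{one} is imported from Iradmusa~\cite{I} and stated without proof, so you have in effect supplied a self-contained proof of a cited result. Your route is the natural specialization of the machinery the paper develops for the general case: with $n=m+1$ every superedge is exactly two bubbles (plus one crust vertex when $m$ is odd) and the middle parts are empty, which is precisely the part of the construction in Sections~\ref{S_Even}--\ref{S_Odd} where all the difficulty lives; what remains is the bubble coloring driven by a half-edge coloring, and your colour count $1+\Delta\lfloor m/2\rfloor$ (plus $\star$ for odd $m$) matches $\omega(\Gmn)$ from Theorem~\ref{omegaB}. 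Two points worth noting. First, you obtain the half-edge coloring (your labels $\sigma_u(uv)$) from K\"onig's edge-colouring theorem applied to the bipartite subdivision $\Got$, whereas the paper gets it via Brooks' theorem (Lemma~\ref{G23}/Lemma~\ref{L_half}); your derivation is cleaner and is why your statement needs only $\Delta(G)\ge 2$ and no connectivity, i.e.\ it is slightly stronger than the cited theorem. Second, your adjacency bookkeeping is sound: the rule that $(uv)_i$ and $(uw)_j$ (re-anchored at a common branch vertex) are adjacent iff $i+j\le m$, together with the translation to bubble coordinates, does give that same-anchor conflicts are killed by $\sigma_u(uv)\neq\sigma_u(uw)$, same-superedge conflicts by $\sigma_u(uv)\neq\sigma_v(uv)$, and cross-end conflicts only arise under a strict position inequality, hence never for equal colours; the crust set and the branch set are independent, so $\star$ and $0$ are safe. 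The only cosmetic imprecision is that your ``remaining case'' silently lumps together several configurations (e.g.\ two bubbles whose superedges share only their far endpoint, or disjoint superedges), but those pairs are at distance at least $m+2$ by your own distance rule, so nothing is missing.
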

	
The next lemma follows by inductively applying Lemma~\ref{threeB} and Theorem~\ref{one}. We will use it repeatedly.
	
	\begin{lem}\label{range}
If $\chi(\Gmn)\leq\omega(\Gmn)+c$ for all $n=m+2, \dots, 2m+1$ and some $c\in \N\cup\{0\}$, then $\chi(\Gmn)\leq\omega(\Gmn)+c$ for all $n$ with $n>m$. In Particular, if Conjecture~\ref{conj1} holds for $n=m+2, \dots, 2m+1$, then it holds for all $n$ with $n>m$.
	\end{lem}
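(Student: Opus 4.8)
The plan is to prove the statement by induction on $n$, using Theorem~\ref{one} to establish the base case $n=m+1$ and Lemma~\ref{threeB} to pass from the index $\frac{m}{n}$ to $\frac{m}{n+m+1}$. Throughout, as in the rest of the paper, I assume $G$ is connected with $\Delta(G)\ge 3$, which is what Theorem~\ref{one} requires. First I would assemble the base data: the hypothesis gives $\chi(\Gmn)\le\omega(\Gmn)+c$ for $n=m+2,\dots,2m+1$, and Theorem~\ref{one} gives $\chi(G^{\frac{m}{m+1}})=\omega(G^{\frac{m}{m+1}})\le\omega(G^{\frac{m}{m+1}})+c$ since $c\ge 0$. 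Hence the required inequality holds for every $n$ in the block $I_1:=\{m+1,m+2,\dots,2m+1\}$, which is precisely the set of integers $n$ with $m+1\le n\le 2(m+1)-1$.

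Next I would observe that the blocks $I_k:=\{k(m+1),\,k(m+1)+1,\,\dots,\,(k+1)(m+1)-1\}$ for $k\ge 1$ are pairwise disjoint with $\bigcup_{k\ge 1}I_k=\{n:n>m\}$, and that the shift $n\mapsto n+m+1$ carries $I_k$ bijectively onto $I_{k+1}$. I would then prove, by induction on $k$, that $\chi(\Gmn)\le\omega(\Gmn)+c$ for all $n\in I_k$. The case $k=1$ is the base block above. For the inductive step, given $n\in I_{k+1}$, set $n':=n-(m+1)\in I_k$; every element of $I_k$ is at least $m+1>m$, so $m<n'$ and Lemma~\ref{threeB} applies to the index $\frac{m}{n'}$. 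Combined with the induction hypothesis $\chi(G^{\frac{m}{n'}})\le\omega(G^{\frac{m}{n'}})+c$, it yields $\chi(G^{\frac{m}{n'+m+1}})\le\omega(G^{\frac{m}{n'+m+1}})+c$, that is, $\chi(\Gmn)\le\omega(\Gmn)+c$ since $n'+m+1=n$. Because $\bigcup_{k\ge 1}I_k=\{n:n>m\}$, this proves the first assertion. The ``in particular'' clause is the case $c=0$: if $\chi(\Gmn)=\omega(\Gmn)$ (equivalently $\chi(\Gmn)\le\omega(\Gmn)$) for $n=m+2,\dots,2m+1$, the argument gives $\chi(\Gmn)\le\omega(\Gmn)$ for all $n>m$, and $\chi\ge\omega$ always holds.

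I do not expect a genuine obstacle here; the only thing needing care is the bookkeeping that the base block $I_1$ has length exactly $m+1$, so that its translates by multiples of $m+1$ tile $\{n:n>m\}$ with no gaps or overlaps --- equivalently, that subtracting $m+1$ from any $n\ge 2m+2$ lands in a block already treated and never below $m+1$. If preferred, the whole argument can be restated as a single strong induction on $n$: for $n\ge 2m+2$ one applies the induction hypothesis to $n-(m+1)$, which satisfies $m<n-(m+1)<n$, and then invokes Lemma~\ref{threeB}; the blocks $I_k$ are only a convenient way to organize the same reduction.
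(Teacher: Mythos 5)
Your argument is correct and is exactly the paper's intended proof: the paper gives no details, saying only that the lemma ``follows by inductively applying Lemma~\ref{threeB} and Theorem~\ref{one}'', and your block decomposition with base block $\{m+1,\dots,2m+1\}$ (using Theorem~\ref{one} for $n=m+1$) and the shift $n\mapsto n+m+1$ via Lemma~\ref{threeB} is precisely that induction, with the tiling bookkeeping made explicit.
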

	
	
Throughout this paper, we will also use the following well-known result by K\"{o}nig~\cite{K}.
	
	\begin{lem}[\cite{K}]\label{regular}
	For every graph $G$ with maximum degree $\Delta$ there exists a $\Delta$-regular graph containing $G$ as an induced subgraph.
	\end{lem}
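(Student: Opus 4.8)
The plan is to prove this by induction on $\Delta-\delta(G)$, where $\delta(G)$ denotes the minimum degree of $G$. If $\Delta-\delta(G)=0$, then $G$ is already $\Delta$-regular and there is nothing to prove. For the inductive step I would describe a \emph{doubling} construction which keeps $G$ as an induced subgraph, keeps the maximum degree equal to $\Delta$, but strictly raises the minimum degree, so that the induction parameter decreases.

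Concretely, suppose $H$ is a graph with $\Delta(H)=\Delta$ and $\delta(H)<\Delta$ that contains $G$ as an induced subgraph. Form $H'$ by taking two disjoint copies $H_1$ and $H_2$ of $H$ and, for every vertex $v$ of $H$ with $d_H(v)<\Delta$, adding an edge joining the copy of $v$ in $H_1$ to the copy of $v$ in $H_2$. These new edges form a matching, so every vertex corresponding to a vertex of degree $<\Delta$ in $H$ gains exactly one neighbour, while the vertices of degree $\Delta$ are untouched; hence $\Delta(H')=\Delta$ and $\delta(H')\ge\delta(H)+1$. Since the only added edges run between corresponding vertices of the two copies, no new edge lies inside $H_1$, so $H_1$ is an induced subgraph of $H'$; as $G$ is an induced subgraph of $H\cong H_1$ and this relation is transitive, $G$ is an induced subgraph of $H'$.

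Starting from $H=G$ and iterating the construction, the quantity $\Delta-\delta$ decreases by at least $1$ at each step, so after at most $\Delta-\delta(G)$ steps we reach a $\Delta$-regular graph; it is finite because each step only doubles a finite graph, and it contains $G$ as an induced subgraph by the transitivity argument. I do not expect any genuine obstacle here: the only points needing care are that the matching between the two copies is well defined (each relevant vertex is used once) and that the ``induced'' property is preserved (no edge is created between two vertices of the same copy), both of which follow directly from the description of the construction.
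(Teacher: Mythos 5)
Your proof is correct: the doubling construction (two copies of $H$ joined by a matching on the vertices of degree less than $\Delta$) raises the minimum degree while preserving the maximum degree and the induced copy of $G$, and iterating terminates after at most $\Delta-\delta(G)$ steps. The paper itself gives no proof of this lemma, citing K\"onig, and your argument is precisely the standard one for this classical fact, so there is nothing to reconcile.
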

	
	Lemma~\ref{regular} enables us to restrict our attention to regular graphs. Indeed, if $G$ is not regular, we can find a $\Delta$-regular graph $H$, color $H$, and use the coloring of $H$ on $G$. The following result is proven in \cite[Theorem 3]{I} using a special type of vertex ordering and induction. Here we give another simple proof.
	
	\begin{lem}\label{G23}
	 If $G$ is a connected graph with maximum degree $\Delta\geq 3$, then there exists a proper coloring $h:V(G^{\frac{2}{3}})\rightarrow \{0,\dots,\Delta\}$ using the color $0$ exactly on branch vertices.
	\end{lem}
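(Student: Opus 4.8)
The plan is to derive the required colouring of $G^{\frac{2}{3}}$ from a proper edge-colouring of the once-subdivided graph $G^{\frac{1}{2}}$. First I would write down the adjacencies of $G^{\frac{2}{3}}$ explicitly: each edge $uv\in E(G)$ becomes a path $u,(uv)_1,(uv)_2,v$ of length $3$ in $G^{\frac{1}{3}}$, so after squaring, the neighbours of the internal vertex $(uv)_1$ are exactly $u$, $v=(uv)_3$, $(uv)_2$, and the vertices $(uw)_1$ for the remaining neighbours $w$ of $u$; symmetrically for $(uv)_2=(vu)_1$. In particular no two branch vertices are adjacent in $G^{\frac{2}{3}}$ (they lie at distance $3$ in $G^{\frac{1}{3}}$), so colouring every branch vertex with $0$ creates no conflict, and since $0$ will be used nowhere else, every edge of $G^{\frac{2}{3}}$ joining a branch vertex to an incident internal vertex is automatically properly coloured. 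Thus it suffices to colour the internal vertices with $\{1,\dots,\Delta\}$ so that $(uv)_1\neq(uv)_2$ on each superedge and, at every branch vertex $u$, the $d(u)$ vertices $\{(uv)_1:v\sim u\}$ get pairwise distinct colours.

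These two requirements are exactly a proper edge-colouring of $G^{\frac{1}{2}}$. Writing $m_e$ for the subdivision vertex on $e=uv$, identify the internal vertex $(uv)_1$ of $G^{\frac{2}{3}}$ with the edge $um_e$ of $G^{\frac{1}{2}}$ and $(uv)_2=(vu)_1$ with $vm_e$; then ``the two edges incident to $m_e$ differ'' is the condition $(uv)_1\neq(uv)_2$, while ``the edges incident to $u$ are pairwise distinct'' is the clique condition at the branch vertex $u$. Now $G^{\frac{1}{2}}$ is bipartite with parts $V(G)$ and $\{m_e:e\in E(G)\}$, and $\Delta(G^{\frac{1}{2}})=\max(\Delta,2)=\Delta$ since $\Delta\geq3$; hence, by König's edge-colouring theorem \cite{K}, $G^{\frac{1}{2}}$ admits a proper edge-colouring $\phi:E(G^{\frac{1}{2}})\to\{1,\dots,\Delta\}$. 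Defining $h(x)=0$ for every branch vertex $x$, and $h((uv)_1)=\phi(um_e)$, $h((uv)_2)=\phi(vm_e)$ for $e=uv$, then yields a map $h:V(G^{\frac{2}{3}})\to\{0,\dots,\Delta\}$ using $0$ precisely on the branch vertices, and properness follows by checking the four adjacency types listed above.

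I do not anticipate a real obstacle: the crux is the one-line observation that $G^{\frac{1}{2}}$ is bipartite, so König's theorem gives it chromatic index $\Delta$. The only points requiring care are keeping track of the ``near $u$''/``near $v$'' labelling when transporting $\phi$ back to $G^{\frac{2}{3}}$, and verifying that the adjacency list above is complete — both are immediate from the fact that a superedge is a path of length $3$.
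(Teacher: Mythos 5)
Your proof is correct, and it takes a different route from the paper at the decisive step. The paper also begins by putting $0$ on all branch vertices, but then it deletes them and observes that the remaining graph $H$ on the internal vertices has $\Delta(H)=\Delta$ and is neither an odd cycle nor a complete graph, so Brooks' theorem gives $\chi(H)\le\Delta$; only afterwards, in a remark, does it note that $H$ is the line graph of $G^{\frac{1}{2}}$, i.e.\ that the coloring is really a half-edge coloring. You make that identification the centerpiece and then colour the edges of $G^{\frac{1}{2}}$ directly: since $G^{\frac{1}{2}}$ is bipartite (branch vertices versus subdivision vertices) with maximum degree $\Delta$ (using $\Delta\ge 3\ge 2$), K\H{o}nig's edge-colouring theorem yields a proper $\Delta$-edge-colouring, which transports back to the internal vertices exactly as you describe; your adjacency analysis of $G^{\frac{2}{3}}$ is complete and the branch-vertex step is sound. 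What each approach buys: yours avoids Brooks entirely and in particular avoids having to justify that $H$ is neither complete nor an odd cycle (a point the paper asserts without argument), at the cost of the (easy) bipartiteness observation; the paper's Brooks argument needs no structural identification of $H$ beyond its maximum degree, but the line-graph viewpoint is needed anyway for the half-edge colourings used later, so your argument arguably fits the rest of the paper at least as naturally. One small caveat: the paper's reference \cite{K} is cited there for the embedding of a graph into a $\Delta$-regular graph, so if you lean on K\H{o}nig's edge-colouring theorem you should cite it explicitly as such (the 1916 paper does contain it), or state it as a known result.
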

	
	\begin{proof}
First, we color all branch vertices with $0$. It remains to color the internal vertices with colors $1,\dots, \Delta$. Let $H$ be a graph that arises if we delete all branch vertices of $G^{\frac{2}{3}}$. Then $\Delta(H)=\Delta$, and $H$ is neither an odd cycle nor a complete graph. Thus, $\chi(H)\leq \Delta$ by Brooks' theorem.
	\end{proof}
	
Observe that the graph $H$ that arises if we delete all branch vertices of $G^{\frac{2}{3}}$ is isomorphic to the line graph of $\Got$. Therefore, we can think of coloring of internal vertices of $G^{\frac{2}{3}}$ as of coloring of edges of $\Got$. So, Lemma~\ref{G23} equivalently states that for a graph with degree $\Delta\geq 3$, there is a proper edge coloring $h:E(G^{\frac{1}{2}})\rightarrow \{1,\dots,\Delta\}$. We refer to such a coloring as a \emph{half-edge coloring of $G$}, and to the 'halves' of edges of $G$ that correspond to the edges in $G^{\frac{1}{2}}$ simply as \emph{half-edges of $G$}. For an edge $e=uv$ in $G$, $e_{uv}$ denotes the half-edge on $e$ that is adjacent to $u$.

	\begin{lem}\label{L_half}
	 If $G$ is a graph with maximum degree $\Delta\geq 3$, then there exists a (proper) half-edge coloring  $h:E(G^{\frac{1}{2}})\rightarrow [\Delta]$.
	\end{lem}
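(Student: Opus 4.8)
The plan is to deduce Lemma~\ref{L_half} from Lemma~\ref{G23} together with K\"onig's reduction to regular graphs (Lemma~\ref{regular}). Recall the dictionary recorded just before the statement: deleting all branch vertices of $G^{\frac{2}{3}}$ leaves a graph isomorphic to the line graph of $G^{\frac{1}{2}}$, so a proper coloring of the \emph{internal} vertices of $G^{\frac{2}{3}}$ with colors from $[\Delta]$ is precisely a proper edge coloring of $G^{\frac{1}{2}}$ with $[\Delta]$, i.e.\ a half-edge coloring of $G$. Thus Lemma~\ref{L_half} is essentially the ``internal part'' of Lemma~\ref{G23}; the only gap is that Lemma~\ref{G23} is stated for connected graphs, whereas here $G$ is arbitrary (and need not be regular).

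First I would apply Lemma~\ref{regular} to obtain a $\Delta$-regular graph $\widehat{G}$ containing $G$ as a subgraph. Every connected component of $\widehat{G}$ is a connected $\Delta$-regular graph, hence has maximum degree $\Delta\ge 3$, so Lemma~\ref{G23} applies to each of them and yields a proper coloring of the $\frac{2}{3}$-power of that component with $\{0,\dots,\Delta\}$ that uses the color $0$ exactly on branch vertices. Restricting each such coloring to the internal vertices and invoking the line-graph identification above, I obtain a half-edge coloring of each component with colors in $[\Delta]$; the union over all components is a half-edge coloring $\widehat{h}:E(\widehat{G}^{\frac{1}{2}})\to[\Delta]$ of $\widehat{G}$.

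Finally I would restrict $\widehat{h}$ to $G$. Since $G$ is a subgraph of $\widehat{G}$, every half-edge of $G$ is a half-edge of $\widehat{G}$, and any two half-edges of $G$ that share a vertex of $G^{\frac{1}{2}}$ (a branch vertex of $G$, or the midpoint of an edge of $G$) also share that vertex in $\widehat{G}^{\frac{1}{2}}$; hence the restriction is still proper, giving the desired $h:E(G^{\frac{1}{2}})\to[\Delta]$.

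There is no genuinely hard step here: the real content sits in Lemma~\ref{G23} (and in Brooks' theorem underlying it). The only things to verify are the bookkeeping of the line-graph equivalence and the routine observation that a proper half-edge coloring restricts to a proper half-edge coloring on a subgraph. The one point I expect to need an explicit sentence is why Lemma~\ref{G23} may be applied componentwise after passing to $\widehat{G}$ — namely, that $\Delta$-regularity forces every component of $\widehat{G}$ to again have maximum degree $\Delta\ge 3$, so none of the ``bad'' cases of Brooks' theorem arises.
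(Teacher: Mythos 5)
Your proof is correct and follows essentially the paper's own route: the paper treats Lemma~\ref{L_half} as a direct reformulation of Lemma~\ref{G23}, via the observation that deleting the branch vertices of $G^{\frac{2}{3}}$ leaves exactly the line graph of $G^{\frac{1}{2}}$, so a coloring of the internal vertices is a half-edge coloring. Your extra step --- passing to a $\Delta$-regular supergraph by Lemma~\ref{regular} and applying Lemma~\ref{G23} componentwise before restricting back to $G$ --- is a legitimate (and slightly more careful) way to bridge the connectedness hypothesis of Lemma~\ref{G23}, which the paper leaves implicit.
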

	
	\section{Theorem~1 for non-cubic graphs}\label{S_Even}
	
In this section we show that Conjecture~\ref{conj1} is true if $m$ is even and $\Delta(G)\geq 4$. We sketch the basic idea of the technique used here. Suppose that we have a half-edge coloring of $G$ using $\Delta(G)$ colors. For each color $a$ we introduce $\frac{m}{2}$ new colors $a_1,\dots, a_{\frac{m}{2}}$, and use them on each bubble whose corresponding half-edge has color $a$. So, we use $\frac{m}{2} \Delta(G)$ colors to color vertices of bubbles. As a next step we show that it is possible to color the middle vertices from the same set of colors. So, if we use an additional color $0$ for branch vertices of $\Gmn$, then we can conclude that $\chi(\Gmn)\leq \frac{m}{2} \Delta(G)+1=\omega(\Gmn)$, as needed.

	\begin{thm}~\label{miseven}
	 If $m$ is even and $G$ is a connected graph with $\Delta(G)\ge 4$, then $\chi(\Gmn)=\omega(\Gmn)$.
	\end{thm}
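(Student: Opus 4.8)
The plan is to follow exactly the sketch laid out just before the theorem statement, and the main work is to make the ``color the middle vertices'' step precise. By Lemma~\ref{range} it suffices to prove the bound for $n = m+2, \dots, 2m+1$, so in particular we may assume $n \le 2m+1$, which keeps the middle part $M_{uv}$ short (of length $n - 2\lceil m/2\rceil - 2 = n - m - 2 \le m - 1$). By Lemma~\ref{regular} we may assume $G$ is $\Delta$-regular with $\Delta \ge 4$. Fix a half-edge coloring $h : E(G^{\frac12}) \to [\Delta]$ from Lemma~\ref{L_half}. The target palette is $\{0\} \cup \{a_j : a \in [\Delta],\ j \in [m/2]\}$, which has $\frac m2 \Delta + 1 = \omega(\Gmn)$ colors.

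First I would color all branch vertices of $\Gmn$ with $0$; since distinct branch vertices are at distance $n > m$ in $\Gon$, they are non-adjacent in $\Gmn$, and no internal vertex receiving a nonzero color will conflict. Next, for each edge $uv \in E(G)$, assign the bubble $B_{uv} = ((uv)_1, \dots, (uv)_{m/2})$ at $u$ the colors $(h(e_{uv})_1, \dots, h(e_{uv})_{m/2})$, and symmetrically for the bubble at $v$; that is, the bubble inherits the ``index-stratified'' version of the color of its half-edge. The point of using a \emph{half-edge} coloring is that the two half-edges of $uv$ may carry different colors, which will be needed to separate the two bubbles sitting on the same superedge when $n$ is small. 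I would then check that this is proper on $\bigcup B_{uv}$: two vertices $(uv)_i$ and $(uw)_{i'}$ in bubbles at the same vertex $u$ lie at distance $i + i' \le m$, hence are adjacent, but they get colors $h(e_{uv})_i$ and $h(e_{uw})_{i'}$, and since $h(e_{uv}) \ne h(e_{uw})$ (the half-edge coloring is proper at $u$) these differ even when $i = i'$; bubbles on different superedges with no common branch vertex are too far apart to interact.

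The heart of the argument is coloring each middle part $M_{uv} = ((uv)_{m/2+1}, \dots, (uv)_{n - m/2 - 1})$, of length $\ell := n - m - 2$ with $0 \le \ell \le m-1$, so that the result is a proper coloring of all of $\Gmn$. A middle vertex $(uv)_i$ is adjacent to vertices within distance $m$ along the superedge $P_{uv}$: in one direction this reaches into $B_{uv}$ (at $u$), in the other into the reversed bubble at $v$, and also to the rest of $M_{uv}$; crucially it is \emph{not} adjacent to anything off the superedge $P_{uv}$, because reaching an internal vertex of another superedge would require passing through a branch vertex and would cost more than $m$ steps once we are at index $\ge m/2 + 1$. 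So coloring $M_{uv}$ is a local, one-dimensional problem: we must choose colors for $\ell$ consecutive vertices forming, together with the two flanking bubbles, a properly colored path-power $P^m$, where any window of $m+1$ consecutive vertices must be rainbow. Since we have $\frac m2\Delta \ge 2m$ nonzero colors available and each window sees at most $m+1 \le $ that many, a greedy / explicit assignment works; I would write down a concrete rule — e.g. extend the arithmetic-in-the-index pattern $a_1, a_2, \dots, a_{m/2}, b_1, \dots$ cyclically through a fixed ordering of the nonzero colors — and verify the window condition and the compatibility with both bubble endpoints. Here the evenness of $m$ is what makes the two bubbles meet cleanly with a middle part of the ``right'' parity; for $\Delta \ge 4$ there is enough slack that no case analysis on $n$ beyond the range $m+2 \le n \le 2m+1$ is needed.

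The main obstacle I expect is the boundary bookkeeping at the two ends of $M_{uv}$: the first few middle vertices are adjacent to late bubble vertices of $B_{uv}$ and the last few to late (reversed) bubble vertices at $v$, whose colors $h(e_{uv})_\bullet$ and $h(e_{vu})_\bullet$ are already fixed, and when $\ell$ is small ($n$ close to $m+2$) the two constrained ends overlap, so a middle vertex can simultaneously see both bubbles. One must then confirm that the total number of forbidden colors at such a vertex is still at most $\frac m2\Delta - 1 < \frac m2 \Delta$, which holds precisely because $\ell \le m-1$ forces the combined window to have at most $m+1$ vertices while the palette is strictly larger; choosing the explicit middle pattern to avoid reusing a color of an adjacent bubble vertex is the one place that needs a careful but short argument. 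With $\Delta = 3$ the slack disappears, which is exactly why that case is deferred to Section~\ref{S_Cubic}.
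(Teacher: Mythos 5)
Your setup (regular reduction, $m+2\le n\le 2m+1$ via Lemma~\ref{range}, color $0$ on branch vertices, bubbles colored by the index-stratified half-edge colors) coincides with the paper's, but the key step is built on a false claim: you assert that a middle vertex is not adjacent to anything off its own superedge. This is wrong for every even $m\ge 4$: the first middle vertex $(uv)_{m/2+1}$ is at distance $(m/2+1)+j$ in $\Gon$ from the bubble vertex $(uw)_j$ of any other superedge at $u$, and this is at most $m$ whenever $j\le m/2-1$, so it is adjacent in $\Gmn$ to all but the last vertex of every bubble at $u$ (and symmetrically the last middle vertex sees the bubbles at $v$). Hence the problem does not reduce to a rainbow-window condition on a single path power, and your proposed cyclic pattern $a_1,a_2,\dots,a_{m/2},b_1,\dots$ has no reason to avoid the colors already sitting on bubbles of neighboring superedges; indeed a forward-ordered tuple $a$ starting the middle part would collide with a bubble $B_{uw}$ carrying the same tuple at distance $m/2+2i\le m$. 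Your counting argument also inherits this error: once the cross-superedge neighbors are included, a middle vertex on a short superedge ($n=m+2$) has far more than $m+1$ colored neighbors, and only a sharper count of \emph{distinct} forbidden colors (which uses the tuple structure) leaves room.

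The paper closes exactly this hole with two ingredients your proposal lacks. First, the middle part of $P_{uv}$ is colored with color-tuples $a,b\in[\Delta]\setminus\{h(e_{uv}),h(e_{vu})\}$, which is precisely where $\Delta\ge 4$ is used. Second, these tuples are written in \emph{reversed} order ($\varphi(M_{uv}[1:k])=\overline{a}[1:k]$, $\varphi(M_{vu}[1:l])=\overline{b}[1:l]$), so that if a middle vertex $M_{uv}[i]$ shares its color $a_{m/2-i+1}$ with a bubble vertex $B_{uw}[m/2-i+1]$ at the common branch vertex $u$, their distance is $(m/2-i+1)+(m/2+i)=m+1>m$. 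Without an analogue of this reversal (or some other mechanism handling the bubble--middle interaction across superedges sharing a branch vertex), the construction you sketch cannot be verified to be proper, so the proposal as written has a genuine gap.
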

	
	\begin{proof}
	By Lemma~\ref{regular}, we can assume that $G$ is $\Delta$-regular graph, where $\Delta=\Delta(G)$. Also, it is sufficient to prove the claim for $m+2\le n\le 2m+1$ (Lemma~\ref{range}).
		By Theorem~\ref{omegaB}, $\omega(\Gmn)=\frac{m}{2} \Delta+1$. Since trivially $\chi(\Gmn)\geq \omega(\Gmn)$, we only need to construct a proper vertex coloring $\Gmn$ that uses $\frac{m}{2} \Delta+1$ colors.

Let $h: E(\Got) \rightarrow \{1,\dots,\Delta\}$, where $1=(1_1,\dots,1_{ \frac{m}{2}}), \dots, \allowbreak\Delta=(\Delta_1,\dots,\Delta_{\frac{m}{2}})$, be a half-edge coloring of $G$ whose existence is ensured by Lemma~\ref{L_half}. Then we define a vertex coloring $\varphi:\nobreak V(\Gmn)\rightarrow \{0,1_1,\dots,1_{\frac{m}{2}},\allowbreak2_1\dots,2_{\frac{m}{2}},\allowbreak\dots,\Delta_{1},\dots,\Delta_{\frac{m}{2}}\}$ as follows.	
	
	\begin{enumerate}
	 \item \textbf{branch vertices:} $\varphi(v)=0$ for every branch vertex $v$.
	  \item \textbf{bubbles:} $\varphi(B_{uv})=h(e_{uv})$. (Recall that this means $\varphi(B_{uv}[i])=h(e_{uv})[i] \ \forall i\in [\frac{m}{2}]$.)
	   \item \textbf{middle parts:} Consider a superedge $P_{uv}$.
	   Since $\Delta(G)\geq 4$, there exist two color-tuples $a$, $b \in [\Delta]\setminus \{h(e_{uv}),h(e_{vu})\}$. Let $k=\lceil\frac{1}{2}|M_{uv}|\rceil$ and $l=\lfloor\frac{1}{2}|M_{uv}|\rfloor$. Then we define  $\varphi(M_{uv}[1:k])=\overline{a}[1:k]$ and $\varphi(M_{vu}[1:l])=\overline{b}[1:l]$.
	   	  See Figure~\ref{even}.
	\end{enumerate}
						
	\begin{figure}
  \begin{center}
  \subfloat[Coloring $h$.]{\label{even1}
    \includegraphics{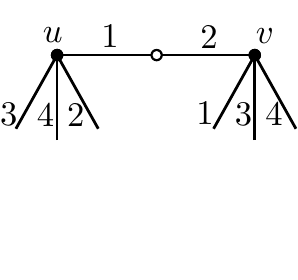}}
     \hspace{1cm}
      \subfloat[The corresponding coloring $\varphi$ of $\Gmn$.]{\label{even2}
    \includegraphics{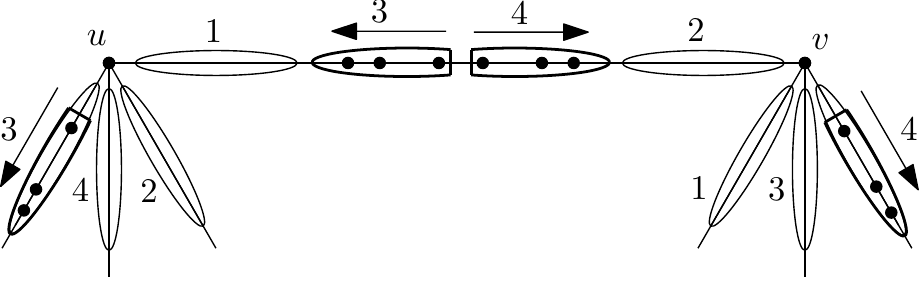}}
      \caption{Using half-edge coloring of $G$ to color $\Gmn$.}\label{even}
  \end{center}
\end{figure}	
	
	We show that the coloring $\varphi$ is proper. We need to check that the distance between any two vertices $x$, $y$ with the same color is greater then $m$ in the subgraph $H:=G^{\frac{1}{n}}$ of $\Gmn$. This is true if $\varphi(x)=\varphi(y)=0$ since then $x$ and $y$ are branch vertices. Let now $\varphi(x)=\varphi(y)\neq 0$.
	
\begin{enumerate}
	 	 \item $x\in B_{uv}$ and $y \in B_{u'v'}$: First, $u v\neq u' v'$ since the colors of vertices of each bubble are pairwise distinct.
	  Next, neither $u\neq u'$ nor $v\neq v'$ since $h$ is proper and thus $B_{uv}$ and $B_{u'v'}$ cannot be at the same vertex. Also, $B_{uv}$ and $B_{u'v'}$ are not on the same superedge, so either $u\neq v'$ or $v\neq u'$. If $u\neq v'$ and $v\neq u'$, then $uv$ and $u'v'$ are two vertex-disjoint edges in $G$ and so $d_{H}(x,y)\geq n+2>m$. If $u\neq v'$ and $v=u'$, then $d_{H}(x,y)=d_{H}(u,v)=n>m$ since $d_{H}(u,x)=d_{H}(u',y)$ by definition of $\varphi$. The case $u= v'$ and $v\neq u'$ is analogous.
	
	 \item $x\in M_{uv}$ and $y \in M_{u'v'}$: As in the previous case, $uv\neq u'v'$. But then $d_{H}(x,y)$ since any two vertices from two different middle parts are in distance at least $m+2$ in $\Gon$.
	
	 \item $x\in B_{uv}$ and $y\in M_{u'v'}$: If $uv$ and $u'v'$ are vertex-disjoint edges in $G$, then again $d_H(x,y)\ge n+2>m$. So we may assume that $u=u'$. Then $v\neq v'$ since $B_{uv}$ and $M_{u'v'}$ do not belong to the same superedge by definition of $\varphi$. Let $i$ be such that $y=M_{uv}[i]$. Since $\varphi(x)=\varphi(y)$, we have by definition of $\varphi$ that $x=B_{uv}[\frac{m}{2}-i+1]$. It follows that $d_H(x,u)=\frac{m}{2}-i+1$ and $d_H(u',y)=\frac{m}{2}+i$, which together gives $d_H(x,y)=\frac{m}{2}-i+1+\frac{m}{2}+i=m+1$.
\end{enumerate}	
	\end{proof}
		
\section{Theorem~1 for cubic graphs} \label{S_Cubic}

 For a cubic graph $G$, let $h:E(\Got)\rightarrow [3]$ be a proper half-edge coloring. A cycle in $G$ that uses only two of the colors $1,2,3$ on its half-edges is called a \emph{bad cycle} in $(G,h)$. The coloring $h$ is called a \emph{good half-edge coloring} if there is no bad cycle in $(G,h)$.
	
	\begin{lem}\label{lem1}
There exists a good half-edge coloring for every cubic graph $G$.
	\end{lem}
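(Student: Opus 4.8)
The plan is to build a good half-edge coloring directly from an ordinary proper edge coloring of $G$ (or of a cubic graph containing $G$, via Lemma~\ref{regular}), and then to fix up the bad cycles one at a time by a local recoloring that cannot create new bad cycles. First I would recall that a cubic graph $G$ has a proper edge coloring with at most $4$ colors by Vizing's theorem; call these colors $\{1,2,3,4\}$. From such an edge coloring we obtain a half-edge coloring by assigning to both halves $e_{uv}$ and $e_{vu}$ of each edge $e$ the color of $e$ — except that, since half-edge colorings only need $\Delta=3$ colors, I would recolor the halves of the (few) color-$4$ edges: at each endpoint $u$ of a color-$4$ edge, the two other edges at $u$ have two distinct colors among $\{1,2,3\}$, so the third color is free for the half-edge $e_{uv}$ at $u$. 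This yields a proper half-edge coloring $h:E(G^{\frac 12})\to[3]$.

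Next I would analyze the structure of bad cycles. A cycle $C$ is bad in $(G,h)$ if only two colors, say $\{\alpha,\beta\}$, appear on its $2|C|$ half-edges. Since $h$ is proper, at each vertex $v$ of $C$ the two half-edges of $C$ incident to $v$ together with the half-edge of the third edge $f$ at $v$ must carry all three colors $\{1,2,3\}$; hence the two half-edges of $C$ at $v$ carry exactly $\{\alpha,\beta\}$ and the half-edge $f_v$ at $v$ carries the third color $\gamma$. The key observation is that the half-edges of $C$ then alternate $\alpha,\beta$ as we walk around $C$, which forces $|C|$ to be even and pins down the coloring on $C$ up to the choice of which half-edge is $\alpha$. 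In particular every vertex on a bad cycle $C$ has its non-$C$ half-edge colored $\gamma=\gamma(C)$, the unique color not in $\{\alpha,\beta\}$.

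The fix-up step: pick any bad cycle $C$ with color pair $\{\alpha,\beta\}$ and missing color $\gamma$. Choose one edge $e=uv$ of $C$ and recolor just the two half-edges on $e$: set $h(e_{uv})=h(e_{vu})=\gamma$. This keeps $h$ proper, because at $u$ the half-edges now present are $\gamma$ (new), the other $C$-half-edge at $u$ (colored $\alpha$ or $\beta$), and the non-$C$ half-edge at $u$ (colored $\gamma$)— wait, that collides. So instead I would recolor asymmetrically, swapping along a short path, or — cleaner — I would recolor a single half-edge $e_{uv}$ from its current value to $\gamma$ only if the non-$C$ half-edge at $u$ is not $\gamma$; the structural claim above says it always is $\gamma$, so the honest move is a Kempe-type swap: take the third edge $f=uw$ at $u$ (with $f_u$ colored $\gamma$), and swap colors $\gamma\leftrightarrow(\text{color of }e_{uv})$ along the maximal $\{\gamma,\cdot\}$-alternating trail of half-edges starting at $u$; this destroys $C$ as a bad cycle (its half-edge set now contains a $\gamma$) while any newly monochromatic-in-two-colors cycle would have to route through the altered trail, and one checks the trail's endpoints prevent a closed two-colored walk. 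I would then iterate, using as a potential function the number of bad cycles (or the number of $\gamma$-deficient vertices), and argue it strictly decreases.

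The main obstacle I expect is precisely making the fix-up step not create new bad cycles: a single Kempe swap can merge or split two-colored cycles, so the argument needs a monovariant that genuinely drops — for instance, processing bad cycles in order of length, or showing that after the swap the affected vertices each see all three colors among their half-edges "locally" in a way that blocks new bad cycles. An alternative route that sidesteps cycle-chasing entirely is to invoke a nowhere-zero flow / edge coloring with extra structure, or to use that cubic graphs have a proper $3$-edge-coloring exactly when they are bridgeless (König/Vizing for the bridged case), and handle bridges separately; but I would expect the cleanest write-up to be the direct Kempe-chain argument with length as the monovariant.
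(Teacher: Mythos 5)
Your overall plan---start from a proper half-edge coloring and destroy bad cycles one at a time by a local recoloring---is the same as the paper's, but the decisive step is missing, and you acknowledge it yourself: you never exhibit a recoloring that removes a bad cycle while provably not creating new ones, drifting instead to an unbounded Kempe-type swap along an alternating trail whose merge/split behaviour you admit you cannot control, with no monovariant actually established. The paper's fix is far more local than a Kempe chain: for a bad cycle $C_k$ whose half-edges use $\{a,b\}$, pick one vertex $u\in V(C_k)$, let $v$ be its neighbour off the cycle (so $h(e_{uv})=c$), choose the cycle-neighbour $w$ of $u$ with $h(e_{uw})\neq h(e_{vu})$ (possible since the two cycle half-edges at $u$ have the two distinct colors $a,b$), and swap the colors of just the two half-edges $e_{uv}$ and $e_{uw}$. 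The choice of $w$ keeps the coloring proper, $C_k$ now carries the color $c$, and a short local check kills any would-be new bad cycle: one through $uv$ would have to use colors $a,b$ and hence contain the path $C_k\setminus\{uw\}$, but it gets stuck at $w$; one through $uw$ would have to use $c$, but no other edge at $u$ has $c$ on a half-edge. Since the bad cycles are pairwise edge-disjoint, induction on their number finishes the proof. This single-vertex swap is exactly the idea your write-up is missing.

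There are also factual slips worth flagging. Assigning both halves of an edge the color of that edge is not a proper half-edge coloring: the two halves of an edge are adjacent in $G^{\frac{1}{2}}$ (they share the subdivision vertex), so they must receive different colors---the right move is simply to invoke Lemma~\ref{L_half}. Your claim that bad cycles must be even is false: a triangle whose half-edges are colored $a,b$; $a,b$; $a,b$ cyclically is properly colored and bad, so no parity restriction is available. Finally, the fallback ``cubic graphs are $3$-edge-colorable exactly when bridgeless'' is wrong (the Petersen graph is bridgeless and not $3$-edge-colorable); only the bipartite case follows from K\"{o}nig, so that alternative route does not exist.
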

	\begin{proof}
	We consider an arbitrary half-edge coloring $h: E(\Gon)\rightarrow [3]$ of $G$. Let $C_1,\dots, C_k$ be bad cycles of $(G,h)$. Observe that the cycles $C_1, \dots, C_k$ are pairwise edge-disjoint.
		We prove that we can eliminate the bad cycles $C_1,\dots, C_k$ by induction on $k$. If $k=0$, then there is nothing to prove. So suppose that $k\geq 1$.

	Consider the cycle $C_k$ with its half-edges colored with colors $a,b\in[3]$. Then every half-edge incident with a vertex of $C_k$ has color $c\in [3]\setminus\{a,b\}$. 	
	For an arbitrary vertex $u$ of $C_k$, let $v$ be the neighbor of $u$ such that $v\not\in V(C_k)$. So we have $h(e_{uv})=c$. Let $w$ be the neighbor of $u$ in $C_k$ such that $h(e_{uw})\neq h(e_{vu})$. Then we define a half-edge coloring $h'$ of $G$ by $h'(e_{uv})=h(e_{uw})$, $h'(e_{uw})=h(e_{uv})$, and $h'(e_{xy})=h(e_{xy})$ for the remaining half-edges $e_{xy}$ of $G$ (see Figure~\ref{cubic1}). Now $C_k$ uses all the three colors $1,2,3$ and the new coloring $h'$ is again a proper half-edge coloring.

	We show that no new bad cycle arises. Suppose it does. Let $C_k'$ be a bad cycle in $(G,h')$ that was not a bad cycle in $(G,h)$. Then either the edge $uv$ or the edge $uw$ is in $C_k'$. Suppose $uv\in E(C_k')$ first. Then $C_k'$ uses colors $a$ and $b$ on its half-edges, and thus contains the path $P_k=C_k\setminus\{uw\}$. But the vertex $w$ of $P_k$ is adjacent to only one edge that uses colors $a$ and $b$ on its half-edges (since one of the colors $a,b$ was changed to $c$ on $uw$), a contradiction. The case when $uw\in E(C_k')$ also leads to a contradiction since no other edge than $uw$ incident with $u$ has color $c$ on its half-edges.
	\end{proof}

\begin{figure}
  \begin{center}
  \subfloat[Switching the colors on $e_{uv}$ and $e_{uw}$.]{\label{cubic1}
    \includegraphics{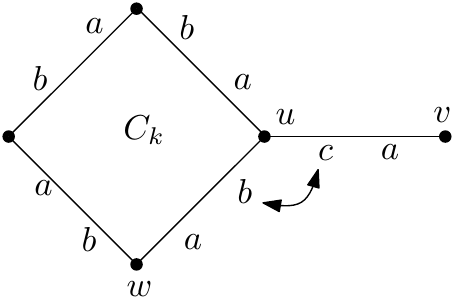}}
     \hspace{2cm}
      \subfloat[Orienting $c$-half-edges.]{\label{cubic2}
    \includegraphics{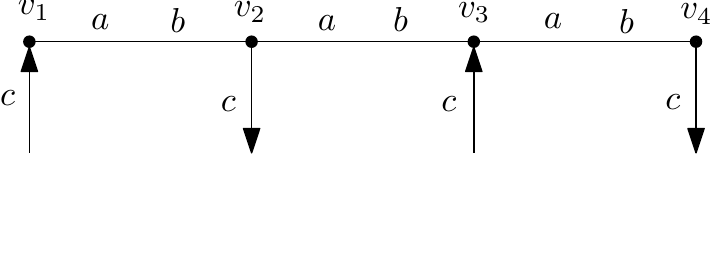}}
      \caption{}\label{cubic}
  \end{center}
\end{figure}
	
For a half-edge coloring $h$, an \emph{$a$-half-edge} is a half-edge colored with the color $a$ by $h$, and an \emph{$ab$-edge} is an edge with the colors $a,b$ used on its half-edges.
	
An \emph{orientation} $\overrightarrow{G}$ of a graph $G$ is obtained by assigning a direction to each edge. We say that a half-edge $e_{uv}$ is \emph{oriented inwards} in if the head of the corresponding edge in the orientation of $\Got$ is a branch vertex of $G$. Otherwise we say that $e_{uv}$ is \emph{oriented outwards}.

	\begin{lem}\label{lem2}
 For every good half-edge coloring $h$ of $G$ there exists an orientation of $\Got$ such that for every edge $uv\in E(G)$, if $u'\in N_G(u)\setminus \{v\}$ and $v'\in N_G(v)\setminus \{u\}$ with $h(e_{uu'})=h(e_{vv'})$, then one of the half-edges $e_{uu'}$, $e_{vv'}$ is oriented inwards and the other one outwards.
	\end{lem}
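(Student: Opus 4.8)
The plan is to reformulate the existence of the orientation as a $2$-coloring problem and to show that the resulting constraint graph is bipartite — in fact a disjoint union of forests — the only hypothesis needed being the absence of bad cycles.

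First I would note that specifying an orientation of $\Got$ amounts to choosing, independently for each half-edge $e_{uv}$, whether it is oriented inwards or outwards: the edge of $\Got$ corresponding to $e_{uv}$ joins the branch vertex $u$ to the middle vertex of the superedge $P_{uv}$, and directing it towards $u$ makes $e_{uv}$ inward while directing it away from $u$ makes it outward. So encode an orientation as a map $s$ from the set of half-edges of $G$ to $\{\mathrm{in},\mathrm{out}\}$. The condition in the lemma then reads: for every $uv\in E(G)$ and all $u'\in N_G(u)\setminus\{v\}$, $v'\in N_G(v)\setminus\{u\}$ with $h(e_{uu'})=h(e_{vv'})$, one has $s(e_{uu'})\ne s(e_{vv'})$. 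Let $\Gamma$ be the graph whose vertices are the half-edges of $G$ and whose edges are exactly these ``must differ'' pairs; then a valid orientation exists if and only if $\Gamma$ is properly $2$-colorable, i.e. bipartite.

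Next I would analyze the structure of $\Gamma$. Every ``must differ'' pair consists of two half-edges of the same color, so $\Gamma$ is the disjoint union, over $c\in[3]$, of its restriction $\Gamma_c$ to the $c$-half-edges; hence it suffices to show each $\Gamma_c$ is bipartite. Since $h$ is a proper half-edge coloring of the cubic graph $G$, each branch vertex is incident to exactly one $c$-half-edge, so $V(\Gamma_c)$ is naturally identified with $V(G)$. I claim the $c$-half-edges at $u$ and at $v$ form a ``must differ'' pair if and only if $uv\in E(G)$ and neither $e_{uv}$ nor $e_{vu}$ has color $c$: if $uv\in E(G)$ with $c\notin\{h(e_{uv}),h(e_{vu})\}$, then the $c$-half-edge at $u$ is $e_{uu'}$ for some $u'\ne v$ and the $c$-half-edge at $v$ is $e_{vv'}$ for some $v'\ne u$, so the lemma's hypothesis applies; conversely, if $u'\ne v$ and $v'\ne u$ then properness of $h$ at $u$ and at $v$ forces $h(e_{uv})\ne c$ and $h(e_{vu})\ne c$. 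Therefore $\Gamma_c$ is isomorphic to the spanning subgraph $H_c\subseteq G$ consisting of the edges $uv$ such that neither half-edge of $P_{uv}$ is colored $c$.

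Finally, a cycle of $H_c$ is also a cycle of $G$, and all of its half-edges avoid the color $c$, hence use only the two colors of $[3]\setminus\{c\}$ — that is, it is a bad cycle in $(G,h)$. Since $h$ is a good half-edge coloring, there is no such cycle, so $H_c$, and with it $\Gamma_c$, is a forest, hence bipartite. Thus $\Gamma$ is bipartite; fixing a proper $2$-coloring of $\Gamma$ and reading off ``in''/``out'' yields the required orientation. The one step that needs care — and the place where goodness of $h$ enters — is the identification $\Gamma_c\cong H_c$: once one sees that the constraints split by color and that, for each color, the constraint graph is literally the subgraph $H_c$ of $G$ whose cycles are exactly the bad cycles, the rest is routine bookkeeping.
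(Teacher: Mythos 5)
Your proof is correct and takes essentially the same route as the paper: your per-color constraint graph $\Gamma_c\cong H_c$ is exactly the paper's graph $G_{ab}$ of $ab$-edges (with $\{a,b\}=[3]\setminus\{c\}$), goodness of $h$ makes it acyclic in both arguments, and your proper $2$-coloring of $\Gamma_c$ is precisely the paper's alternating inwards/outwards orientation of the $c$-half-edges along the components of $G_{ab}$. The constraint-graph formulation is just a cleaner packaging of the same decomposition-by-color idea.
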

	
	\begin{proof}
For $a,b\in[3]$, $a\neq b$, let $G_{ab}$ be the graph induced by $ab$-edges of $G$. Since there are no bad cycles in $(G,h)$, the graph $G_{ab}$ is a disjoint union of paths $P_1,\dots, P_l$. Every half-edge adjacent to a vertex of $G_{ab}$ has color $c \in [3]\setminus \{a,b\}$. We orient all these neighboring $c$-half-edges subsequently for each $P_1,\dots, P_l$. For $P_i=v_1 v_2 \dots v_k$, we orient every $c$-half-edge incident with $v_i$ with $i$ odd inwards and the rest outwards (so that the directions alternate along the path), as shown on Figure~\ref{cubic2}. We repeat the process for all graphs $G_{12}$, $G_{13}$, $G_{23}$. Finally, we orient the remaining half-edges arbitrarily.
	\end{proof}

	\begin{thm}\label{meven}
	If $m$ is even and $G$ is a graph with $\Delta(G)=3$, then $\chi(\Gmn)=\omega(\Gmn)$.
	\end{thm}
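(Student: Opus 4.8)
By Lemma~\ref{regular} we may assume $G$ is cubic, and by Lemma~\ref{range} it suffices to treat $m+2\le n\le 2m+1$. By Theorem~\ref{omegaB} we must produce a proper vertex coloring of $\Gmn$ with $\tfrac{m}{2}\cdot 3+1=\tfrac{3m}{2}+1$ colors. The plan is to mimic the non-cubic construction of Theorem~\ref{miseven}: use Lemma~\ref{lem1} to fix a \emph{good} half-edge coloring $h:E(\Got)\to[3]$, introduce $\tfrac m2$ shades $a_1,\dots,a_{m/2}$ for each color $a\in[3]$, color every branch vertex $0$, and color each bubble $B_{uv}$ by the shaded tuple $h(e_{uv})$. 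The obstruction, of course, is the middle parts: when $\Delta=3$ there may be no color in $[3]\setminus\{h(e_{uv}),h(e_{vu})\}$ other than the one forced, so we cannot freely pick two fresh tuples $a,b$ as in Theorem~\ref{miseven}. This is exactly where ``good'' and the orientation of Lemma~\ref{lem2} enter.

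First I would set up the middle-part coloring carefully. On a superedge $P_{uv}$, the two bubbles use the shaded tuples of $h(e_{uv})$ and $h(e_{vu})$; I want to color $M_{uv}$ using shades of a color distinct from $h(e_{uv})$, and $M_{vu}$ using shades of a color distinct from $h(e_{vu})$, working inward from each end, reversed so that the shade adjacent to the bubble continues the pattern and conflicts only at distance $>m$ with its own bubble (as in case~3 of Theorem~\ref{miseven}, which gives distance exactly $m+1$ and so is fine). If $h(e_{uv})\ne h(e_{vu})$ there are two choices for each end and one can route around conflicts; the genuinely tight situation is when $h(e_{uv})=h(e_{vu})=c$, i.e.\ a monochromatic superedge, where both middle halves must avoid $c$ and hence must use the \emph{same} remaining pair $\{a,b\}$, creating the danger that $M_{uv}$ and $M_{vu}$ — or two middle parts meeting at a common branch vertex — clash. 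Here I would use the orientation from Lemma~\ref{lem2}: orient each half-edge, and on $M_{uv}$ place the pattern ``$a$ then $b$ then $a$ then $\dots$'' or its reverse according to whether $e_{uv}$ is oriented inwards or outwards, so that the two halves of any superedge carry opposite phases and the shades meeting at the midpoint differ.

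The heart of the verification is to check, exactly as in Theorem~\ref{miseven}, that any two like-colored vertices $x,y$ satisfy $d_H(x,y)>m$ in $H=\Gon$. The branch-branch and bubble-bubble cases are identical to before; the bubble-middle case gives distance $m+1$ as computed there. The new work is middle-middle: when $x\in M_{uv}$, $y\in M_{u'v'}$ share a shade, either $uv,u'v'$ are vertex-disjoint (distance $\ge n+2>m$, since $n\ge m+2$), or they meet at a vertex, say $u=u'$ with $v\ne v'$. Then $e_{uv}$ and $e_{uv'}$ are distinct half-edges at $u$; I would argue that either they received different colors by $h$ (so their middle parts use disjoint shade-sets and cannot clash), or $h(e_{uv})=h(e_{uv'})$, in which case the hypothesis of Lemma~\ref{lem2} with the roles played by the neighbors forces the two half-edges to have opposite orientations, hence the middle patterns on $P_{uv}$ and $P_{uv'}$ start with opposite phases at $u$, so the shade at distance $i$ from $u$ on one side differs from the shade at distance $i$ from $u$ on the other. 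Combined with the fact that a given shade occurs at most every other position within a single middle part, this rules out a same-shade pair at distance $\le m$. The case $u=v'$ (or symmetric) is handled the same way after reversing one tuple.

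The step I expect to be the main obstacle is making the bookkeeping of phases and the appeal to Lemma~\ref{lem2} precise: Lemma~\ref{lem2} controls the relative orientation of two equal-colored $c$-half-edges at the \emph{ends of a common $ab$-edge}, whereas what I need above is control at a \emph{common branch vertex}. One has to unwind the definitions so that ``the two half-edges at $u$ pointing toward $v$ and $v'$, if equicolored, alternate'' follows from the alternating-along-$P_i$ construction in the proof of Lemma~\ref{lem2}; this requires checking that both $e_{uv}$ and $e_{uv'}$ were oriented during the same pass (the pass for the color class containing both bubble-colors at $u$), which is where goodness of $h$ — no bad cycle, hence $G_{ab}$ is a union of paths with a well-defined alternation — is used in an essential way.
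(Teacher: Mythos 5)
There is a genuine gap: your single construction cannot cover the whole range $m+2\le n\le 2m+1$, and the paper in fact splits this range in two. Your scheme (good half-edge coloring from Lemma~\ref{lem1}, bubbles colored by shaded tuples of $h$, each middle half colored by shades of one color, phases controlled by the orientation of Lemma~\ref{lem2}) is essentially the paper's construction for the \emph{first} range $m+2\le n\le \frac{3m}{2}+1$, where $|M_{uv}|\le\frac m2$. But note first that $h(e_{uv})\ne h(e_{vu})$ always: the two half-edges of an edge share the subdivision vertex in $\Got$, so a proper half-edge coloring never produces your ``monochromatic superedge''; the tight situation is rather that $[3]\setminus\{h(e_{uv}),h(e_{vu})\}$ consists of a \emph{single} color $c$, so there is no freedom to ``route around conflicts''. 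In the first range every vertex of $M_{uv}$ is within distance $m$ of every vertex of both bubbles on $P_{uv}$ (the distance from $M_{uv}[i]$ to $B_{vu}[j]$ is $n-\frac m2-i-j\le m-1$ when $n\le\frac{3m}{2}+1$), so the whole middle part must use shades of $c$ only, and the orientation lemma decides at which end to anchor them — this is exactly the paper's Case~1.

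Where your plan breaks down is the second range $\frac{3m}{2}+2\le n\le 2m+1$: there $|M_{uv}|=n-m-1>\frac m2$, so the middle part has more vertices than the $\frac m2$ shades of the only color missing from both bubbles, and any repetition of a shade inside one middle part occurs at distance at most $|M_{uv}|-1\le m-1$. Coloring each middle half with shades of ``a color distinct from $h(e_{uv})$'' forces you either back onto $c$ on both halves (too few shades) or onto $h(e_{vu})$, which clashes with the far bubble for most positions (the distance $n-\frac m2-i-j$ drops to $\le m$ as soon as $i+j\ge n-\frac{3m}{2}$). The paper resolves this with a genuinely different construction in its Case~2: a proper $4$-edge-coloring $g:E(G)\to\{1,2,3,*\}$ from Vizing's theorem with the number of $*$-edges minimized, orientations of the color classes $G_a$, bubbles on an ordinary edge carrying the same tuple in the same direction, middle parts of $*$-edges split into \emph{three} blocks, and a final ``switching edge'' adjustment. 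Your proposal contains no substitute for this machinery, so as written it proves the theorem only for $n\le\frac{3m}{2}+1$; by Lemma~\ref{range} that is not enough, since the inductive reduction needs all $n$ up to $2m+1$.
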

	
	\begin{proof}
By Lemma~\ref{regular}, we can assume that $G$ is cubic. By Theorem~\ref{omegaB} we have that $\omega(\Gmn)=\frac{m}{2}\Delta(G) +1=\frac{3 m}{2}+1$. So, we only need to find a coloring of $\Gmn$ that uses $\frac{3 m}{2}+1$ colors. Also, we only need to consider $\Gmn$ with $n=m+2,\dots, 2m+1$ by Lemma~\ref{range}. We split the range $m+2,\dots, 2m+1$ into two parts.
	
	\textbf{Case 1}: $m+2\leq n\leq 3\frac{m}{2}+1$ (first range).
	Let $h$ be a good half-edge coloring of $G$, guaranteed by Lemma~\ref{lem1}, and let $\overrightarrow{G}$ be the orientation of $G$ given by Lemma~\ref{lem2} applied on $G$ and $h$. Then we define a vertex coloring $\varphi:V(\Gmn)\rightarrow \{0,1_1,\dots, 1_{\frac{m}{2}},2_1,\dots, 2_{\frac{m}{2}}, 3_1,\dots, 3_{\frac{m}{2}}\}$ as follows.
	
	\begin{enumerate}
	 \item \textbf{branch vertices:} $\varphi(v)=0$ for every branch vertex $v\in V(\Gmn)$.
	  \item \textbf{bubbles:}  Let $B_{uv}$ be a bubble of $\Gmn$. If the underlying half-edge $e_{uv}$ of $B_{uv}$ is oriented outwards, then let $\varphi(B_{uv})=h(e_{uv})$. Otherwise, let $\varphi(B_{uv})=\overline{h(e_{uv})}$.
	
	   \item \textbf{middle parts:} Let $k=|M_{uv}|$. Since $n$ is in the first range, $1\leq k \leq \frac{m}{2}$. Let $c\in [3]\setminus \{\varphi(B_{uv}), \varphi(B_{vu})\}$. If the $c$-half-edge oriented inwards is adjacent to $u$, then we define $\varphi(M_{uv})[1:k]=c[1:k]$ (as shown on Figure~\ref{cubic_first}). Otherwise we let $\varphi(M_{vu})[1:k]=c[1:k]$.
	\end{enumerate}
	
	 A routine check (similar as the one given in the proof of Theorem~\ref{miseven}) shows that vertices of the same color are of distance at least $m+1$ in $G^{\frac{1}{n}}$.
	
	\begin{figure}
	\begin{center}
	\subfloat[An orientation of half-edges of $G$.]{%
\label{cubic_first1}\includegraphics{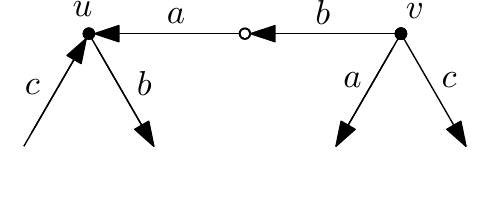}}
	\hspace{1cm}
	\subfloat[Corresponding coloring of $\Gmn$.]{%
\label{cubic_first2}\includegraphics{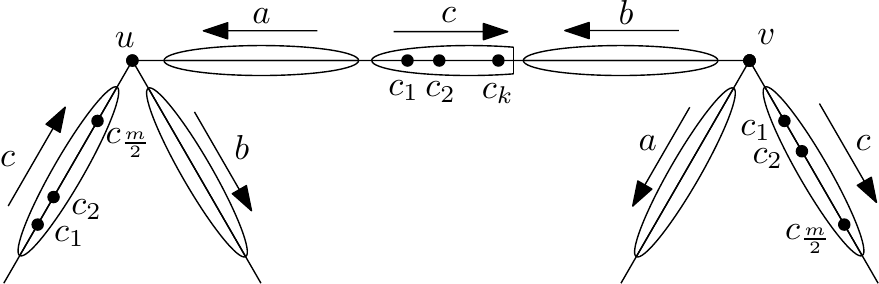}}
	\end{center}
	\caption{First range.}
	\label{cubic_first}
	\end{figure}
	
	\textbf{Case 2}: $3 \frac{m}{2}+2\leq n\leq 2m$ (second range).
	
 By Vizing's theorem, $G$ is $4$-edge-colorable. Let $g:E(G)\rightarrow \{1,2,3,*\}$ be a proper edge coloring with the smallest number of edges colored $*$. For $x\in \{1,2,3,*\}$, we call an edge that has color $x$ an \emph{$x$-edge}. For $a\in\{1,2,3\}$, let $G_{a}$ be the subgraph of $G$ induced by $a$-edges and those $*$-edges whose both endpoints are incident to an $a$-edge. Observe that $G_a$ is a disjoint union of paths and cycles. So, we can orient the edges in each component in the same direction (i.e. so that the indegree as well as the outdegree of each vertex is at most $1$ in $G_a$.)
Note that there is no conflict in orienting the edge since $G_{1}$, $G_{2}$, $G_3$ are edge-disjoint. Indeed, suppose that two of them, say $G_1$, $G_2$, share an edge $e$. Then $e$ is a $*$-edge, and by definition of $G_a$, both ends of $e$ are adjacent to one $1$-edge and one $2$-edge. But then we can color $e$ with the remaining color $3$, a contradicition with minimality of the number of $*$-edges.
On the other hand, every $*$-edge is oriented since it is adjacent to four edges, out of which two (non-adjacent) edges have to have the same color.

The construction of a proper vertex coloring $\varphi:V(\Gmn)\rightarrow \{0,1_1,\dots, 1_{\frac{m}{2}},\allowbreak 2_1,\dots, 2_{\frac{m}{2}}, \allowbreak 3_1,\dots, 3_{\frac{m}{2}}\}$ is more complicated that usual. Whereas both bubbles on an $a$-edge receive the same color-tuple $a$, the color-tuples on the two bubbles on a $*$-edge differ. Also, the coloring of middle parts for $1,2,3$-edges and $*$-edges i not be the same. In the first case we split each $M_{uv}$ into two (almost) equal parts, but in case of $*$-edges we split $M_{uv}$ into three non-equal parts.
The construction of $\varphi$ is depicted on Figure~\ref{F_cubic2}.

\begin{enumerate}
 \item \textbf{branch vertices:} $\varphi(v)=0$ for every branch vertex $v$.

\item \textbf{bubbles on $1,2,3$-edges}:

Let $uv\in E(G)$ that is oriented from $u$ to $v$ in $\overrightarrow{G}$. Then let $\varphi(B_{uv})=g(uv)$ and $\varphi(B_{vu})=\overline{g(uv)}$ (so the bubbles on one edge are oriented in the same direction).

\item \textbf{middle parts on $*$-edges}:

\noindent
\textbf{Case 1}: $|M_{uv}|-\frac{m}{2}$ is even.
Let $uv$ be a $*$-edge oriented from $u$ to $v$ in $\overrightarrow{G}$, and let $l=\frac{1}{2}(|M_{uv}|-\frac{m}{2})$. We split $M_{uv}$ into three parts $M_{uv}[1:l]$, $M_{uv}[l+1:l+\frac{m}{2}]$, and $M_{uv}[l+\frac{m}{2}+1: 2l+\frac{m}{2}]$.

Every $*$-edge is adjacent to at least one edge of each color $1$, $2$, $3$ (minimality). In particular, there is an $a$-edge adjacent to both $u$ and $v$ for some $a\in [3]$. Next, there is a $b$-edge and $c$-edge adjacent to $u$ and $v$, respectively, for $b,c\in [3]$ such that $a\neq b\neq c$.

First, we color the central part $M_{uv}[l+1:l+\frac{m}{2}]$ by letting $\varphi(M_{uv}[l+1:l+\frac{m}{2}])=a$.

Second, we color the remaining parts $M_{uv}[1:l]$ and $M_{uv}[l+\frac{m}{2}+1: 2l+\frac{m}{2}]$. Since $M_{uv}[l+\frac{m}{2}+1: 2l+\frac{m}{2}]=\overline{M_{vu}[1:l]}$, it suffices to show the coloring of $M_{uv}[1:l]$, the coloring of $M_{vu}[1:l]$ is analogous. If the $b$-edge adjacent to $u$ is oriented inwards, then we set $\varphi(M_{uv}[1:l])=b[1:l]$. Otherwise we define $\varphi(M_{uv}[1:l])=\overline{b}[1:l]$.
	
\noindent
\textbf{Case 2}: $|M_{uv}|-\frac{m}{2}$ is odd.	
We use the coloring defined in Case~1 on all but one vertex $(uv)_{m+1}$ of $P_{uv}$, and then we color $(uv)_{m+1}$ separately. So, $\varphi$ is defined as above, but for $l=\frac{1}{2}(|M_{uv}|-\frac{m}{2}-1)$ and the three parts $M_{uv}[1:l]$, $M_{uv}[l+1:l+\frac{m}{2}+1]\setminus (uv)_{m+1}$, and $M_{uv}[l+\frac{m}{2}+2: 2l+\frac{m}{2}+1]$.
It remains to color $(uv)_{m+1}$ on each $P_{uv}$.

 For any $a\in\{1,2,3\}$, an $a$-edge $uv$ is called a \emph{switching edge} if $uv$ is adjacent to $*$-edges $uu'$ and $vv'$ such that both $B_{u'u}$ and $B_{v'v}$ use the same color-tuple $a$. Then $H$ is a subgraph of $G$ induced by all switching edges and $*$-edges  adjacent to at least one switching edge. Every component $D$ of $H$ is either a path or an even cycle, so we can orient the edges in $D$ in the same direction. We refer to the resulting oriented graph obtained from $H$ by orienting each component as $\overrightarrow{H}$.

Now we let $\varphi((uv)_{m+1})=0$ for every $*$-edge $\overrightarrow{uv}\in E(\overrightarrow{H_a})$. The whole graph $G$ is colored, but we have a conflict on vertices colored with $0$. We do the following switching.  Let $\overrightarrow{uv}\in\overrightarrow{H}$ be an oriented edge such that $uv$ is an $a$-edge, where $a\in\{1,2,3\}$.  Then we let $\varphi((uv)_{\frac{m}{2}-2k})=0$ and $\varphi(u)=a_{\frac{m}{2}-2k}$.

\item \textbf{bubbles on $*$-edges}:

We adopt the notation from the previous part. Let $c\in [3]$ be the color missing around $u$. If the $c$-edge adjacent to $v$ is oriented inwards, then $\varphi(B_{uv}):=c$, otherwise $\varphi(B_{uv}):=\overline{c}$.

\item \textbf{middle parts on $1,2,3$-edges}:

We consider an $a$-edge $uv$, where $a\in [3]$. Let
$B_{uu'}$ and $B_{vv'}$ be bubbles adjacent to $u$ and $v$ that use different color-tuples. Then  $\varphi(M_{uv}[1:l]):=\varphi(\overline{B_{uu'}})[1:l]$, and $\varphi(M_{vu}[1:l]):=\varphi(\overline{B_{vv'}})[1:l]$.
\end{enumerate}

		\begin{figure}
	\begin{center}
	\subfloat[An orientation of $G$ with the edges colored by $g$.]{
\label{range1}\includegraphics[width=4cm]{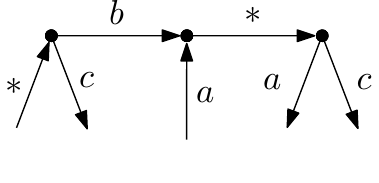}}
	\hspace{0cm}
	\subfloat[Coloring $\varphi$ of $\Gmn$. The numbers $2,3,4,5$ agree with the numbering of the steps in the proof.]{%
\label{range2}\includegraphics[width=\textwidth]{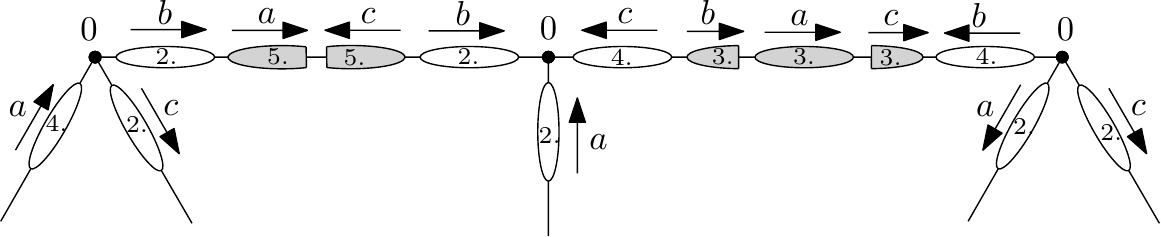}}
	\end{center}
	\caption{Second range.}
	\label{F_cubic2}
	\end{figure}	
			
	\end{proof}		
	\section{Counterexample and Theorem~2} \label{S_Odd}
	We begin this section by proving that Conjecture~\ref{conj1} is not true for $G=C_3 \Box P_2$ if $m=3$ and $n=5$. Before presenting the proof we need the following lemma.
	
	\begin{lem}\label{l_crust}
	Let $m$ be odd. If $\varphi$ is a proper vertex coloring of $\Gmn$ with $\omega(\Gmn)$ colors, then all vertices belonging to the same crust have the same color.
	\end{lem}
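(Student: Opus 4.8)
\textbf{Proof plan for Lemma~\ref{l_crust}.}
The plan is to count, for a fixed vertex $u$ of $G$ with $d_G(u)=\Delta$, how many vertices lie within distance $m$ (in $\Gmn$) of the entire crust $C_u$, and to show that together with the crust itself they force a clique-like obstruction unless all crust vertices share a color. Write $s=\frac{m-1}{2}$, so that a crust vertex on the superedge $P_{uv}$ is $(uv)_{s+1}$, and the branch vertex $u$ has distance $s+1$ to each crust vertex at $u$. The key observation is that the $s$ internal vertices $(uv)_1,\dots,(uv)_s$ of the bubble $B_{uv}$, the branch vertex $u$, and the $\Delta$ crust vertices $\{(uv)_{s+1}: uv\in E(G)\}$ all lie pairwise within distance $m$ in $\Gon$: two bubble vertices on different superedges at $u$ are at distance at most $2s<m$ through $u$; a bubble vertex and a crust vertex on (possibly different) superedges at $u$ are at distance at most $s+(s+1)=m$; and $u$ itself is at distance $s+1\le m$ from every crust vertex. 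Hence the set $S_u:=\{u\}\cup\bigcup_{uv\in E(G)}\{(uv)_1,\dots,(uv)_{s+1}\}$ induces a clique in $\Gmn$ only if we also check crust-crust distances, which are $2(s+1)=m+1>m$; so $C_u$ is an independent set, but every vertex of $C_u$ is adjacent to all of $\{u\}\cup\bigcup\{(uv)_1,\dots,(uv)_s\}$, a set of size $1+s\Delta=\frac{(m-1)\Delta}{2}+1=\omega(\Gmn)-1$ that is itself a clique.

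From here the argument is a pigeonhole count. Since $\varphi$ uses exactly $\omega(\Gmn)$ colors and $\{u\}\cup\bigcup_{uv}\{(uv)_1,\dots,(uv)_s\}$ is a clique of size $\omega(\Gmn)-1$, these vertices use $\omega(\Gmn)-1$ distinct colors, leaving exactly one color, say $c_0$, unused on this clique. Every crust vertex $(uv)_{s+1}$ is adjacent to all of this clique, so $\varphi((uv)_{s+1})=c_0$ for each $uv\in E(G)$; that is, every vertex of $C_u$ receives the color $c_0$, proving the claim. I would also remark that if $d_G(u)<\Delta$ one embeds $G$ in a $\Delta$-regular graph via Lemma~\ref{regular} (or argues directly that the clique $\{u\}\cup\bigcup_{uv}\{(uv)_1,\dots,(uv)_s\}$ still has size $\le\omega(\Gmn)-1$ and the same pigeonhole applies once one enlarges it to a maximum clique); care is needed because the lemma is stated for an arbitrary $G$, not a regular one, so the cleanest route is to extend the clique $\{u\}\cup\bigcup\{(uv)_1,\dots,(uv)_s\}$ to a maximum clique $Q$ of $\Gmn$ of size $\omega(\Gmn)$, note $Q$ misses exactly zero colors, and instead extend by one bubble vertex past a crust to see that any two crust vertices at $u$ that differed in color would let us build a clique of size $\omega(\Gmn)+1$.

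The main obstacle I anticipate is the bookkeeping around maximum cliques of $\Gmn$ when $G$ is not regular: the size $\omega(\Gmn)=\frac{m-1}{2}\Delta+2$ from Theorem~\ref{omegaB} is attained only near a vertex of full degree $\Delta$, so I must make sure the vertex $u$ whose crust I am analyzing actually has degree $\Delta$, and separately handle crusts at low-degree vertices. The clean fix is: to show all of $C_u$ is monochromatic, pick any $v$ with $uv\in E(G)$; the set $\{u,v\}\cup\{(uv)_1,\dots,(uv)_s\}\cup\{(vw)_1,\dots,(vw)_s: vw\in E(G)\}$ plus a crust vertex $(uv')_{s+1}$ with $v'\ne v$ is a clique, so its color multiset has size one less than its cardinality precisely when two of its members coincide — and crust vertices at $u$ are exactly the only candidates to share a color, forcing them equal. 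Making this last step rigorous — identifying that the only way a proper coloring with $\omega$ colors can ``fit'' is by collapsing the crust — is where I would spend the most care, and it is essentially the observation that deleting $C_u$ from a suitable maximum clique and re-adding the whole crust as a single color class is forced.
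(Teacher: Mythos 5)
Your core argument (first two paragraphs) is correct and is essentially the paper's proof: the branch vertex $u$ together with all bubble vertices at $u$ is a clique of size $\frac{m-1}{2}\Delta+1=\omega(\Gmn)-1$, every vertex of the crust $C_u$ is adjacent to all of it while crust vertices are pairwise nonadjacent, so with a palette of only $\omega(\Gmn)$ colors exactly one color remains available for the entire crust. The paper obtains the same clique structure by citing Iradmusa's description of the maximal cliques of $\Gmn$ instead of verifying the distances, but the pigeonhole step is identical. Your final paragraph, however, should be discarded: (i) Lemma~\ref{regular} cannot be used here, since $\varphi$ is a \emph{given} coloring of $\Gmn$ and does not extend to the fractional power of a $\Delta$-regular supergraph, so regularization helps only when constructing colorings, not when analyzing an arbitrary one; (ii) the proposed ``clean fix'' set $\{u,v\}\cup\{(uv)_1,\dots,(uv)_s\}\cup\{(vw)_1,\dots,(vw)_s\}$ is not a clique, because $d(u,v)=n>m$ (and, e.g., $(uv)_1$ is at distance at least $n>m$ from every $(vw)_j$). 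Note also that the paper's own proof only treats crusts at vertices of degree $\Delta$ (its maximal-clique description is anchored at a maximum-degree vertex), and this is all that is needed in the application to $C_3\square K_2$, which is cubic; so your first two paragraphs already establish everything the paper's proof does.
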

	
	\begin{proof}
	By Theorem~\ref{omegaB}, $\omega(\Gmn)= \frac{m-1}{2}\Delta+2$. By the proof of Theorem~\ref{omegaB} given in \cite{I}, maximal cliques in $\Gmn$ are induced by all vertices of distance $\frac{m-1}{2}$ and one vertex of distance $\frac{m+1}{2}$ from a branch vertex $v$ of degree $\Delta(G)$ in $G$. In our terminology, a maximal clique consists of one branch vertex $v$ with $d(v)=\Delta$, all bubbles at $v$ and one vertex $x$ of the crust $C_v$. The coloring $\varphi$ uses $\frac{m-1}{2}\Delta(G)+1$ colors on $v$ and the bubbles at $v$. Therefore, only one color remains for all the vertices of the crust $C_v$.
	\end{proof}
	
	By writing ``$\varphi(C_v)$'' for some coloring  $\varphi$ of $\Gmn$ we mean ``$\varphi(x)$ for all $x\in C_v$''.
	
	If $n\leq 2m+1$, $m$ is odd, and $\varphi$ is a proper vertex coloring of $\Gmn$ with $\omega(\Gmn)$ colors, then $\varphi(C_u)\neq \varphi(C_v)$ for every edge $uv\in E(G)$ since $|M_{uv}|\leq m-1$. So a necessary condition for $\varphi$ to be proper is that the vertex coloring $f$ of $G$ defined by $f(v):=\varphi(C_v)$ for every $v\in V(G)$ is proper.
	
	\begin{prop}\label{prop_prism}
	If $G=C_3 \Box K_2$, then $\chi(G^{\frac{3}{5}})>\omega(G^{\frac{3}{5}})$.
	\end{prop}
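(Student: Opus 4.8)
The plan is to assume for contradiction that $G^{\frac{3}{5}}$ admits a proper coloring $\varphi$ with $\omega(G^{\frac{3}{5}})$ colors. By Theorem~\ref{omegaB}, $\omega(G^{\frac{3}{5}})=\tfrac{m-1}{2}\Delta(G)+2=5$, so $\varphi$ uses exactly the palette $[5]$. Since $m=3$ is odd and $n=5\le 2m+1$, Lemma~\ref{l_crust} and the discussion preceding the Proposition apply: each crust $C_u$ is monochromatic, and $f(u):=\varphi(C_u)$ is a proper coloring of $G$. For $m=3,n=5$ every middle part $M_{uv}$ is empty, so the vertices of $G^{\frac{3}{5}}$ are just the six branch vertices, the $18$ bubble vertices (each bubble $B_{uv}$ is a single vertex, abbreviate $\beta_{uv}:=\varphi(B_{uv})$), and the crust vertices (colored by $f$). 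Working out which of these lie at distance $\le 3$ in $G^{\frac{1}{5}}$ — within a superedge $P_{uv}$ the path $u,(uv)_1,(uv)_2,(uv)_3,(uv)_4,v$ together with its distance-$\le 3$ chords, and across a branch vertex $u$ the pairs $(uw)_i,(uw')_j$ with $i+j\le 3$ — one finds that $\varphi$ being proper is equivalent to the conjunction of:
\begin{enumerate}
\item[(C1)] $f$ is a proper coloring of $G$;
\item[(C2)] $\varphi(u)\notin\{f(u)\}\cup f(N_G(u))$ for every $u\in V(G)$ ($\varphi(u)\ne f(u)$ from the clique at $u$; $\varphi(u)\ne f(w)$ for $w\in N_G(u)$ from the adjacency of $u$ to the crust vertex of $C_w$ on $uw$);
\item[(C3)] $\{\beta_{uw}:w\in N_G(u)\}=[5]\setminus\{f(u),\varphi(u)\}$ and $\beta_{uw}\ne f(w)$ for each $w\in N_G(u)$ (the first from the size-$5$ clique at $u$, the second from the adjacency of $(uw)_1$ and $(uw)_3$);
\item[(C4)] $\beta_{uw}\ne\beta_{wu}$ for every edge $uw\in E(G)$ (adjacency of $(uw)_1$ and $(uw)_4$).
\end{enumerate}
These four conditions are also readily seen to be sufficient, so it suffices to show they are jointly unsatisfiable for $G=C_3\Box K_2$.

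Write $V(G)=\{a_1,a_2,a_3,b_1,b_2,b_3\}$ with triangles $a_1a_2a_3$, $b_1b_2b_3$ joined by the edges $a_ib_i$. Each triangle forces three distinct $f$-values, so $|f(V(G))|\in\{3,4,5\}$; normalize the palette so $f(a_i)=i$. Using the order-$12$ automorphism group of the prism together with admissible color relabelings (swapping $4$ and $5$, and permuting $1,2,3$ while restoring $f(a_i)=i$), I would reduce to four cases: $|f(V(G))|=3$ with $(f(b_1),f(b_2),f(b_3))=(2,3,1)$; $|f(V(G))|=4$ with $(f(b_1),f(b_2),f(b_3))\in\{(2,3,4),(4,3,2)\}$; and $|f(V(G))|=5$ with $(f(b_1),f(b_2),f(b_3))=(4,1,5)$. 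In each case the argument is a short deterministic propagation: (C2) pins $\varphi(u)$ to at most two values at every branch vertex (and to one value whenever $f(\{u\}\cup N_G(u))$ already has four colors); then at each branch vertex (C3) is a system of distinct representatives for three prescribed colors with $w$ forbidden $f(w)$, and since $u$ has two neighbors inside a common triangle (hence with distinct $f$-colors) and never has all three neighbor-colors equal, this system is either one of the two derangements of a $3$-set or forces one bubble-color outright and leaves the other two as an unordered pair; finally (C4) on the triangle edges and then on the rung edges resolves every remaining choice, and one is always driven to two equal bubble-colors on a common edge, contradicting (C4). For example, in the case $|f(V(G))|=3$ every branch vertex is forced by (C2) to have a color in $\{4,5\}$, the bubbles around $a_1a_2a_3$ are forced so that $\beta_{a_2a_1}=3$, $\beta_{a_3a_2}=1$, $\beta_{a_1a_3}=2$, whence (C4) on the triangle edges forces $\beta_{a_1b_1}=3$, $\beta_{a_2b_2}=1$, $\beta_{a_3b_3}=2$; propagating (C3)--(C4) through $b_1$ and $b_2$ then yields $\beta_{b_1b_2}=\beta_{b_2b_1}=1$, contradicting (C4).

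I expect the only real obstacle to be bookkeeping. The proof is a finite case check, and the substantive work is (i) enumerating the adjacencies of $G^{\frac{3}{5}}$ carefully enough that the clean list (C1)--(C4) is exactly the right characterization, and (ii) carrying the forced assignments around the prism to the terminal collision in each case. The symmetry of the prism and the color relabelings are essential to keep the analysis down to four cases rather than the full list of proper colorings of the prism with at most five colors; beyond that, no further idea appears to be needed.
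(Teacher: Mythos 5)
Your argument is correct in substance but follows a genuinely different, more exhaustive route than the paper. You convert properness of a $5$-coloring of $(C_3\Box K_2)^{\frac{3}{5}}$ (via Lemma~\ref{l_crust}, exactly as the paper does) into the clean finite constraint system (C1)--(C4) on the crust coloring $f$, the branch colors, and the single-vertex bubbles, and then refute it by enumerating the proper prism colorings $f$ up to symmetry (your adjacency bookkeeping and your list of four representatives check out, and the $|f(V(G))|=3$ case you carry through is verified correct). The paper instead never enumerates crust colorings at all: it observes by pigeonhole that six crusts and five colors force two non-adjacent crusts to share a color, reduces by symmetry to $\varphi(C_{v_1})=\varphi(C_{v_6})=1$, and then only tracks where that one repeated color can sit within distance $1$ of each of the remaining four branch vertices, reaching a contradiction in a few forced steps -- essentially one case, with no need to discuss bubble colors or branch-vertex colors explicitly. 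What your approach buys is a reusable complete description of $\omega$-colorings of $G^{\frac{3}{5}}$ for cubic $G$; what it costs is that three of your four cases are asserted rather than executed, and the promised ``deterministic propagation'' is optimistic there: for instance in your five-color case $(f(b_1),f(b_2),f(b_3))=(4,1,5)$, condition (C2) pins $\varphi(a_1),\varphi(a_3),\varphi(b_2),\varphi(b_3)$ but leaves two choices each for $\varphi(a_2)$ and $\varphi(b_1)$, and several local derangements survive, so some genuine branching remains before (C4) closes the case. This is finite and will succeed, but you should either finish those cases explicitly or adopt the paper's pigeonhole-on-crusts argument, which collapses the whole analysis to a single short chain of forced placements of one color.
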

	
	\begin{proof}
	
	By Lemma~\ref{omegaB}, $\omega(\Gmn)=\frac{m-1}{2}\Delta(G)+2$. Thus, $\omega(G^{\frac{3}{5}})=5$. By way of contradiction, suppose there exists a proper vertex coloring $\varphi:V( G^{\frac{3}{5}})\rightarrow [5]$. Let $v_1,\dots,v_6$ be the vertices of $V(G)$ as on Figure~\ref{prism}. By Lemma~\ref{l_crust}, all vertices of the same crust have to receive the same color.
Since there are six crusts and only five colors, at least two crusts have to receive the same color, say $1$. Also, no two adjacent crusts can have the same color. So, we can assume that $\varphi(C_{v_1})=\varphi(C_{v_6})=1$ (all other cases are symmetric).

Every maximal clique has to use all five colors $1,2,3,4,5$. We try to color some vertex of each clique with $1$. The color $1$ cannot be used for any other crust then on $C_{v_1}$ and $C_{v_6}$ and thus, it has to be used on some vertex in distance at most $\frac{m-1}{2}=1$ from $v_k$ for every $k\in\{2,3,4,5\}$.

For $k=3$, the only vertex that can receive color $1$ is $(v_3 v_2)_1$, because all of the others are too close to one of the two crusts $C_{v_1}$, $C_{v_6}$.
	Analogously for $k=4$, the vertex $(v_4 v_5)_1$ has to receive color $1$. Subsequently $(v_5v_2)_1$ has to receive the color $1$, because $(v_5 v_6)_1$ and $v_5$ are too close to the crust $C(v_6)$ and $(v_5 v_4)_1$ is too close to $(v_4 v_5)_1$. But then there is no vertex  in distance at most $\frac{m-1}{2}$ from $v_2$ that can be colored with $1$. Indeed, $v_2$ and $(v_2 v_1)_1$ are too close to $C(v_1)$, $(v_2v_3)_1$ is too close to $(v_3 v_2)_1$ and $(v_2 v_5)_1$ is too close to $(v_5v_2)_1$. Hence, a contradiction.
	
	\end{proof}
	
	\begin{figure}
	\begin{center}
	\includegraphics{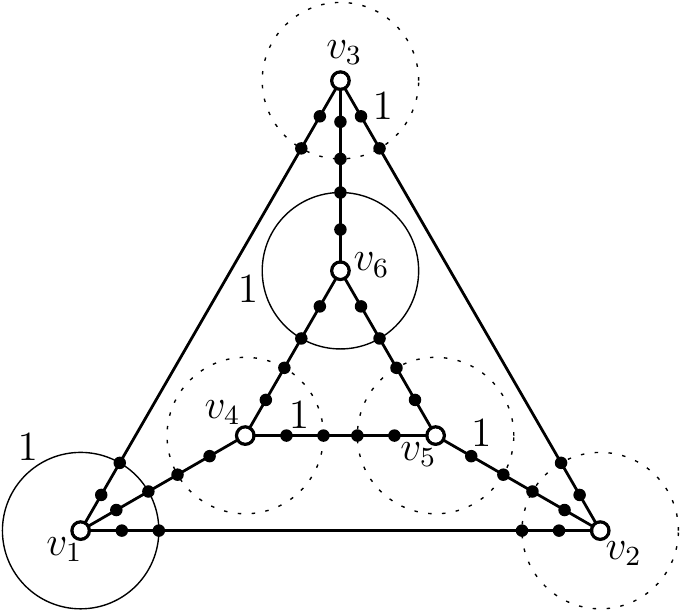}
	\caption{$(C_3\square K_2)^{\frac{3}{5}}$ is not $5$-colorable.}\label{prism}
	\end{center}
	\end{figure}
	
We now come to the two main theorems that support Conjecture~\ref{conj1} for odd $m$. We first find a general upper bound  $\omega(\Gmn)+2$ for the chromatic number of $\Gmn$ for any non-complete graph $G$ with maximum degree $\Delta\geq 4$. Then we show that in about half of the possible choices for $n$, the chromatic number of $\Gmn$ is in fact equal to $\omega(\Gmn)$. In the rest of this section, we assume that a graph $G$ is not a complete graph.

For a graph $G$ with $\Delta:=\Delta(G)$, let $f:V(G)\rightarrow [\Delta]$ be a proper vertex coloring and let $h:E(\Got)\rightarrow [\Delta]$ be a half-edge coloring of $G$. A half-edge $e_{uv}$ is called an \emph{incompatible half-edge} if $h(e_{uv})=f(v)$. If otherwise $h(e_{uv})\neq f(v)$, then $e_{uv}$ is called a \emph{compatible half-edge}. If every vertex of $V(G)$ is adjacent to at most $k$ incompatible half-edges, then we say that $f$ and $h$ are \emph{$k$-incompatible}. If $k=0$, then we say that that $f$ and $h$ are \emph{compatible} rather than $0$-incompatible.

\begin{lem}\label{compatible}
For every connected non-complete graph $G$ with maximum degree $\Delta$ and for every proper vertex coloring $f:V(G)\rightarrow [\Delta]$ there is a half-edge coloring $h:E(\Got)\rightarrow [\Delta]$ such that $f$ and $h$ are $2$-incompatible.
\end{lem}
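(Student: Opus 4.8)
The plan is to build $h$ greedily, processing the vertices of $G$ in a carefully chosen order and, when we reach a vertex $u$, colouring all half-edges $e_{uv}$ with $v\in N(u)$ simultaneously; equivalently, choosing an injection $\psi_u\colon N(u)\to[\Delta]$ with $\psi_u(v)=h(e_{uv})$. Two observations make this the natural set-up. First, the incompatibility count at $u$ is determined entirely by $\psi_u$: it equals $|\{v\in N(u):\psi_u(v)=f(v)\}|$, so it suffices to keep this set of size at most $2$ for every $u$. Second, ``$h$ is a proper half-edge colouring'' translates into ``each $\psi_u$ is injective'' (properness at the original vertices) together with ``$\psi_u(v)\neq\psi_v(u)$ for every edge $uv$'' (properness at the subdivision vertices); the latter is automatically handled if, the second time we process an edge $uv$, we treat the colour of its already-coloured half as a forbidden colour.

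First I would dispose of the case $\Delta\le 2$, where every vertex meets at most two half-edges, so that any half-edge colouring (one exists by Lemma~\ref{L_half}) is vacuously $2$-incompatible with $f$. So assume $\Delta\ge 3$. Next I would fix the vertex order $v_1,\dots,v_N$. If $G$ is not $\Delta$-regular, let $v_N$ have degree $<\Delta$ and order the rest so that every $v_i$ with $i<N$ has a neighbour $v_j$ with $j>i$ (possible since $G$ is connected: take children-before-parents in a spanning tree rooted at $v_N$). If $G$ is $\Delta$-regular, then since it is connected, non-complete and $\Delta\ge 3$, the standard lemma behind Brooks' theorem gives two non-adjacent vertices $v_1,v_2$ with a common neighbour $v_N$ such that $G-v_1-v_2$ is connected; place $v_1,v_2$ first, then order $V(G)\setminus\{v_1,v_2\}$ by such a ``later-neighbour'' ordering of $G-v_1-v_2$ rooted at $v_N$, so that again every $v_i$ with $i<N$ has a later neighbour. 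We then colour $v_1,\dots,v_N$ in this order; at step $i$ the \emph{hard} constraints are $\psi_{v_i}(v_j)\neq h(e_{v_jv_i})$ for each already-coloured neighbour $v_j$ (one forbidden colour per such slot), the \emph{budget} constraint is $|\{v_j\in N(v_i):\psi_{v_i}(v_j)=f(v_j)\}|\le 2$, and in the regular case I would additionally use the complete freedom at the first two steps to make $h(e_{v_1v_N})$ and $h(e_{v_2v_N})$ two \emph{distinct} colours.

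The substance of the argument is the feasibility of each step. For $i<N$ the vertex $v_i$ has a later neighbour, so at least one of its half-edge slots carries no hard constraint; hence a hard-valid injection $\psi_{v_i}$ exists (by Hall's theorem the only obstruction would be that all $\Delta$ slots forbid one common colour, impossible once a slot is unconstrained), and a short deficiency (``defect Hall'') count — using that each slot forbids at most two colours, one hard and one soft, and that there is a slack slot, or a spare colour when $d(v_i)<\Delta$ — shows $\psi_{v_i}$ can be chosen with at most two budget violations. For $i=N$ in the non-regular case the same works, since $d(v_N)<\Delta$ gives a spare colour. For $i=N$ in the regular case every slot of $v_N$ is hard-constrained, by $h(e_{v_jv_N})$; but these forbidden colours are not all equal, precisely because $v_1\not\sim v_2$ let us force $h(e_{v_1v_N})\neq h(e_{v_2v_N})$, so Hall's condition still holds, a hard-valid bijection $\psi_{v_N}$ exists, and the same deficiency estimate caps the violations at two. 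Since each edge is processed twice and the second pass enforces $\psi_u(v)\neq\psi_v(u)$, the resulting $h$ is a proper half-edge colouring, and the budget condition at every vertex is exactly $2$-incompatibility with $f$.

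The hard part is the last vertex $v_N$ in the $\Delta$-regular case: all $\Delta$ of its half-edge slots are hard-constrained, and if the forbidden colours $h(e_{v_jv_N})$ coincided one could not even extend $h$ to a proper half-edge colouring, never mind control incompatibilities. Getting around this is exactly where connectivity and non-completeness of $G$ are used — through the Brooks-type non-adjacent pair $v_1,v_2$, whose half-edges toward $v_N$ we desynchronise in advance — and it is also why a complete $G$ must be excluded (although a complete graph admits no proper $f\colon V(G)\to[\Delta]$ in the first place). A secondary, purely bookkeeping, difficulty is pinning the constant at $2$ rather than something larger: this is what dictates the ``$+2$'' in Theorem~\ref{thmB2}, and it forces the deficiency computation to exploit that every slot forbids at most one hard and one soft colour.
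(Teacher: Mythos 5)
Your greedy scheme is plausible in outline, but the step that carries the entire content of the lemma is asserted rather than proved. At a vertex $u$ you must choose an injection $\psi_u\colon N(u)\to[\Delta]$ avoiding one hard colour on each already-processed slot while violating the soft constraint $\psi_u(v)\neq f(v)$ at most twice, and you justify this feasibility by a ``deficiency (defect Hall) count''. The deficiency computation only yields a \emph{partial} injection that is fully valid (hard and soft) on all but at most two slots; it does not yield a hard-valid injection of \emph{all} slots with at most two soft violations. Completing the partial matching on the leftover slots can be impossible without reassigning already-matched slots (for instance when $d(u)=\Delta$ only two colours remain for the two leftover slots, and both slots may be hard-forbidden the same one of them), and in general ``minimum number of violations $\le$ deficiency of the fully-valid graph'' is simply false for bipartite graphs, so counting alone cannot close this. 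The claim you need is true, but its proof is exactly the exchange argument that constitutes the paper's proof: take a hard-valid $\psi_u$ with fewest soft violations; if three slots $v_1,v_2,v_3$ have $\psi_u(v_i)=f(v_i)=:a_i$, then transposing the colours of two violated slots removes both violations unless blocked by a hard colour, which forces (up to symmetry) $H_{v_1}=a_2$, $H_{v_3}=a_1$, $H_{v_2}=a_3$, and then the rotation $v_1\mapsto a_3$, $v_2\mapsto a_1$, $v_3\mapsto a_2$ is hard-valid and removes all three violations, a contradiction. Without this (or an equivalent augmenting-path argument) the key step of your proof is unsupported.

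A second, smaller gap is the endgame for $\Delta$-regular $G$: the ``standard lemma behind Brooks' theorem'' giving a vertex $v_N$ with non-adjacent neighbours $v_1,v_2$ such that $G-v_1-v_2$ is connected is standard only for $2$-connected graphs; when $G$ has cut vertices, Brooks-type proofs recurse on blocks, and you would need to supply that reduction. Note that both difficulties are self-inflicted by the one-pass greedy format: the paper's proof takes any half-edge colouring (Lemma~\ref{L_half}) with the minimum number of incompatible half-edges and applies the above swap/rotation at any vertex incident to three incompatible half-edges; no vertex ordering, no Hall's theorem, and no connectivity of $G$ is needed. If you keep your framework, inserting the exchange argument at each greedy step (and repairing the regular, cut-vertex endgame) would fix the proof, but at that point you have reproduced the paper's central idea with considerable extra machinery.
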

	
\begin{proof}
By contradiction. We consider a half-edge coloring $h$ that has the minimal number of non-compatible half-edges. Let $u$ be a vertex with three neighbors $v_1,v_2,v_3$ such that $h(e_{uv_i})=f(v_i)$ for all $i=1,2,3$. Let $a:=h(e_{uv_1})=f(v_1)$, $b:=h(e_{uv_2})=f(v_2)$, $c:=h(e_{uv_3})=f(v_i)$. Observe that either $h(e_{v_1u})=b$ or $h(e_{v_2u})=a$, otherwise we could switch colors on $e_{uv_1}$ and $e_{uv_2}$, a contradiction with minimality. We can assume that $h(e_{v_1u})=b$. It follows that $h(e_{v_3u})=a$ (since otherwise we could switch colors on $e_{uv_1}$ and $e_{uv_3}$), and next that
$h(e_{v_2u})=c$ (since otherwise we could switch colors on $e_{uv_2}$ and $e_{uv_3}$). But now we can recolor $e_{uv_1}$ with $c$, $e_{uv_2}$ with $a$, $e_{uv_3}$ with $b$, a contradiction with minimality.
\end{proof}	
	
	\begin{thm}\label{odd2}
	If $m$ is odd and $G$ is a connected non-complete graph with maximum degree $\Delta\geq 4$, then $\chi(\Gmn)\leq\omega(\Gmn)+2$.
	\end{thm}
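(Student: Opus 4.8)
The plan is to mimic the construction used in the even case (Theorem~\ref{miseven}), colouring the bubbles from a half‑edge colouring, but now the crusts force us to spend a few additional colours. By Lemma~\ref{regular} we may assume that $G$ is $\Delta$-regular; since a $\Delta$-regular graph having $G$ as an induced subgraph is again non‑complete, $G$ stays non‑complete, and by Lemma~\ref{range} it suffices to treat $m+2\le n\le 2m+1$. By Theorem~\ref{omegaB} we must exhibit a proper colouring of $\Gmn$ with $\tfrac{m-1}{2}\Delta+4=\omega(\Gmn)+2$ colours, and I would take the palette
$$\{0\}\ \cup\ \{a_i : a\in[\Delta],\ i\in[\tfrac{m-1}{2}]\}\ \cup\ \{\star_1,\star_2,\star_3\},$$
where each $a\in[\Delta]$ is split into a tuple $a=(a_1,\dots,a_{(m-1)/2})$ exactly as in the proof of Theorem~\ref{miseven}; note $\tfrac{m-1}{2}\geq 1$ since $m$ is odd and $m>1$.

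First I would fix a proper vertex colouring $f:V(G)\to[\Delta]$ (it exists by Brooks' theorem) and, by Lemma~\ref{compatible}, a half‑edge colouring $h$ of $G$ such that $f$ and $h$ are $2$-incompatible. Then put $\varphi\equiv 0$ on branch vertices, $\varphi(C_u)=f(u)_1$ on every crust (this is legitimate: two vertices of one crust lie at distance $m+1$ in $G^{1/n}$, and $\varphi(C_u)=\varphi(C_v)$ would force $f(u)=f(v)$, impossible on an edge $uv$), and $\varphi(B_{uv})=h(e_{uv})$ on every bubble — except on the \emph{bad} bubbles, which must be recoloured. A bubble $B_{uv}$ (at $u$) is bad precisely when $h(e_{uv})\in\{f(u),f(v)\}$: if $h(e_{uv})=f(u)$ it clashes with the crust $C_u$, and if $h(e_{uv})=f(v)$ it clashes (for the smaller values of $n$, where $C_v$ is adjacent to $B_{uv}$) with the far crust $C_v$ on the superedge $uv$. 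Since $\{h(e_{uw}):w\in N(u)\}=[\Delta]$, exactly one bubble at $u$ has base $f(u)$, and by $2$-incompatibility at most two bubbles at $u$ have base $f(w)$ for their far endpoint $w$; hence every vertex carries at most three bad bubbles, which together with its crust form a clique — and this is exactly what the three extra colours $\star_1,\star_2,\star_3$ are for.

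Next I would recolour the bad bubbles. At $u$ the good bubbles still use $\{a_i : a\ne f(u),f(w_1),f(w_2)\}$, so the (at most) three bad bubbles together with $C_u$ must be properly coloured from the remaining $3\cdot\tfrac{m-1}{2}+3$ colours $\{f(u)_i,f(w_1)_i,f(w_2)_i\}\cup\{\star_1,\star_2,\star_3\}$; we only need $3\cdot\tfrac{m-1}{2}+1$ of them, and each bad bubble is forbidden only $O(1)$ of the $f(\cdot)_1$'s (its own and the adjacent crusts'), so a greedy assignment succeeds as long as no bad bubble is adjacent to another recoloured bubble. The only such adjacency is between the two bubbles lying on a single superedge, and a distance count shows it occurs only when $n\le 2m-1$. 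For those $n$ I would resolve it as in Section~\ref{S_Cubic}: form an auxiliary graph on $V(G)$ whose edges are the superedges carrying two bad bubbles — every vertex meets at most three bad bubbles, so this graph has bounded structure and can be oriented componentwise — and let the orientation decide, globally and consistently, which of $\star_1,\star_2,\star_3$ is used on each side of such a superedge. Finally, the middle parts are coloured as in Theorem~\ref{miseven}: on the $u$-side of $M_{uv}$ take, in reverse order, a tuple from $[\Delta]\setminus\{h(e_{uv}),f(u)\}$ and not equal to the (at most one) tuple spent recolouring $B_{uv}$ — non‑empty since $\Delta\ge4$ — and symmetrically on the $v$-side.

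It then remains to check that $\varphi$ is proper, i.e.\ that equal colours occur only at distance greater than $m$ in $G^{1/n}$. This is a finite case analysis of the same flavour as in the proof of Theorem~\ref{miseven}, now also accounting for crust vertices and the few $\star$-coloured vertices; the cases not involving a crust vertex or a $\star$-vertex are verbatim the earlier ones, and the remaining cases are exactly where the classification of bad bubbles and the orientation were designed to kick in. I expect the recolouring of the bad bubbles — in particular making the choice of $\star$-colours on the two ends of a superedge globally consistent for the small values of $n$ — to be the main obstacle; everything else is a routine, if lengthy, distance computation.
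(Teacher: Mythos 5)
Your scaffolding is the same as the paper's (a Brooks colouring $f$, a $2$-incompatible half-edge colouring $h$ from Lemma~\ref{compatible}, and three special colours beyond $0$ and the $\frac{m-1}{2}\Delta$ tuple colours), but the step you leave to "a greedy assignment plus an orientation" is exactly the crux of the theorem, and as described it does not go through. Recolouring a whole bad bubble by another full tuple is far more constrained than you acknowledge: any two bubble vertices at the same branch vertex are within distance $m-1$, so a recoloured bubble at $u$ must avoid \emph{every} colour on \emph{every} other bubble at $u$; the tuple $f(u)$ is unusable on any bubble at $u$ (it contains your crust colour $f(u)_1$, and every position of a bubble at $u$ is within distance $m$ of every vertex of $C_u$); the tuple $f(w)$ is unusable on $B_{uw}$ for the smaller $n$ in the range (it contains the colour of the far crust $C_w$, whose same-superedge vertex is within distance $m$ of every position of $B_{uw}$); and a recoloured bubble may receive the same tuple as the untouched far bubble $B_{vu}$, which for $n\le 2m-1$ forces a position-by-position alignment you never arrange. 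So at a vertex carrying three bad bubbles you cannot simply distribute $f(u),f(w_1),f(w_2)$ and three stars, and your claim that the only dangerous adjacency is a superedge carrying two bad bubbles is not correct: far crusts, far (good) bubbles and the middle parts all interact with the recoloured bubbles. The paper sidesteps all of this by never moving whole tuples: it colours the crust $C_v$ with the \emph{last} coordinate $f(v)[\frac{m-1}{2}]$, so that a bad bubble clashes in exactly one coordinate, replaces that single coordinate by $\heartsuit$ (when $h(e_{uv})=f(u)$, shifting the tuple) or $\diamondsuit$ (when $h(e_{uv})=f(v)$), repairs the $\heartsuit$--$\heartsuit$ clashes on a superedge by swapping with the adjacent branch vertex, and uses $2$-incompatibility precisely so that the $\diamondsuit$-vertices induce paths and even cycles, which it $2$-colours with $\diamondsuit_1,\diamondsuit_2$.

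Your treatment of the middle parts has the same kind of gap. On the $u$-side of $M_{uv}$ you exclude only $h(e_{uv})$, $f(u)$ and one recolouring tuple, but for the relevant $n$ that half is also within distance $m$ of the far bubble $B_{vu}$ and of the far crust vertex on the same superedge, so $h(e_{vu})$ and the tuple carrying the colour of $C_v$ must also be dodged; with $\Delta=4$ these exclusions can exhaust $[\Delta]$, so no admissible tuple need remain. This is exactly why the paper's step for the middle parts is a three-case analysis in which, when the crust colours coincide with the last coordinates of the chosen tuples, a single coordinate of the middle half is again replaced by $\heartsuit$ or $\diamondsuit_1$ (and why Corollary~\ref{compatible_cor} needs $\Delta\ge 5$ to avoid $\diamondsuit_1$ there). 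In short, your proposal reproduces the paper's general framework but replaces its two key devices --- single-coordinate repairs keyed to the choice of crust coordinate, and the $2$-colouring of the $\diamondsuit$-vertices --- by an unproved greedy/orientation step that, in the form stated, fails at vertices with three bad bubbles and for the small values of $n$.
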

	
	\begin{proof}
	 Let $f:V(G)\rightarrow [\Delta]$ and $h:E(\Got)\rightarrow [\Delta]$, where $1=(1_1,\dots,1_{\frac{m-1}{2}}),\allowbreak \dots, \Delta=(\Delta_1,\dots,\Delta_{\frac{m-1}{2}})$, be $2$-compatible colorings given by Lemma~\ref{compatible}. Then we define an auxiliary (not necessarily proper) vertex coloring  $\varphi':V(\Gmn)\rightarrow \{0,\heartsuit, \diamondsuit,$ $1_1,\dots,1_{\frac{m-1}{2}},\dots,$ $\Delta_1,\dots,\Delta_{\frac{m-1}{2}}\}$.
	
	 \begin{enumerate}
	  \item \textbf{branch vertices}: $\varphi'(v)=0$  for every branch vertex $v\in\Gmn$.
	  \item \textbf{crusts}: $\varphi'(C_v)=f(v)[\frac{m-1}{2}]$ for every branch vertex $v\in\Gmn$
	  \item \textbf{bubbles}:	
	  $$\varphi'(B_{uv})=\left\{
	  \begin{array}{llllllll}
	  (\heartsuit)*h(e_{uv})[1:\frac{m-1}{2}-1] & \mbox{ if }  h(e_{uv})=f(u),\\
	   h(e_{uv})[1:\frac{m-1}{2}-1]*(\diamondsuit)  & \mbox{ if } h(e_{uv})=f(v),\\
	   h(e_{uv}) &\mbox{ otherwise.}
	  \end{array}
	  \right.
	 $$
	\end{enumerate}
	
Observe that there can only be a conflict between vertices using color $\heartsuit$ and vertices using color $\diamondsuit$.
Vertices colored $\heartsuit$ are exactly vertices $(uv)_1$ for which $h(e_{uv})=f(u)$, and we will call them $\heartsuit$-vertices. If $n=m+2$, then for every superedge $P_{uv}$, the vertices $(uv)_1$ and $(vu)_1$ are of distance $m$ in $\Got$. So, if both $(uv)_1$, $(vu)_1$ are $\heartsuit$-vertices, then we have a conflict. For each superedge containing two $\heartsuit$-vertices, we choose one of the two $\heartsuit$-vertices. We call the set of all the chosen $\heartsuit$-vertices as a $(\heartsuit \rightarrow 0)$-set, and the set of their neighboring branch vertices as a $(0 \rightarrow \heartsuit)$-set.


Vertices colored $\diamondsuit$ are vertices $(uv)_{(m-1)/2}$ for which $h(e_{uv})=f(v)$, and we will call them $\diamondsuit$-vertices. Since $f$ and $h$ are $2$-incompatible, we have at most two $\diamondsuit$-vertices around each branch vertex. The subgraph $H$ of $\Gmn$ induced by all $\diamondsuit$-vertices then consists of disjoint union of paths and even cycles. So, we can properly color $H$ using two colors $\diamondsuit_1$ and $\diamondsuit_2$. Let $\diamondsuit_1$-set and $\diamondsuit_2$-set denote the set of $\diamondsuit$-vertices colored
$\diamondsuit_1$ and $\diamondsuit_2$, respectively.

We now define $\varphi:V(\Gmn)\rightarrow \{0, \heartsuit, \diamondsuit_1, \diamondsuit_2, 1_1,\dots,1_{\frac{m-1}{2}},\dots, \Delta_1,\dots,\Delta_{\frac{m-1}{2}}\}$, for branch vertices, crusts, and bubbles.

$$\varphi(v)=\left\{
\begin{array}{lllllllll}
0 & \mbox{ if }  v\in  (\heartsuit \rightarrow 0)\mbox{-set},\\
\heartsuit & \mbox{ if } v\in (0\rightarrow \heartsuit)\mbox{-set},\\
\diamondsuit_1& \mbox{ if }  v\in \diamondsuit_1\mbox{-set},\\
\diamondsuit_2& \mbox{ if }  v\in \diamondsuit_2\mbox{-set},\\
\varphi'(v)& \mbox{ otherwise (if } v \mbox{ is not from a middle part).}
\end{array}
\right.
$$
\begin{enumerate}
	\item[4.] \textbf{middle parts}:
 Let $k=\lceil \frac{|M_{uv}|}{2} \rceil$ and let $l=\lfloor \frac{|M_{uv}|}{2} \rfloor$.  Let $a,b \in[\Delta]$ such that $\varphi(B_{uv})=a$, $\varphi(B_{vu})=b$. Then we have the following three (up to symmetry) cases.

	 \begin{enumerate}
	 \item $\varphi(C_{u}),\varphi(C_{v})\in\{a_{\frac{m-1}{2}},b_{\frac{m-1}{2}}\}$. Then we fix $c,d\in [\Delta] \setminus\{a,b\}$ ($c\neq d$ exist since $\Delta\ge 4$) and  define $\varphi(M_{uv}[1:k])\allowbreak =\overline{c}[1:k]$ and $\varphi(M_{vu}[1:l])=\overline{d}[1:l].$
	 \item $\varphi(C_{u})=c_{\frac{m-1}{2}}$, $\varphi(C_{v})=a_{\frac{m-1}{2}}$, where $c\in[\Delta]\setminus\{a,b\}$. Then we  fix $d\in [\Delta] \setminus\{a,b,c\}$ and define $\varphi(M_{uv}[1:k])=\overline{c}[2:k]*(\heartsuit)$ and $\varphi(M_{vu}[1:l])=\overline{d}[1:l].$
	 	 \item $\varphi(C_{u})=c_{\frac{m-1}{2}}$, $\varphi(C_{v})=d_{\frac{m-1}{2}}$, where $c,d\in [\Delta]\setminus\{a,b\}$. Then we define $\varphi(M_{uv}[1:k])=\overline{c}[2:k]*(\heartsuit)$ and $\varphi(M_{vu}[1:l])=\overline{d}[2:l]*(\diamondsuit_1).$
	 \end{enumerate}
	 \end{enumerate}

		\end{proof}

\begin{cor}[of the proof of Theorem~\ref{odd2}]\label{compatible_cor}
If $m$ is odd and $G$ is a non-complete graph with maximum degree $\Delta\geq 5$ and such that there are compatible proper colorings $f:V(G)\rightarrow [\Delta]$ and $h:E(\Got)\rightarrow [\Delta]$, then $\chi(\Gmn)=\omega(\Gmn)$.
\end{cor}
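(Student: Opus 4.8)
The plan is to revisit the construction of $\varphi$ from the proof of Theorem~\ref{odd2} and observe that, under the stronger hypotheses of the corollary, the two ``extra'' colors $\heartsuit$ and $\diamondsuit$ never need to be introduced at all. Concretely, since $f$ and $h$ are \emph{compatible} (i.e.\ $0$-incompatible), no half-edge $e_{uv}$ satisfies $h(e_{uv})=f(u)$ or $h(e_{uv})=f(v)$. Hence in step~3 of the definition of $\varphi'$ only the third case ever occurs, so every bubble is colored by $\varphi(B_{uv})=h(e_{uv})$ with no $\heartsuit$- or $\diamondsuit$-vertices created. Consequently the passage from $\varphi'$ to $\varphi$ (the $(\heartsuit\to 0)$-set, the $\diamondsuit_1/\diamondsuit_2$ two-coloring of the auxiliary graph $H$, etc.) is vacuous, and we are left needing only to color branch vertices with $0$, crusts by $\varphi(C_v)=f(v)[\tfrac{m-1}{2}]$, bubbles by $h(e_{uv})$, and the middle parts. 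This already uses at most $\tfrac{m-1}{2}\Delta+2=\omega(\Gmn)$ colors, so equality will follow once the coloring is shown to be proper.

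By Lemma~\ref{range} it suffices to treat $m+2\le n\le 2m+1$, so each middle part $M_{uv}$ has size at most $m-1$; split $M_{uv}$ into $M_{uv}[1:k]$ and $M_{vu}[1:l]$ with $k=\lceil |M_{uv}|/2\rceil$, $l=\lfloor |M_{uv}|/2\rfloor$ exactly as in step~4 of Theorem~\ref{odd2}. Write $a=\varphi(B_{uv})=h(e_{uv})$ and $b=\varphi(B_{vu})=h(e_{vu})$. Because $f$ and $h$ are compatible, $\varphi(C_u)=f(u)[\tfrac{m-1}{2}]$ with $f(u)\ne a$ (the half-edge $e_{uv}$ is compatible at $u$: $h(e_{uv})=f(u)$ would make it incompatible, contradicting compatibility — more precisely $h(e_{vu})\ne f(u)$ gives $b\ne f(u)$, and $a\ne f(u)$ needs the half-edge at $u$ on the \emph{other} incident edges; here we use that the crust colors attached to $u$ are all $f(u)$ while the bubble colors at $u$ are the $h$-values of half-edges at $u$, which are compatible hence $\ne f(u)$). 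Thus the crust color at each endpoint avoids both $a$ and $b$, i.e.\ we are always in case~(a) or case~(c) of step~4, and since $\Delta\ge 5$ there are at least three colors in $[\Delta]\setminus\{a,b\}$, giving enough room to pick the auxiliary color-tuples $c,d$ needed there while also avoiding the (single) crust colors $\varphi(C_u),\varphi(C_v)$. Color $M_{uv}[1:k]=\overline{c}[1:k]$ and $M_{vu}[1:l]=\overline{d}[1:l]$ with $c,d$ distinct elements of $[\Delta]\setminus\{a,b,\{f(u),f(v)\}\text{-relevant colors}\}$.

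The verification that $\varphi$ is proper is the routine distance check, carried out exactly as in the proofs of Theorem~\ref{miseven} and Theorem~\ref{odd2}: one checks pairwise for vertices $x,y$ with $\varphi(x)=\varphi(y)\ne 0$ that $d_{G^{1/n}}(x,y)>m$, splitting into the cases (bubble, bubble), (bubble, crust), (bubble, middle), (crust, crust), (crust, middle), (middle, middle), and within each case into subcases according to whether the underlying edges of $G$ are equal, share one endpoint, or are disjoint. The key facts that make every case work are: $h$ is a proper half-edge coloring (so two bubbles at the same branch vertex get distinct color-tuples), $f$ is a proper vertex coloring of $G$ (so adjacent crusts get distinct colors, as noted before Proposition~\ref{prop_prism}), $f$ and $h$ are compatible (so a bubble $B_{uv}$ and the crust $C_v$ at its far endpoint get different colors, since $h(e_{uv})\ne f(v)$), and the ``reversed tail'' $\overline{c}$ convention on middle parts forces any color repetition across a middle part to occur at positions whose distances add up to at least $m+1$.

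The main obstacle, and the only place the hypothesis $\Delta\ge 5$ (rather than $\ge 4$) is really used, is step~4: we must choose the two middle-part color-tuples $c\ne d$ from $[\Delta]\setminus\{a,b\}$ so that, simultaneously, $c$ differs from the crust color at $u$ and $d$ differs from the crust color at $v$ — this is what lets us stay in case~(a)/(c) without ever reintroducing $\heartsuit$ or $\diamondsuit_1$. With $\Delta\ge 5$ there are at least three available colors besides $a,b$, so after removing at most one forbidden crust color on each side a valid choice of $c,d$ still exists; with $\Delta=4$ only two colors remain and the construction can be forced into the case that produced the extra colors in Theorem~\ref{odd2}. Once this selection is made the remaining distance computations are entirely mechanical and identical in form to those already spelled out, so we only indicate them rather than reproduce them.
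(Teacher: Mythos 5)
There is a genuine gap, and it sits at the very first step: you read ``compatible'' as forbidding both $h(e_{uv})=f(u)$ and $h(e_{uv})=f(v)$, but the paper's definition in Section~\ref{S_Odd} is asymmetric — a half-edge $e_{uv}$ is incompatible precisely when $h(e_{uv})=f(v)$, so compatibility of $f$ and $h$ says nothing about $h(e_{uv})=f(u)$. Hence compatibility does eliminate the $\diamondsuit$-vertices, but $\heartsuit$-vertices (those $(uv)_1$ with $h(e_{uv})=f(u)$) can perfectly well still exist. For such an edge your simplified coloring — bubble $B_{uv}$ colored by the unshifted tuple $h(e_{uv})$ and the crust at $u$ colored $f(u)[\frac{m-1}{2}]$ — is not proper: the last bubble vertex $(uv)_{\frac{m-1}{2}}$ receives the color $f(u)[\frac{m-1}{2}]$, and it lies at distance $1$ from the crust vertex $(uv)_{\frac{m+1}{2}}$ and at distance exactly $m$ from every other crust vertex $(uw)_{\frac{m+1}{2}}$ at $u$, so in $\Gmn$ it is adjacent to vertices of the same color. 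This is exactly the conflict the $\heartsuit$-shift was introduced to repair in Theorem~\ref{odd2}, and it does not disappear under the corollary's hypotheses; your downstream claims (that the passage from $\varphi'$ to $\varphi$ is vacuous, that only cases (a)/(c) of step 4 occur, and that $\Delta\ge 5$ is needed only to dodge crust colors in the middle parts) all inherit this misreading.

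The paper's actual proof is different and shorter: it keeps the color $\heartsuit$ — which costs nothing, since $\omega(\Gmn)=\frac{m-1}{2}\Delta+2$ leaves room for $0$ and one further special color besides the $\frac{m-1}{2}\Delta$ tuple colors — and uses compatibility only to remove the $\diamondsuit$-vertices, so $\diamondsuit_1,\diamondsuit_2$ are no longer needed on bubbles. The single remaining use of $\diamondsuit_1$ is in case (c) of the middle parts, and there the hypothesis $\Delta\ge 5$ supplies a fifth tuple $e\in[\Delta]\setminus\{a,b,c,d\}$, so one sets $\varphi(M_{vu}[1:l])=\overline{e}[1:l]$ instead. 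To salvage your route you would either have to retain the $\heartsuit$ machinery (i.e.\ follow the paper), or strengthen the hypothesis to a symmetric compatibility condition $h(e_{uv})\notin\{f(u),f(v)\}$ for every half-edge, which is not what the corollary assumes and whose existence is not provided elsewhere in the paper.
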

	
\begin{proof}
If $f$ and $h$ are compatible, then there are no
$\diamondsuit$-vertices in $\Gmn$, so we do not need to use colors $\diamondsuit_1$ and $\diamondsuit_2$. The only other place where we use one of these colors is the middle part, in Case (c), where we use the color $\diamondsuit_1$. But if $\Delta\geq 5$, then there exists a fifth color $e\in [\Delta]\setminus \{a,b,c,d\}$. So, we can define $\varphi(M_{uv}[1:k])=\overline{c}[2:k]*(\heartsuit)$ and $\varphi(M_{vu}[1:k])=\overline{e}[1:k].$
\end{proof}

When each middle part has at least $\frac{m+1}{2}$ and at most $m-1$ vertices, the situation gets easier, as the following theorem shows. (Note that $n=2|B_{uv}|+2+|M_{uv}|+1$.)
	
	\begin{thm}
	Let $m$ be odd and $G$ be a non-complete graph with $\Delta(G)\geq 3$. If $\frac{3m+5}{2}\leq n\leq 2m$, then $\chi(\Gmn)=\omega(\Gmn)$. If $n=2m+1$, then $\chi(\Gmn)\le \omega(\Gmn)+1$.
	\end{thm}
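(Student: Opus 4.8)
The plan is to use the same machinery as in Theorem~\ref{odd2} but to exploit the fact that every middle part is now long enough to absorb the parity obstructions without spending extra colors. Recall that the only reason $\omega(\Gmn)+2$ colors were needed in Theorem~\ref{odd2} was the pair of ``defect'' colors $\heartsuit$ and $\diamondsuit_1,\diamondsuit_2$ forced by incompatible half-edges and by the $\heartsuit$-conflict that arises precisely when $|M_{uv}|$ is as small as possible. When $|M_{uv}|\ge\frac{m+1}{2}$, a middle part has enough vertices that, after placing the shifted tuples $\overline c[2:k]*(\heartsuit)$ etc., the $\heartsuit$-vertex (respectively the $\diamondsuit$-vertex) that used to sit at distance exactly $m$ from its twin on the opposite side of the same superedge is now pushed to distance $>m$, so no conflict between the two sides of a superedge ever occurs. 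Concretely, I would first reprove Lemma~\ref{compatible} unchanged to get $2$-incompatible $f$ and $h$, then build $\varphi'$ exactly as in Theorem~\ref{odd2} (branch vertices $0$, crusts $f(v)[\frac{m-1}{2}]$, bubbles with the three-case definition), and argue that because $|M_{uv}|$ is at least $\frac{m+1}{2}$, the distance in $\Got$ between $(uv)_1$ and $(vu)_1$ is $1+|M_{uv}|+1>m$ whenever $n\ge\frac{3m+5}{2}$, hence there are simply no $\heartsuit$-conflicts and no need for a $(\heartsuit\to 0)$-set.

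Next I would handle the $\diamondsuit$-vertices. Unlike $\heartsuit$-vertices they can still conflict with each other around a common branch vertex, but since there are at most two of them per branch vertex (by $2$-incompatibility) and they again cannot conflict across a superedge (the two $\diamondsuit$-vertices on $P_{uv}$ are at distance $1+(\frac{m-1}{2}-1)\cdot\mbox{(nothing)}$... more precisely at distance $|M_{uv}|+2>m$ using $n\ge\frac{3m+5}{2}$), the subgraph they induce is a disjoint union of paths and even cycles as in Theorem~\ref{odd2}. The point is that now, instead of introducing $\diamondsuit_1$ and $\diamondsuit_2$ as brand-new colors, I can recolor each $\diamondsuit$-vertex $(uv)_{(m-1)/2}$ using a color already in $[\Delta]$: because $\Delta\ge 3$ and the crust/bubble structure only forbids at most two colors at that position, and because the long middle part gives room to re-route the conflicting color into $M_{uv}$ exactly as in Cases (a)--(c) of step~4 of Theorem~\ref{odd2}. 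So the whole coloring uses only $\{0\}\cup\{a_i: a\in[\Delta],\, i\in[\frac{m-1}{2}]\}$, which is $\frac{m-1}{2}\Delta+1=\omega(\Gmn)$ colors when $\Delta\ge 3$ — wait, $\omega(\Gmn)=\frac{m-1}{2}\Delta+2$, so I actually keep one defect color but re-use it for the $\heartsuit$-slot only, and the claim is that one suffices. I would set this up carefully: use colors $\{0,\heartsuit\}\cup\{a_i\}$, define $\varphi$ on branch vertices, crusts, bubbles exactly as $\varphi'$, and then show every middle part can be filled from $[\Delta]$-tuples (shifted to dodge the bubble colors at both ends and the crust colors) so that all same-colored vertices are at distance $>m$; the extra length $|M_{uv}|\ge\frac{m+1}{2}$ is exactly what makes the shifted-tuple argument close.

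For the boundary case $n=2m+1$ (equivalently $|M_{uv}|=m$), the middle part is one vertex longer, which paradoxically re-creates one parity issue: a shifted tuple of length $m$ spanning a middle part whose two neighboring crusts must differ can fail to meet both the bubble constraint and the crust constraint simultaneously at one interior vertex, forcing a single additional color there. I would handle this by the standard trick: color almost everything as in the $n\le 2m$ case, isolate the at most one bad interior vertex per superedge, collect these bad vertices into a subgraph of $\Gmn$, observe it has bounded structure (again paths and even cycles, or something $2$-degenerate), and push the defect either onto a branch vertex (recolored from $0$ via a local switch along a path/cycle, as in the ``switching edge'' argument of Theorem~\ref{meven}) or onto a genuinely new color used sparsely — giving $\chi(\Gmn)\le\omega(\Gmn)+1$. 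The bookkeeping for $n=2m+1$ is the main obstacle: one must verify that the set of superedges needing a local fix, together with the branch vertices available to absorb the fix, forms a structure (bipartite-like, or a union of paths and even cycles) on which the alternating recolor actually terminates without creating new conflicts — this is exactly the kind of ``switching'' lemma (Lemmas~\ref{lem1},~\ref{lem2} and the $\overrightarrow{H}$ argument in Theorem~\ref{meven}) that one has to set up afresh for the odd-$m$, $n=2m+1$ situation, and getting its hypotheses to hold is where the real work lies.
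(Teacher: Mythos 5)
Your plan does not close the gap it needs to close, and it diverges from what actually makes this range work. The paper does not patch the scheme of Theorem~\ref{odd2}; it redesigns it. There the crusts carry the \emph{last} coordinate and the middle parts are colored by tuples avoiding both bubble colors (which is exactly what forces $\Delta\geq 4$ and the colors $\diamondsuit_1,\diamondsuit_2$). Here the paper takes an arbitrary proper $f:V(G)\to[\Delta]$ and half-edge coloring $h$ (no $2$-incompatibility needed), puts $f(v)[1]$ on the crust $C_v$, keeps $\varphi(B_{uv})=h(e_{uv})$, and colors the middle parts by the \emph{reversed crust tuples} $\overline{f(v)},\overline{f(u)}$; the hypothesis $n\geq\frac{3m+5}{2}$, i.e.\ $|M_{uv}|=n-m-2\geq\frac{m+1}{2}$, is exactly what makes the first-coordinate crust convention and the crust-based middle coloring consistent, the only surviving clash being $(uv)_1$ versus $C_u$ when $h(e_{uv})=f(u)$, which one extra color $\heartsuit$ absorbs. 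This also covers $\Delta=3$, whereas your route through Cases (a)--(c) of Theorem~\ref{odd2} needs two colors outside $\{a,b\}$ and so silently assumes $\Delta\geq 4$, excluding part of the statement. Your central claim for eliminating $\diamondsuit_1,\diamondsuit_2$ --- that at a $\diamondsuit$-vertex ``the crust/bubble structure only forbids at most two colors'' --- is false: $(uv)_{\frac{m-1}{2}}$ lies in a maximum clique together with $u$, all bubble vertices at $u$, and a crust vertex of $C_u$, so under $\varphi'$ essentially every color of the palette is forbidden there; generically the only unused subscripted color is $h(e_{uv})[\frac{m-1}{2}]=f(v)[\frac{m-1}{2}]$, which is precisely blocked by $C_v$ across the superedge (at distance $n-m\leq m$). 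So no local recoloring from $[\Delta]$ exists in general; one must change the conventions, which is the paper's actual idea. (Several of your distance computations are also off: $d_{G^{1/n}}((uv)_1,(vu)_1)=n-2$, not $|M_{uv}|+2$, and the two bubble-end vertices of a superedge are at distance $n-m+1$, which \emph{can} be at most $m$ in this range.)

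For $n=2m+1$ you only sketch a switching argument and concede that establishing its hypotheses is ``where the real work lies,'' so this case is not proved. The paper's step is much lighter: take the coloring of $G^{\frac{m}{2m}}$ just constructed, subdivide each superedge once by inserting a vertex $(uv)_*$ between $M_{uv}[k]$ and $M_{vu}[l]$, and give all inserted vertices one brand-new color. Distances between old vertices only increase, so the old coloring stays proper, and any two inserted vertices are at distance at least $2m>m$, giving $\chi(G^{\frac{m}{2m+1}})\leq\omega+1$ at once. I would encourage you to rework the first part along the paper's lines (crusts $=f(v)[1]$, middle parts $=\overline{f(v)},\overline{f(u)}$, a single $\heartsuit$) and then apply this one-vertex insertion for the boundary case.
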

	
	\begin{proof}
	
	Let $\Delta=\Delta(G)$. By Lemma~\ref{regular}, we can assume that $G$ is $\Delta$-regular. We start with the first part of the claim, so we suppose $\frac{3m+5}{2}\leq n\leq 2m$. Since $\omega(\Gmn)=\frac{m-1}{2}\Delta+2$ and $\omega(\Gmn)\leq \chi(\Gmn)$, we only need to show a coloring of $\Gmn$ that uses $\frac{m-1}{2}\Delta+2$ colors.
	
	Let $f:V(G)\rightarrow [\Delta]$ be a proper vertex coloring of $G$ and let $h:E(\Got)\rightarrow [\Delta]$ be a proper half-edge coloring of $G$. Let $uv$ be an edge of $G$. If $f(u)=h(e_{uv})$, then the vertex $(u,v)_1$ is called a $\heartsuit$-vertex. For each superedge containing two $\heartsuit$-vertices, we choose one and put it in the (originaly empty) set called $(\heartsuit \rightarrow 0)$-set. The set of branch vertices that have a neighbor in the $(\heartsuit \rightarrow 0)$-set is called $(0 \rightarrow \heartsuit)$-set. We define a coloring 	 $\varphi:V(\Gmn)\rightarrow \{0, \heartsuit,1_1,\dots,1_{\frac{m-1}{2}},\dots, \Delta_1,\dots,\Delta_{\frac{m-1}{2}},\}$ as follows.
	\begin{enumerate}
	 \item \textbf{branch vertices:} $\varphi(v)=\heartsuit$ if $v\in (0\rightarrow \heartsuit)$-set, $\varphi(v)=0$ otherwise.
	 \item \textbf{$\heartsuit$-vertices:}  $\varphi(v)=0$ if $v\in  (\heartsuit\rightarrow 0)$-set, $\varphi(v)=\heartsuit$ otherwise.
	 \item \textbf{crusts:}  $\varphi(C_v)=f(v)[1]$.
	 \item \textbf{bubbles:} If there is no $\heartsuit$-vertex in $B_{uv}$, then $\varphi(B_{uv})=h(e_{uv})$. Otherwise $\varphi(B_{uv}[2:\frac{m-1}{2}])=h(e_{uv})[2:\frac{m-1}{2}]$.
	 \item \textbf{middle parts} Let $k=\lceil \frac{1}{2}|M_{uv}| \rceil$ and let $l=\lfloor \frac{1}{2}|M_{uv}| \rfloor$. If $k=\frac{m-1}{2}$ and $h(e_{uv}), h(e_{vu}), f(u), f(v)$ are pairwise distinct (so there is no $\heartsuit$-vertex on $P_{uv}$), then we define $\varphi(M_{uv}[1:k])=\overline{f(v)}[1:k-1]*(\heartsuit)$. In all other cases we define $f(M_{uv}[1:k])=\overline{f(v)}[1:k]$. Lastly, we define $f(M_{vu}[1:l])=\overline{f(u)}[1:k]$.

	\end{enumerate}
 The second part of the theorem follows immediately by inserting one more vertex $(uv)_*$ on every superedge of $G^{\frac{m}{2m}}$ and coloring it with a new color. In particular, to obtain $G^{\frac{m}{2m+1}}$ from $G^{\frac{m}{2m}}$, we replace the edge $M_{uv}[k] M_{vu}[l]$ by a path  $M_{uv}[k] (uv)_* M_{vu}[l]$  for every edge $uv\in E(G)$, and color all the new vertices $(uv)_*$ by a new color.
 \end{proof}
	
	\section{Theorem 2 for complete graphs}\label{S_Complete}
	
\begin{lem}\label{comp1}
$\chi(G^{\frac{3}{5}})=\omega(G^{\frac{3}{5}})$ for any complete graph $G$ with $\Delta(G)\geq 3$.
\end{lem}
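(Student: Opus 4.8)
The plan is to drive the colouring into the shape forced by Lemma~\ref{l_crust} and thereby reduce to a small combinatorial statement about derangements. Write $G=K_t$ with $t=\Delta(G)+1\ge 4$ and vertex set $[t]$, and recall that for $G^{\frac{3}{5}}$ we have $m=3$, so $\lfloor m/2\rfloor=1$, $(m+1)/2=2$, and the middle parts $M_{uv}$ are empty; thus each superedge $P_{uv}$ consists of $u,(uv)_1,(uv)_2,(uv)_3,(uv)_4,v$ with $(uv)_1\in B_{uv}$, $(uv)_2\in C_u$, $(uv)_3=(vu)_2\in C_v$ and $(uv)_4=(vu)_1\in B_{vu}$. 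By Theorem~\ref{omegaB}, $\omega(G^{\frac{3}{5}})=\frac{m-1}{2}\Delta+2=t+1$, so I only need a proper colouring of $G^{\frac{3}{5}}$ with $t+1$ colours $[t]\cup\{\star\}$. A direct distance computation in $G^{\frac{1}{5}}$ gives the adjacencies: a branch vertex $i$ is adjacent precisely to the bubbles $(ij)_1$ ($j\ne i$), to all of $C_i$, and, on each edge $ij$, to the single vertex $(ji)_2\in C_j$; a bubble $(ij)_1$ is adjacent precisely to $i$, to all of $C_i$, to $(ji)_2\in C_j$, to the bubbles $(ik)_1$ at $i$ ($k\ne i$), and to the opposite bubble $(ji)_1$; and the only adjacencies among crust vertices are the pairs $(ij)_2\sim(ji)_2$.

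Now I would show that almost the whole colouring is forced. By Lemma~\ref{l_crust} each crust is monochromatic, and $f(i):=\varphi(C_i)$ is a proper colouring of $K_t$, so after renaming colours we may take $\varphi(C_i)=i$ for all $i$, leaving $\star$ as the last colour. Since $i$ sees a crust vertex of colour $j$ for every $j\in[t]$, it must get $\varphi(i)=\star$. The clique on $i$, the $t-1$ bubbles at $i$, and one vertex of $C_i$ has $t+1$ vertices, so the bubbles at $i$ receive $t-1$ distinct colours; being forbidden $\star$ and $i$, they use exactly $[t]\setminus\{i\}$, bijectively. Writing $\varphi_i\colon[t]\setminus\{i\}\to[t]\setminus\{i\}$ for $\varphi_i(j):=\varphi((ij)_1)$, the adjacency $(ij)_1\sim(ji)_2$ (colour $j$) forces $\varphi_i(j)\ne j$, so $\varphi_i$ is a derangement, and $(ij)_1\sim(ji)_1$ forces $\varphi_i(j)\ne\varphi_j(i)$. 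Conversely, given any derangements $\varphi_i$ of $[t]\setminus\{i\}$ with $\varphi_i(j)\ne\varphi_j(i)$ for all $i\ne j$, colouring the bubbles by $\varphi$, the branch vertices by $\star$, and $C_i$ by $i$ is proper: every adjacent pair either involves a branch vertex (the unique $\star$-coloured vertex in its neighbourhood), is a pair $(ij)_2\sim(ji)_2$ of distinct colours $i\ne j$, is a bubble--crust pair (handled because $\varphi_i$ avoids $i$ and $\varphi_i(j)\ne j$), or is a bubble--bubble pair (handled because $\varphi_i$ is injective and $\varphi_i(j)\ne\varphi_j(i)$).

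It remains to construct such a family $(\varphi_i)_{i\in[t]}$ for every $t\ge 4$. Note that, unlike in a Latin square, there is no constraint relating $\varphi_i(j)$ to $\varphi_{i'}(j)$ for $i\ne i'$, which leaves considerable room. When $\gcd(t,6)=1$, identify $[t]$ with $\mathbb{Z}_t$ and set $\varphi_i(j)=2i-j$: this is a permutation of $[t]\setminus\{i\}$, it is a derangement since $2i-j=j$ forces $i=j$, and $\varphi_i(j)=\varphi_j(i)$ would give $3(i-j)=0$, hence $i=j$. For the remaining orders --- the even $t$, and the odd multiples of $3$, where no affine rule over $\mathbb{Z}_t$ can serve --- I would give a direct construction (for instance building the $\varphi_i$ greedily in index order, so that at step $i$ one matches the indices $j$ to the colours $[t]\setminus\{i\}$ avoiding $j$ and, for $j<i$, the already-fixed value $\varphi_j(i)$, or by adapting a non-affine idempotent quasigroup of that order), checking the handful of genuinely small cases such as $t\in\{6,8,9\}$ by hand. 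Producing a single clean construction valid for all $t\ge4$, rather than a short case analysis, is the step I expect to require the most care; everything before it is dictated by the clique structure together with Lemma~\ref{l_crust}.
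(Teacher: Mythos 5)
Your reduction is sound and, in fact, isolates the combinatorial core of the problem nicely: with $m=3$, $n=5$ the middle parts are indeed empty, your adjacency list in $G^{\frac{1}{5}}$ is correct, and the verification that ``branch vertices $\mapsto\star$, $C_i\mapsto i$, $(ij)_1\mapsto\varphi_i(j)$'' is proper exactly when each $\varphi_i$ is a derangement of $[t]\setminus\{i\}$ with $\varphi_i(j)\neq\varphi_j(i)$ is complete and matches the clique bound $\omega=t+1$. The affine construction $\varphi_i(j)=2i-j$ for $\gcd(t,6)=1$ is also correct.

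The gap is the remaining existence statement, which you leave essentially unproved: all even $t$ and all multiples of $3$ are uncovered, and this is an infinite family (including the very first case $t=4$), not ``a handful of genuinely small cases.'' Moreover, the fallback you propose is exactly where the difficulty sits. Building the $\varphi_i$ greedily in index order is a bipartite matching problem in which each bubble at $v_i$ has up to two forbidden colours (the opposite crust colour $j$ and, for $j<i$, the already fixed $\varphi_j(i)$), and Hall's condition can genuinely fail for the last one or two vertices: the paper's own proof runs this same greedy scheme and must supplement it with a colour-switching repair step (recolouring two previously coloured bubbles at $v_1$) precisely in the cases $i=r-1$ and $i=r$; without an argument of that kind, ``greedy in index order'' is an assertion, not a proof. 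Likewise, an idempotent quasigroup of order $t$ gives you conditions (1)--(3) for $F(i,j)=i*j$, but not the antisymmetry $i*j\neq j*i$; you would need an idempotent \emph{anti-commutative} quasigroup of every order $t\geq4$, which is an additional nontrivial existence claim you neither prove nor cite. So as written the proposal proves the lemma only for $t$ coprime to $6$; to close it you must either carry out the Hall-plus-switching analysis (essentially reproducing the paper's argument inside your cleaner framework) or supply a genuine construction of the derangement family for even $t$ and for multiples of $3$.
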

	
\begin{proof}
We show that we can color $G^{\frac{3}{5}}$ with $\omega(G^{\frac{3}{5}})=\Delta(G)+2$.
If $\Delta(G)=3$, then $G=K_4$, for which we give a coloring on Figure~\ref{K4}. Let $v_1,\dots,v_r$ be vertices of $G$. For a fixed $\Delta(G)\geq 4$, we construct a coloring $\varphi:V(G^{\frac{3}{5}})\rightarrow \{0,\dots,\Delta+1\}$ as follows. First, we color   all branch vertices of $G^{\frac{3}{5}}$ with $0$. Second, we let $\varphi(v)=i$ for every vertex of the same crusts $C_{v_i}$. Finally, we need to color neighbors of $v_1,\dots,v_{r}$.

Observe that all the vertices of distance $3$ from a branch vertex $v_i$ have mutually different colors $1,\dots, i-1, i+2, \dots, \Delta+1$, which have to be used for the neighbors of $v_i$.


Suppose that we colored all the neighbors of $v_1, \dots, v_{i-1}$ and we want to color $v_i$. We construct an auxiliary bipartite graph $(A_i,B_i)$ with parts $A_i:=\{(v_iv_j)_1: j\in [\Delta+1]\setminus\{i\}\}$ and  $B_i:=[\Delta+1]\setminus\{i\}$.
 There is an edge between $(v_iv_j)_1\in A_i$ and $b\in B_i$ if and only if $b$ is not forbidden for $(v_iv_j)_1$, i.e. if $b$ can be used on $(v_iv_j)_1$ such that the resulting coloring remains proper. Each vertex $(v_iv_j)_1$ has one or two forbidden colors: the color $\varphi((v_iv_j)_3)=j$, and in case  $j<i$ also the color $\varphi((v_iv_j)_4)$. We show that either $(A_i,B_i)$ has a perfect matching or that we can switch colors some vertices and redefine $(A_i,B_i)$ so that the new $(A_i,B_i)$ has a perfect matching. By  Hall's Theorem [ref!!!], $(A_i,B_i)$ has a perfect matching if and only if $|S|\leq |N(S)|$ for all $S\subseteq A_i$.

 Since every vertex of $A_i$ has degree at least $\Delta-2$ in $(A_i,B_i)$, $|N(S)|\geq \Delta-2$, so we $S$ can only violate the Hall's condition if $|S|=\Delta-1$ or $|S|=\Delta$.

Suppose first that $|S|=\Delta-1$. Since $\Delta\geq 4$, there are three vertices $(v_i v_j)_1$, $(v_i v_k)_1$, $(v_i v_l)_1$ in $S$. If $|N(S)|=\Delta-2$, then each of these vertices has the same two forbidden colors $b_1,b_2 \in B_i$. But this is not possible since the forbidden colors $j,k,l$ for $(v_i v_j)_1$, $(v_i v_k)_1$, $(v_i v_l)_1$, respectively, are pairwise distinct.

Let now $|S|=\Delta$, so $S=A_i$.
We claim that if $i\in [r-2]$, then $N(S)=B_i$.  Indeed, since $r-1$ and $r$ are the only forbidden colors for $(v_iv_{r-1})_1$ and $(v_iv_r)_1$, respectively, and these are distinct, we conclude that $N(\{(v_i v_{r-1})_1,(v_iv_r)_1\})=B_i$ and thus $N(S)=B_i$.

Let now $i=r-1$. If a perfect matching does not exist in $(A_i,B_i)$, then $|N(A_i)|=\Delta-1$. Since the only forbidden color for $(v_i v_{r})_1$ is $r$, we have $\varphi((v_iv_j)_4)=r$, or equivalently $\varphi((v_jv_i)_1)=r$, for all $j\in [\Delta]\setminus \{i\}$. Let $k\in [\Delta]\setminus \{i,r-1,r\}$ such that $\varphi((v_1v_k)_1)\neq r-1$. Then
we switch colors on vertices $(v_1v_i)_1$ and $(v_1v_k)_1$ (see Figure~\ref{complete1}). For the redefined bipartite graph $(A_i,B_i)$ we then have $|N(A_i)|=|A_i|$, which means that we can find a perfect matching in $(A_i,B_i)$.

Finally, let $i=r$, and suppose $N(A_i)=\Delta-1$. Let $b\in B_i\setminus  N(A_i)$, so $b$ is forbidden for every vertex in $A_i$. Exactly one vertex $(v_i v_b)_3$ of $\{(v_i v_j)_3:j=[\Delta]\setminus \{i\}\}$ has color $b$. So, all vertices of $\{(v_i v_j)_4:j=[\Delta]\setminus \{i,b\}\}$ have color $b$. Let $l\in [\Delta]\setminus \{i,b,r\}$ such that $\varphi((v_1v_l)_1)\neq r$. Then
we switch colors on vertices $(v_1v_i)_1$ and $(v_1v_l)_1$ to obtain a new graph $(A_i,B_i)$ with $|N(A_i)|=|A_i|$ as needed. See Figure~\ref{complete2}.
\end{proof}
\begin{figure}[ht]
  \begin{center}
 \subfloat[$5$-coloring of $(K_4)^{\frac{3}{5}}$.]{\label{K4}
\includegraphics[width=4.7cm]{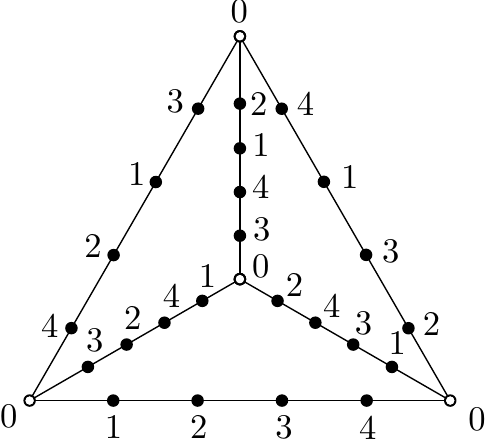}}
  \hspace{0cm}
  \subfloat[$i=r-1$]{\label{complete1}
    \includegraphics{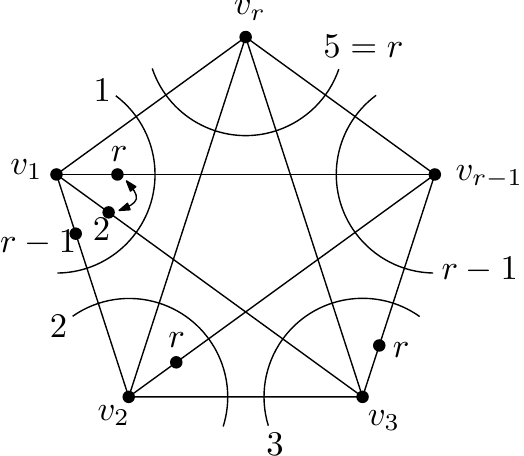}}
     \hspace{0cm}
      \subfloat[$i=r$]{\label{complete2}
    \includegraphics{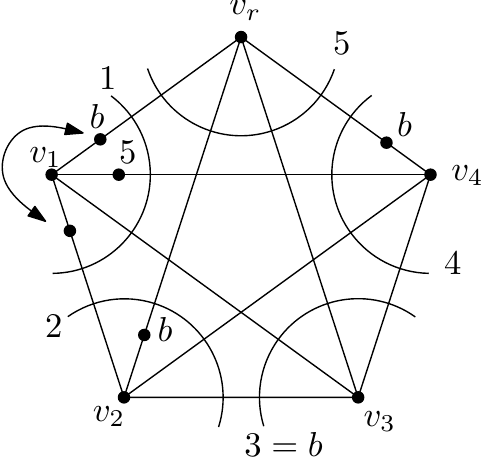}}
      \caption{}\label{fig_complete}
  \end{center}
\end{figure}
\begin{thm}
If $G$ is a complete graph, then $\chi(\Gmn)=\omega(\Gmn)$.
\end{thm}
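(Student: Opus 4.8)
The plan is to reduce to the one genuinely new case and then build an explicit colouring refining the one in Lemma~\ref{comp1}. Since Conjecture~\ref{conj1} concerns graphs of maximum degree at least $3$, we take $G=K_r$ with $r\ge 4$; for $K_1$ and $K_2$ the graphs $\Gmn$ are a single vertex and a power of a path, and $K_3$ lies outside the scope, so these need no argument here. When $m$ is even, $\chi(\Gmn)=\omega(\Gmn)$ already follows from Theorems~\ref{miseven} and~\ref{meven}, so we may assume $m$ is odd, say $m=2p+1$ with $p\ge 1$; then each bubble has $p$ vertices, the crusts sit at distance $p+1$ from their branch vertex, and $\omega(\Gmn)=p(r-1)+2$. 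By Lemma~\ref{range} it is enough to produce, for every $n$ with $m+2\le n\le 2m+1$, a proper colouring of $\Gmn$ using exactly $p(r-1)+2$ colours (note that the earlier theorem covering $\tfrac{3m+5}{2}\le n\le 2m$ is stated only for non-complete graphs, so it does not help here).

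The colouring follows the template of Lemma~\ref{comp1}. We put colour $0$ on every branch vertex and, guided by Lemma~\ref{l_crust}, colour each crust $C_{v_i}$ monochromatically with a colour $\gamma_i$, the $\gamma_i$ chosen pairwise distinct among the $\omega(\Gmn)-1\ge r$ non-zero colours. It then remains to colour the bubbles and the middle parts. For the bubbles we process the branch vertices in an order $v_1,\dots,v_r$: having coloured all bubbles incident to $v_1,\dots,v_{i-1}$ we colour the $r-1$ bubbles at $v_i$. The clique formed by $v_i$, its bubbles, and one vertex of $C_{v_i}$ must be rainbow, so the $p(r-1)$ bubble vertices at $v_i$ must receive $p(r-1)$ distinct non-zero colours, all different from $\gamma_i$; moreover, on a superedge $v_iv_j$ with $j<i$, and on every superedge when $n$ is close to $m+2$, some of these vertices additionally have to avoid colours already forced at the far end of the superedge, since the two ends then lie within distance $m$ in $G^{\frac1n}$. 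We encode the admissible colourings of the bubbles at $v_i$ by a bipartite graph whose parts are the $p(r-1)$ bubble-slots and the $\omega(\Gmn)-1$ non-zero colours, an edge recording that a colour is not forbidden for a slot, and we look for a matching saturating the slots. Hall's theorem applies, and in the few configurations where Hall's condition fails we repair it by small local colour swaps among already-coloured bubbles, exactly as in the $i=r-1$ and $i=r$ cases of Lemma~\ref{comp1}.

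It remains to colour the middle parts. Writing $M_{uv}$ as the merge of $M_{uv}[1:k]$ and $\overline{M_{vu}[1:l]}$ with $k=\lceil\tfrac12|M_{uv}|\rceil$ and $l=\lfloor\tfrac12|M_{uv}|\rfloor$, we extend colours inward from the two bubbles (or crusts) using reversed colour-tuples chosen outside the relevant forbidden colours, exactly as in the proofs of Theorems~\ref{meven} and~\ref{odd2}. Because $n\le 2m+1$, each middle part is short, and a routine distance check then shows that any two vertices of equal colour are at distance greater than $m$ in $G^{\frac1n}$; together with the colouring of branch vertices, crusts and bubbles this yields a proper colouring of $\Gmn$ with $p(r-1)+2=\omega(\Gmn)$ colours.

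I expect the bubble step to be the main obstacle. For general odd $m$ a bubble is a whole $p$-tuple rather than a single vertex as in Lemma~\ref{comp1}, so the forbidden set of a superedge-slot is a union of several colours, coming both from the crust colours $\gamma_i,\gamma_j$ and---when $n$ is small---from bubble vertices at the opposite branch vertex; checking that Hall's condition holds, and designing the swapping moves that fix the bad configurations, must be done uniformly over the whole range $m+2\le n\le 2m+1$ and down to the tight case $r=4$, where only $\omega(\Gmn)-1=3p+1$ non-zero colours are available. A secondary difficulty is making the bubble colourings at the two ends of each superedge simultaneously compatible with the chosen middle-part colouring, which may force the processing order $v_1,\dots,v_r$, or the repair swaps, to be chosen with some care.
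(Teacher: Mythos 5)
Your write-up is a plan rather than a proof, and the plan leaves exactly the hard part undone. The paper does \emph{not} generalize the Hall's-theorem argument of Lemma~\ref{comp1} to arbitrary odd $m$; it uses that lemma only as the base case $m=3$ of an induction on $m$. For $m\ge 5$ it builds a proper coloring of $G^{\frac{m}{m+2}}$ by stacking a coloring of $G^{\frac{2}{3}}$ (Lemma~\ref{G23}, with $\Delta$ fresh colors, $0$ exactly on branch vertices) on the inductively obtained coloring of $G^{\frac{m-2}{m}}$: vertices at distance $0,1$ from branch vertices are colored by the $G^{\frac{2}{3}}$ coloring, all deeper vertices by the shifted $G^{\frac{m-2}{m}}$ coloring. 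The payoff of this layering is that the color sets $S_i$ used at distance $i$ from branch vertices are pairwise disjoint, so for every $n$ with $m+2\le n\le 2m+1$ the bubbles and crusts can be kept as in $G^{\frac{m}{m+2}}$ and each middle-part vertex $M_{uv}[i]$, $M_{vu}[i]$ can be given one of at least two unused colors from $S_i$. No Hall-type analysis for general $m$ is ever needed.

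By contrast, your bubble step --- the bipartite slot/color graph for the $p(r-1)$ bubble vertices at $v_i$, the verification of Hall's condition over the whole range $m+2\le n\le 2m+1$ and down to $r=4$, and the design of the repair swaps --- is precisely what you admit is ``the main obstacle,'' and it is not carried out. In Lemma~\ref{comp1} each slot has at most two forbidden colors, which is what makes the deficiency analysis ($|S|=\Delta-1$ or $\Delta$) and the two explicit swaps work; for general odd $m$ and general $n$ a slot's forbidden set is a union of several colors coming from the crusts and from the already-colored far bubble, and there is no argument given that the deficient sets are still so constrained that local swaps always repair them. Likewise the middle-part step is asserted ``exactly as in Theorems~\ref{meven} and~\ref{odd2},'' but those constructions rest on half-edge colorings and compatibility hypotheses you have not set up here, and you yourself note the compatibility of the two bubble ends with the middle part as a second unresolved difficulty. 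So the proposal has a genuine gap; if you want a complete argument along the paper's lines, prove the layered statement ``$G^{\frac{m}{m+2}}$ has a proper $\bigl(\frac{m-1}{2}\Delta+2\bigr)$-coloring with $0$ exactly on branch vertices'' by induction on $m$ using Lemmas~\ref{comp1} and~\ref{G23}, and then extend to all $n\le 2m+1$ via the disjointness of the distance-layer color sets.
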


\begin{proof}

We only need to consider the case when $m$ is odd since Theorem~\ref{miseven} and Theorem~\ref{meven} can both be applied to complete graphs.

We first show that for all $m\geq 3$, there exists a proper coloring of $\varphi: V(G^{\frac{m}{m+2}})\rightarrow \{0,\dots,\frac{m-1}{2}\Delta+1\}$ such that the vertices colored $0$ are exactly the branch vertices. We use induction on $m$. The base case is exactly Lemma~\ref{comp1}.

Let now $m\geq 5$. By the induction hypothesis, there exists a proper vertex coloring $\varphi':V(G^{\frac{m-2}{m}})\rightarrow \{0,\dots,\frac{m-3}{2}\Delta+1\}$. Let $f:V(G^{\frac{2}{3}})\rightarrow \{0, \frac{m-3}{2}\Delta+2,\dots, \frac{m-1}{2}\Delta+1\}$ be a proper coloring such that $0$ is used exactly on branch vertices (existing by Lemma~\ref{G23}). Then we define
$$\varphi((uv)_i)=\left\{
\begin{array}{lllllll}
f((uv)_i) &\mbox{ in } G^{\frac{2}{3}} & \mbox{ if } i\in \{0,1\},\\
\varphi'((uv)_{i-1}) & \mbox{ in } G^{\frac{m-2}{m}} & \mbox{ otherwise.}
\end{array}\right.
$$

It is now easy to complete the proof. Suppose that we have to construct a coloring of $\Gmn$ for some $n>m+2$ that uses $\frac{m-1}{2}\Delta+2$ colors. Then we adopt the above coloring $\varphi$ for all $(uv)_i$ with $i=0,\dots, \frac{m+1}{2}$, so $\varphi$ provides a coloring of bubbles and crusts in $\Gmn$. All we need to do now is to color the middle parts.
By Lemma~\ref{range}, we can assume that $m+2\leq n\leq 2m+1$. This means that $0\leq |M_{uv}|\leq m-1$. For $i=1,\dots, \lceil \frac{1}{2}|M_{uv}|\rceil $, let $S_i$ be the set of colors used on vertices of distance $i$ from $u$ or from $v$ in $G^{\frac{1}{n}}$. By construction of $\varphi$ we have $S_i\cap S_j=\emptyset$ if $i\neq j$. Therefore, for each $P_{uv}$ there exist at least $2$ colors $c_1$, $c_2$ in $S_i$ that are not used on $P_{uv}$.  Then we define $\varphi(M_{uv}[i])=c_1$ and $\varphi(M_{vu}[i])=c_2$ (if $M_{vu}[i]$ exists).

\end{proof}

	\section{Dynamic coloring}\label{S_dynamic}

In this section we assume that $G$ is a graph with maximum degree $\Delta\geq 4$ and that $m$ is odd. Recall from Section~\ref{S_Odd} that if there exist compatible colorings $f:V(G)\rightarrow [\Delta]$ and $h: E(\Got)\rightarrow[\Delta]$ and $\Delta(G)\geq 5$, then $\chi(\Gmn)=\omega(\Gmn)$ (Corollary~\ref{compatible_cor}).
The purpose of this section is to find a sufficient condition for existence of compatible colorings $f$ and $h$ for $G$. An \emph{$r$-dynamic proper $k$-coloring} of a graph $G$ is a proper
coloring $g:V(G)\rightarrow[k]$ such that for every vertex $v\in V(G)$ the number of colors used on $N(v)$ is at least $\min\{r,d(v)\}$.

\begin{lem}\label{lem_dyn}
Let $G$ be a graph with maximum degree $\Delta\geq 4$. For every $4$-dynamic proper $\Delta$-coloring $f$ of $G$ there exists a  coloring $h:E(\Got)\rightarrow [\Delta]$ such that $f$ and $h$ are compatible.
\end{lem}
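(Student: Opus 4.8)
### Proof plan

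\textbf{Setup and the obstruction to overcome.} The goal is: given a $4$-dynamic proper $\Delta$-coloring $f:V(G)\to[\Delta]$, produce a half-edge coloring $h:E(\Got)\to[\Delta]$ that is \emph{compatible} with $f$, i.e.\ $h(e_{uv})\neq f(v)$ for every directed half-edge $e_{uv}$. Recall that a half-edge coloring of $G$ is exactly a proper edge coloring of $\Got$, equivalently a proper coloring of the line graph of $\Got$, equivalently a proper coloring of the internal vertices of $G^{\frac{2}{3}}$ (Lemma~\ref{G23}, Lemma~\ref{L_half}). The compatibility constraint at a branch vertex $u$ says: among the $d(u)\le\Delta$ half-edges $e_{uv_1},\dots,e_{uv_{d(u)}}$ sitting at $u$ (which must all get distinct colors, since they are mutually adjacent in $\Got$), the half-edge $e_{uv_j}$ must avoid the single color $f(v_j)$. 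So locally at $u$ we are choosing a proper coloring of a clique of size $d(u)$ with $\Delta$ colors where vertex $j$ has one forbidden color $f(v_j)$ — and the key point is that the forbidden colors $f(v_1),\dots,f(v_{d(u)})$ are \emph{not all distinct} in general; the $4$-dynamic hypothesis guarantees at least $\min\{4,d(u)\}$ distinct values among them.

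\textbf{Reformulation as a list-coloring / $f$-choosability problem.} The plan is to view this as list edge coloring of $\Got$: assign to each half-edge $e_{uv}$ the list $L(e_{uv})=[\Delta]\setminus\{f(v)\}$ of size $\Delta-1$, and ask for a proper edge coloring of $\Got$ from these lists. Since $\Got$ has maximum degree $\Delta$ (each branch vertex of degree $d(u)$ sits on $d(u)$ half-edges; internal vertices of $\Got$ have degree $2$), the line graph $L(\Got)$ has clique number $\Delta$ and, more to the point, the relevant cliques are exactly the stars at branch vertices. I would set this up as a proper coloring of $L(\Got)$ where each vertex has a list of size $\Delta-1$. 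A bare list size of $\Delta-1$ for a graph with cliques of size $\Delta$ is of course not automatically colorable — this is where the $4$-dynamic hypothesis must be used: it says that at each branch-vertex star, the multiset of forbidden colors takes at least $4$ distinct values, hence at least $4$ of the lists are "short in different places", so the union of any $t$ of the lists in a star has size at least $\min\{\Delta, (\Delta-1)+(\text{extra from variety})\}$ — concretely the union of all $d(u)$ lists at $u$ is all of $[\Delta]$ as soon as two of the $f(v_j)$ differ, and Hall-type deficiency can only be created by many lists sharing the \emph{same} hole.

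\textbf{Main argument: greedy/augmenting along the half-edge structure.} I would color the half-edges branch-vertex by branch-vertex (or: first color the $e_{uv}$-halves for one endpoint of each superedge, then the other), maintaining properness. At a branch vertex $u$ we must assign distinct colors to its $d(u)$ half-edges, each avoiding its forbidden color $f(v_j)$ and any colors already committed at $u$ from the other side (but a half-edge $e_{uv}$ is only constrained at $u$, never at $v$ — the two halves of a superedge $uv$ are at distance $\ge 2$ in $\Got$ when... no: $(uv)_1$ and $(uv)_{n-1}$—wait, in $\Got$ the superedge has length $2$, so $e_{uv}$ and $e_{vu}$ share the midpoint and \emph{are} adjacent). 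So the honest statement: the two half-edges of a superedge must differ, and all half-edges at a common branch vertex must be mutually distinct; that is the whole constraint graph. The clean way: build a bipartite-matching argument at each branch vertex $u$ — one side the half-edges at $u$, other side $[\Delta]$, with $e_{uv_j}$ joined to all colors except $f(v_j)$ and except the already-fixed color of its partner half-edge $e_{v_ju}$ if that superedge's far half is already colored. Process superedges in an order (e.g.\ an orientation of $G$); when we reach $u$, at most... the partner constraints remove at most one more color per half-edge, giving each half-edge a list of size $\ge \Delta-2$, and the star is a clique of size $d(u)\le\Delta$. Apply Hall: a violating set $S$ of half-edges at $u$ needs $|N(S)|<|S|$; since each list has size $\ge\Delta-2$ we need $|S|\in\{\Delta-1,\Delta\}$, and for the neighborhoods to collapse, many half-edges in $S$ must share \emph{both} missing colors, forcing many equal values among the $f(v_j)$ — contradicting $4$-dynamicity (which bounds how many of the $f(v_j)$ can coincide: at most $d(u)-3$, since $\ge 4$ distinct values). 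The hard part, and the step I expect to be the real obstacle, is handling the \emph{partner} constraint cleanly: it is not symmetric (only the far half-edge matters, and only if already colored), so a naive one-pass greedy can paint itself into a corner on a superedge both of whose halves become over-constrained. I would resolve this either by (i) a Vizing-style augmenting-path/Kempe-chain recoloring when Hall fails at some $u$ — swapping a $2$-colored alternating chain through the already-colored halves to free up a color, which terminates because chains cannot close up into the right kind of bad cycle (cf.\ the bad-cycle analysis in Lemma~\ref{lem1}–\ref{lem2}) — or (ii) coloring \emph{both} halves of every superedge simultaneously via a global list-edge-coloring of $\Got$ with lists of size $\Delta-1$, invoking that $\Got$ is (essentially) a disjoint union of stars glued at degree-$2$ vertices, a structure for which list edge coloring with $\Delta-1$ colors succeeds precisely under the variety condition supplied by $4$-dynamicity. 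I would write up version (ii): reduce to, for each branch vertex independently, a system-of-distinct-representatives / Hall condition on $\Delta$-element ground set with $d(u)$ lists of size $\Delta-1$ whose holes take $\ge 4$ values, which Hall's theorem certifies is solvable, and then observe the only inter-vertex coupling is along superedges, which can be decoupled by a $2$-SAT-free argument since each superedge couples exactly two such local solutions through a single equality/inequality.
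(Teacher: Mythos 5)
Your setup (half-edge lists $L(e_{uv})=[\Delta]\setminus\{f(v)\}$ and a vertex-by-vertex Hall/matching argument at each branch vertex) matches the paper's strategy, but there is a genuine gap at exactly the point you yourself flag as ``the real obstacle,'' and neither of your two proposed fixes closes it. First, your claim that a Hall violation at $u$ ``forces many equal values among the $f(v_j)$, contradicting $4$-dynamicity'' is false in the case $|S|=d(u)=\Delta$: there the violation means a single color $b$ is missing from every list, and for each neighbor $v_j$ this can be caused either by $f(v_j)=b$ or by the already-colored partner half-edge having $h(e_{v_ju})=b$. Nothing forbids many partner half-edges at distinct neighbors from all carrying the same color $b$ (they are pairwise non-adjacent in $\Got$), so $4$-dynamicity alone gives no contradiction; it only settles the case $|S|=\Delta-1$, where each half-edge has at most two blocked colors and three pairwise distinct $f$-values among the relevant neighbors rule out a common blocked pair. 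Second, the version you say you would actually write up, (ii), rests on the assertion that the locally solvable SDR problems at the branch vertices ``can be decoupled by a 2-SAT-free argument since each superedge couples exactly two such local solutions through a single equality/inequality.'' That is an unsubstantiated claim, and it is precisely the whole difficulty: choosing one local perfect matching per vertex subject to the constraint $h(e_{uv})\neq h(e_{vu})$ on every superedge is a global problem, and you give no reason why consistent local choices exist.

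The paper's proof spends its main effort exactly here: it colors the stars sequentially, and when Hall fails with one color $b$ blocked at every half-edge around $u$, it repairs the partial coloring by a switching chain. Using $4$-dynamicity it finds $x_1\in N_G(u)$ with $f(x_1)\neq b$ (hence $h(e_{x_1u})=b$), then iteratively picks $x_{i+1}\in N_G(x_i)$ with $f(x_{i+1})\neq b$ and $h(e_{x_ix_{i+1}})\neq f(x_{i-1})$, and once it reaches $x_k$ with $h(e_{x_kx_{k-1}})\neq b$ it shifts colors along the chain; a short argument shows the chain cannot revisit a vertex, so the process terminates, after which the matching at $u$ exists. Your alternative (i) gestures at such a Kempe-style repair but gives no construction of the chain, no use of $4$-dynamicity to keep the swaps proper and compatible, and no termination argument (the analogy with Lemmas~\ref{lem1}--\ref{lem2} does not transfer: those concern two-colored cycles in cubic graphs, not partially colored half-edge stars with forbidden $f$-values). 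As written, the proposal does not constitute a proof.
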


\begin{proof}
We subsequently color half-edges around each vertex, in any fixed order of vertices of $G$. Suppose that we want to color the half-edges around $u$. We construct an auxiliary bipartite graph $(A,B)$ with parts $A:=\{e_{u v}:v\in N(u)\}$ and  $B:=[\Delta+1]\setminus\{f(u)\}$. There is an edge between $e_{uv}\in A$ and $b\in B$ if and only if $e_{uv}$ can be colored with $b$, i.e. exactly when $b\neq f(u)$ and $b\neq h(e_{vu})$ (in case $e_{vu}$ is already colored). We show that either $(A,B)$ has a perfect matching or that we can switch colors on some half-edges and redefine $(A,B)$ so that the new $(A,B)$ has a perfect matching. If $|N(S)|\geq |S|$ for all $S\subseteq A$, then the existence of a perfect matching is ensured by Hall's theorem. So suppose that there exists $S\subseteq A$ such that $|N(S)|<|S|$. Every vertex of $A$ has degree at least $\Delta-2$, where $\Delta\geq 4$. So $|N(S)|\geq \Delta-2$ and therefore $|S|\geq \Delta-1$.

First, suppose that $|S|=\Delta-1$. Let $e_{uv'}\in A\setminus S$. Since $f$ is $4$-dynamic, there are at least four colors on $N_G(v)$. This implies that there are three vertices $v_1$, $v_2$, $v_3$ in $N_G(v)\setminus \{v'\}$ with pairwise different colors. If $|N(S)|\leq \Delta-2$, then there are $b_1,b_2\in B\setminus N(S)$. But then one of the vertices $v_1$, $v_2$, $v_3$ is colored neither $b_1$ nor $b_2$. Therefore one of $b_1,b_2$ is not forbidden for that vertex, a contradiction.

Second, suppose that $|S|=\Delta$, i.e. $|A|>|N(A)|$. Since then $|N(A)|=\Delta+1$, there is a color $b\in B_i$ that is forbidden for every vertex of $A_i$. So, for every $e_{uv}\in A$, either $f(v)=b$ or $h(e_{vu})=b$ (if defined). Since $f$ is $4$-dynamic, there exists $x_1\in N_G(v)$ such that $f(x_1)\neq b$. Then necessarily $h(e_{x_1u})= b$.  Let now $x_2\in N_G(x_1)\setminus\{u\}$ such that $f(x_2)\neq b$ and $h(e_{x_1x_2})\neq f(u)$. Such a vertex exists since $f$ is $4$-dynamic. If $h(e_{x_2x_1})\neq b$, then we can switch the colors on $e_{x_1 u}$ and $e_{x_1 x_2}$. Then the new graph $(A,B)$ has a perfect matching and the so far defined coloring $h$ is proper and compatible with $f$. If otherwise $h(e_{x_2x_1})=b$, then we search for $x_3\in N_G(x_2)\setminus\{x_1\}$ such that $f(x_3)\neq b$ and $h(e_{x_2x_3})\neq f(x_1)$, and check whether or not $h(e_{x_3x_2})\neq b$. We repeat this process until we find a vertex $x_k$ with $h(e_{x_kx_{k-1}})\neq b$. Then we switch colors on $e_{x_i x_{i-1}}$ and $e_{x_ix_{i+1}}$ for all $i=1,\dots,k-1$, where $x_0=u$.
The process has finitely many steps since $|V(G)|$ is finite and we cannot have a loop. Indeed, suppose that we found a sequence $x_1, x_2, \dots, x_i, x_{i+1}, \dots, x_{j}, x_i,\dots$. Then $h(e_{x_{i-1}x_i})=b$ and $h(e_{x_jx_i})=b$ since otherwise the sequence would end at $x_{i-1}$ and $x_j$, respectively. But $e_{x_{i-1}x_i}$ and $e_{x_jx_i}$ are adjacent and cannot have the same color, a contradiction.
\end{proof}
	
	\begin{figure}[ht]
  \begin{center}
  \subfloat[Switching along the path $u x_1 x_2 \dots x_k$.]{\label{dynamic1}
    \includegraphics{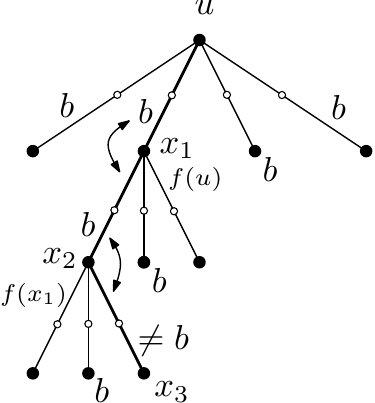}}
     \hspace{3cm}
      \subfloat[There is no loop.]{\label{dynamic2}
    \includegraphics{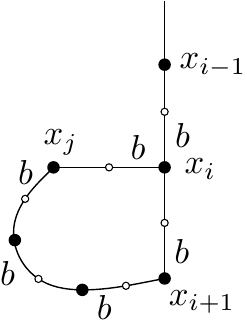}}
      \caption{Coloring half-edges around $u$.}\label{dynamic}
  \end{center}
\end{figure}	
\begin{rem}
Lemma~\ref{lem_dyn} holds under a weaker assumption that $f$ is $3$-dynamic and such that for every vertex $v$ with exactly three colors used on $N(v)$, at least two colors are used twice on $N(v)$.
\end{rem}	

The following Theorem by Jahanbekam, Kim, O, and West~\cite{Sogol}, \cite[Corollary 5.2.6]{O} provides sufficient conditions for a graph to be $r$-dynamic proper $k$-colorable.
\begin{thm}
 If G is a $\Delta$-regular graph with $\Delta\geq 7r  \ln r$, then there exists $r$-dynamic proper $(r\chi(G))$-coloring of $G$.
\end{thm}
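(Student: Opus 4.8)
The plan is to build the coloring from an ordinary proper coloring of $G$ by randomly ``splitting'' each color into $r$ copies, and then to repair the dynamic condition with the Lov\'asz Local Lemma. Fix a proper coloring $\psi\colon V(G)\to[\chi]$ with $\chi=\chi(G)$, and, independently for each vertex $v$, choose a label $\ell(v)\in[r]$ uniformly at random. Define $\Psi(v)=(\psi(v),\ell(v))$, so $\Psi$ uses at most $r\chi$ colors. Then $\Psi$ is proper for free, since adjacent vertices already differ in the first coordinate. As $G$ is $\Delta$-regular and $\Delta\ge 7r\ln r>r$, the $r$-dynamic requirement reduces to a single demand: for every $v$, the neighborhood $N(v)$ must receive at least $r$ distinct colors under $\Psi$.

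For a vertex $v$ let $A_v$ be the bad event that fewer than $r$ colors occur on $N(v)$. The key point is deterministic: if every label $j\in[r]$ appears among $\{\ell(u):u\in N(v)\}$, then $A_v$ does not occur --- choosing for each $j$ a neighbor $u_j$ with $\ell(u_j)=j$, the colors $\Psi(u_1),\dots,\Psi(u_r)$ have pairwise distinct second coordinates and hence are $r$ distinct colors of $N(v)$, whatever the values $\psi(u_j)$ are. Thus $A_v$ is contained in the event that some label is missing from $N(v)$, and since the labels are independent and uniform,
\[
 \Pr[A_v]\ \le\ \sum_{j=1}^{r}\Pr[\text{no }u\in N(v)\text{ has }\ell(u)=j]\ =\ r\bigl(1-\tfrac{1}{r}\bigr)^{\Delta}\ \le\ r\,e^{-\Delta/r}.
\]

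Finally I would apply the symmetric Local Lemma. The event $A_v$ depends only on $\{\ell(u):u\in N(v)\}$, so it is mutually independent of every $A_w$ with $N(w)\cap N(v)=\emptyset$; since $N(w)\cap N(v)\neq\emptyset$ forces $w$ to lie within distance $2$ of $v$, each $A_v$ depends on at most $\Delta^{2}-1$ other events, and it suffices to verify $e\cdot\Pr[A_v]\cdot\Delta^{2}\le 1$. Using the bound above, this reads $e\,r\,e^{-\Delta/r}\Delta^{2}\le 1$; the logarithm of the left side, $1+\ln r-\Delta/r+2\ln\Delta$, is decreasing in $\Delta$ for $\Delta>2r$, and $7r\ln r>2r$, so over the relevant range $\Delta\ge 7r\ln r$ the left side is maximized at $\Delta=7r\ln r$, where it equals $49e(\ln r)^{2}/r^{4}$. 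Once this is at most $1$, the Local Lemma guarantees that with positive probability no $A_v$ holds, and any such outcome makes $\Psi$ an $r$-dynamic proper coloring of $G$ with at most $r\chi(G)$ colors. This last numerical verification is where I expect the real difficulty to lie, and it is precisely where the hypothesis $\Delta\ge 7r\ln r$ is consumed: for all but the smallest values of $r$ the displayed quantity is below $1$ immediately, but for those small $r$ the union bound on $\Pr[A_v]$ is too lossy and must be sharpened using that properness of $\psi$ already limits the colors on $N(v)$, and tuning the constant $7$ so that the argument goes through uniformly is the only genuinely delicate step; everything upstream is essentially forced.
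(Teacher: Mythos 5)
This theorem is quoted in the paper from Jahanbekam--Kim--O--West and from O's dissertation; the paper itself contains no proof, so your argument can only be judged on its own merits. Your route (refine an optimal proper coloring $\psi$ by independent uniform labels in $[r]$ and apply the symmetric Local Lemma to the events ``some label missing from $N(v)$'') is the natural probabilistic approach and is structurally sound: the reduction of the $r$-dynamic condition to ``all $r$ labels appear on $N(v)$'' is correct, as is the variable-based dependency bound (though the count should be $\Delta+\Delta(\Delta-1)=\Delta^2$ neighbours, i.e. you need $e\,p\,(\Delta^2+1)\le 1$, not $\Delta^2-1$; this is harmless).

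The genuine gap is exactly the step you flag and then wave off. With your bound $\Pr[A_v]\le r e^{-\Delta/r}$, the quantity $49e(\ln r)^2/r^4$ at $\Delta=7r\ln r$ is about $4.0$ for $r=2$ and about $2.0$ for $r=3$, so the stated LLL condition simply fails there (e.g. $r=2$, $\Delta=10$ gives $e\cdot 2e^{-5}\cdot 101\approx 3.7$), and these cases are not vacuous. Moreover, the repair you propose --- sharpening $\Pr[A_v]$ ``using that properness of $\psi$ already limits the colors on $N(v)$'' --- cannot work in general: for a $\Delta$-regular bipartite $G$ with $\psi$ a $2$-coloring, $N(v)$ is monochromatic under $\psi$, so $A_v$ \emph{equals} the missing-label event and properness gives nothing. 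The loss is instead in the inequality $(1-1/r)^\Delta\le e^{-\Delta/r}$, which is far from tight for small $r$: keeping $\Pr[A_v]\le r(1-1/r)^\Delta$, one checks directly that $e\,r(1-1/r)^{\Delta}(\Delta^2+1)<1$ at $\Delta=\lceil 7r\ln r\rceil$ for $r=2,3,4$ (the expression is decreasing in $\Delta$, so this covers all admissible $\Delta$), while your computation already handles $r\ge 5$ since $49e(\ln r)^2/r^4$ is decreasing and is about $0.55$ at $r=5$; the case $r=1$ is trivial. With that substitution your argument closes; as written, the small-$r$ verification is missing and the suggested fix is a dead end.
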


We seek $4$-dynamic proper $\Delta$-coloring of $G$. For that we need $\Delta\geq \lceil 28 \ln 4 \rceil =39$ and $\chi(G)\leq \frac{\Delta}{4}$. The following corollary is immediate.

\begin{cor}
If $G$ is a $\Delta$-regular graph with $\Delta\geq 39$ and $\chi(G)\leq \frac{\Delta}{4}$, then $\chi(\Gmn)=\omega(\Gmn)$.
\end{cor}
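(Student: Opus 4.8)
The plan is to chain together three results that are already in place. The final corollary asserts: if $G$ is $\Delta$-regular with $\Delta \ge 39$ and $\chi(G) \le \frac{\Delta}{4}$, then $\chi(\Gmn) = \omega(\Gmn)$. First I would dispose of the case $m$ even: by Theorem~\ref{miseven} (for $\Delta \ge 4$) and Theorem~\ref{meven} (for $\Delta = 3$), the equality $\chi(\Gmn) = \omega(\Gmn)$ holds unconditionally for even $m$, and since $\Delta \ge 39 \ge 4$ here, this case needs nothing further. So I may assume $m$ is odd, which is exactly the setting of Section~\ref{S_dynamic}.

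Next I would verify the numerical hypotheses needed to invoke the $r$-dynamic coloring theorem of Jahanbekam, Kim, O, and West with $r = 4$. That theorem gives a $4$-dynamic proper $(4\chi(G))$-coloring of $G$ provided $\Delta \ge 7 \cdot 4 \cdot \ln 4 = 28 \ln 4 \approx 38.8$, i.e. $\Delta \ge 39$; this is one of our hypotheses. Moreover, since $\chi(G) \le \frac{\Delta}{4}$, we have $4\chi(G) \le \Delta$, so a $4$-dynamic proper $(4\chi(G))$-coloring is in particular a $4$-dynamic proper $\Delta$-coloring of $G$ (add dummy colors if $4\chi(G) < \Delta$; a coloring using fewer colors still satisfies the dynamic condition with respect to the larger palette since $\min\{4, d(v)\} = 4$ for $d(v) = \Delta \ge 4$ is unaffected). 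Call this coloring $f : V(G) \to [\Delta]$.

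Now I would apply Lemma~\ref{lem_dyn}: since $G$ has maximum degree $\Delta \ge 4$ and $f$ is a $4$-dynamic proper $\Delta$-coloring, there is a half-edge coloring $h : E(\Got) \to [\Delta]$ such that $f$ and $h$ are compatible (in the sense $h(e_{uv}) \ne f(v)$ for every edge $uv$). Finally, $G$ is $\Delta$-regular with $\Delta \ge 39 \ge 5$, and since $\chi(G) \le \frac{\Delta}{4} < \Delta$ the graph $G$ is certainly not complete. Therefore Corollary~\ref{compatible_cor} (of the proof of Theorem~\ref{odd2}) applies with these compatible colorings $f$ and $h$, yielding $\chi(\Gmn) = \omega(\Gmn)$, which completes the proof.

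I do not expect any genuine obstacle here — the corollary is purely a matter of bookkeeping, assembling Theorems~\ref{miseven}/\ref{meven}, the cited $r$-dynamic theorem, Lemma~\ref{lem_dyn}, and Corollary~\ref{compatible_cor}. The only point requiring a moment's care is the arithmetic $\lceil 28 \ln 4 \rceil = 39$ and the observation that a coloring with $4\chi(G) \le \Delta$ colors can be regarded as a $\Delta$-coloring without destroying the $4$-dynamic property; both are routine. (Strictly speaking one should also note that the even-$m$ case is not even needed, since the statement as phrased is the union over all $m$ with $1 < m < n$, but including it makes the corollary self-contained.)
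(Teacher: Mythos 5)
Your argument is correct and is essentially the paper's own (implicit) proof: the corollary is obtained by chaining the Jahanbekam--Kim--O--West theorem with $r=4$ (using $\Delta\ge\lceil 28\ln 4\rceil=39$ and $4\chi(G)\le\Delta$), Lemma~\ref{lem_dyn}, and Corollary~\ref{compatible_cor}, exactly as you do. Your extra remarks (padding the palette to $[\Delta]$, non-completeness from $\chi(G)\le\Delta/4$, and the even-$m$ case via Theorems~\ref{miseven} and~\ref{meven}) are harmless additions; the paper simply assumes $m$ odd throughout that section.
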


\end{document}